\documentclass[11pt,a4paper,leqno]{article}
\usepackage[latin1]{inputenc}
\usepackage[T1]{fontenc}
\usepackage[english]{babel}
\usepackage{graphicx}
\usepackage{amsfonts,amssymb,amsbsy,latexsym,a4wide,xspace}
\usepackage{amsmath,delarray,enumerate,calc,amsthm}
\usepackage{bbm}
\usepackage{mathrsfs}
\usepackage{txfonts}
\usepackage{paralist}

\usepackage{authblk}

\usepackage{tikz}
\usetikzlibrary{matrix}

\usepackage{url}

\newtheorem{Th}{Theorem}[section]
\newtheorem{Prop}[Th]{Proposition}

\theoremstyle{definition}
\newtheorem{Remark}[Th]{Remark}

\newtheorem{Cor}[Th]{Corollary}

\newcommand{\supp}{\operatorname{supp}}

\newcommand{\cf}{{\mathcal F}}

\newcommand{\cD}{{\mathcal D}}

\newcommand{\cM}{{\mathcal M}}

\newcommand{\sB}{\mathscr{B}}

\newcommand{\N}{{\mathbbm N}}

\newcommand{\Z}{{\mathbbm Z}}
\newcommand{\A}{{\mathbbm A}}

\newcommand{\raz}{{\mathbbm 1}}

\newcommand{\lcm}{\operatorname{lcm}}

\newcommand{\beq}{\begin{equation}}

\newcommand{\eeq}{\end{equation}}

\renewcommand{\mod}{\text{ mod }}
\newtheorem {definition}{Definition}[section]
\newtheorem {lemma}{Lemma}[section]
\newtheorem{theorem}{Theorem}

\newtheorem {bemerkung}{Remark}[section]
\newtheorem{proposition}{Proposition}[section]
\newtheorem {corollary}{Corollary}[section]
\newtheorem{beispiel}{Example}[section]
\newtheorem{frage}{Question}[section]

\newenvironment{remark} {\begin{bemerkung} \normalfont }{\end{bemerkung}}
\newenvironment{example} {\begin{beispiel} \normalfont }{\end{beispiel}}

\author{Aurelia Dymek, Nazrul Haque and Stanis\l aw Kasjan}
\title{On automorphisms of $\mathscr{B}$-admissible and related subshifts}
\begin{document}
\maketitle
\begin{abstract}
    We adapt ideas of Kim and Roush \cite{KimRoush}, originally developed in the study of automorphisms of sofic subshifts, to obtain sufficient conditions under which a subshift has a huge automorphism group. We apply this approach to non-sofic subshifts defined by sets of multiples. In particular, we establish a dichotomy for the $\mathscr{B}$-admissible subshift: its automorphism group is either trivial or contains an embedded copy of the automorphism group of the full shift $\{0,1\}^{\mathbb Z}$. In the latter case, we say that the automorphism group is huge. We further show that the automorphism group of the hereditary closure of the $\mathscr{B}$-free subshift is huge whenever $\mathscr{B}\subset \mathbb N$ is infinite and contains no infinite pairwise coprime subset.
\end{abstract}

\section{Introduction}
\textbf{Subshifts} In this section we recall some concepts of the theory of symbolic dynamical systems, see \cite{LMa} for more information. We work over the smallest nontrivial alphabet, $\{0,1\}$. A subshift is a closed subset $X\subseteq \{0,1\}^{\Z}$ that is invariant under the left shift
\[
\sigma\colon \{0,1\}^{\Z}\to \{0,1\}^{\Z}, \qquad (\sigma x)[n]=x[n+1],
\]
where $x=(x[n])_{n\in\Z}$. For any $n,m\in\Z$ such that $n<m$ we will write $x[n,m]$ for $(x[k])_{k=n}^m$ and call it a \emph{block} or a \emph{word}. By the \emph{length} of the block $x[n,m]$ we will mean $m-n+1$ and denote it by $|x[n,m]|$. Every subshift can be described by a collection of forbidden blocks. An important class is formed by shifts of finite type (SFTs), for which this collection is finite. Taking factors of SFTs yields the class of sofic shifts.

In this paper we concentrate on hereditary closures of $\mathscr{B}$-free shifts and $\mathscr{B}$-admissible subshifts.
For any $\mathscr{B}\subset\N$, the set of $\mathscr{B}$-free numbers is the complement of the set $\cM_{\mathscr{B}}:=\bigcup_{b\in\mathscr{B}}b\Z$ of multiples of the elements of $\mathscr{B}$. The set of $\mathscr{B}$-free numbers is denoted by $\cf_{\mathscr{B}}$ and its characterisctic function by $\eta_{\mathscr{B}}$. The orbit closure of $\eta_{\mathscr{B}}$ is called the $\mathscr{B}$-free subshift and is denoted by $X_{\eta_{\mathscr{B}}}$.

A subshift $X\subset\{0,1\}^\Z$ is called \emph{hereditary} if \[x \in X \text{ and } z\preccurlyeq x \implies z \in X,\] where $``\preccurlyeq"$ means \textit{coordinatewise less than or equal to}.
For a subshift $X$, the \emph{hereditary closure} $\widetilde{X}$ of $X$ is the smallest hereditary subshift containing $X$. Equivalently, we have
\[\widetilde{X}=\{y\in\{0,1\}^\Z \colon \exists_{x\in X} \ y\preccurlyeq x\}.\]
It is known that the $\mathscr{B}$-free subshift is not necessarily hereditary, that is, $X_{\eta_{\mathscr{B}}}\varsubsetneq \widetilde{X_{\eta_{\mathscr{B}}}}$ in general, see \cite{DKKL}.

A set $A\subset\Z$ is called $\mathscr{B}$-admissible if, for every $b\in\mathscr{B}$, one has $|A \bmod b|<b$. If $\mathscr{B}=\{b\}$, we will write $b$-admissible instead of $\{b\}$-admissible. A sequence $x\in\{0,1\}^{\Z}$ is called $\mathscr{B}$-admissible if, its support, denoted by $\supp x$, is $\mathscr{B}$-admissible, where $\supp x=\{n\in\Z \ : \ x[n]\neq0\}$. The set of all $\mathscr{B}$-admissible sequences forms a subshift, called the \emph{$\mathscr{B}$-admissible subshift}, which we denote by $\A_{\mathscr{B}}$\footnote{The $\mathscr{B}$-admissible subshift is also denoted by $X_{\mathscr{B}}$, see \cite{DKKL}.}. If $\mathscr{B}=\emptyset$, then $\A_{\mathscr{B}}=\{0,1\}^{\Z}$ is the \emph{full shift}.

Since one may replace $\mathscr{B}$ by a set with the same set of multiples and assume it is primitive, that is $b\nmid b'$ for distinct $b,b'\in\mathscr{B}$, we henceforth assume that $\mathscr{B}$ is primitive. The study of sets of multiples and $\mathscr{B}$-free numbers has a long history, see \cite{MR1414678}. More recently, subshifts associated with $\mathscr{B}$-free numbers have been studied intensively; see, for example, \cite{DKKL,KKL} and the references therein. 

\textbf{Automorphism group} \emph{The automorphism group} $Aut(X)$ of a subshift $X$ consists of all homeomorphisms of $X$ that commute with the shift $\sigma$, with composition as the group operation. \footnote{The group $Aut(X)$ is the centralizer of $\sigma$ in the group of all homeomorphisms from $X$ to $X$.} It always contains the subgroup $\{\sigma^n\colon n\in\Z\}$. If this is the whole group, then $Aut(X)$ is called \emph{trivial}. By the Curtis--Hedlund--Lyndon Theorem \cite{Hed}, every automorphism is a block code, and hence $Aut(X)$ is countable.
By the {\em radius} of an automorphism $F\in Aut(\{0,1\}^{\Z})$ we mean the minimal number $N$ such that $F$ is defined by a block code $f\colon\{0,1\}^{[-N,N]}\rightarrow\{0,1\}$ \cite[Theorem 3.4]{Hed}. When we consider a map $F$, we usually denote by $f\colon \{0,1\}^{[-N,N]}\rightarrow\{0,1\}$ a block code defining $F$. Observe that if $N$ is the radius of an automorphism $F$ then the radius of the composition $\sigma^kF$ is less than or equal to $N+|k|$.

The following are our main results.  In Section \ref{sec:hereditary}, we show:
 \begin{theorem}\label{thm:main2}
 For any infinite $\mathscr{B}\subset\N\setminus\{1\}$ not containing an infinite pairwise coprime subset, $Aut(\{0,1\}^{\Z})$ embeds into $Aut(\widetilde{X_{\eta_{\mathscr{B}}}})$.
 \end{theorem}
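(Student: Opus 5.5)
The plan is to produce a \emph{marker}-based embedding in the spirit of Kim and Roush. Inside $\widetilde{X_{\eta_{\mathscr{B}}}}$ we find a family of arbitrarily long "free" windows. Any $\{0,1\}$-pattern may be written into such a window while the point stays in the subshift. Each window is bracketed by recognizable markers that are never created or destroyed. An automorphism $F\in Aut(\{0,1\}^{\Z})$ of radius $N$ is then transported by applying $F$ to the content of the windows, with fixed-boundary or periodic conventions at the window edges. Everything outside the windows is left unchanged. The map $F\mapsto \Phi_F$ must be an injective homomorphism, so the windows have to be cyclic, or the encoding must otherwise make $\Phi_{FG}=\Phi_F\Phi_G$ exact. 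I would follow Kim--Roush: encode a bi-infinite $\{0,1\}$-sequence along a periodic skeleton, so that on an invariant subsystem conjugate to the full shift (a "lifted" copy) the automorphism acts as $F$. Elsewhere it acts as the identity, unless a consistent local reading is available.

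\textbf{Step 1: combinatorial input from the hypothesis.} Since $\mathscr{B}$ is infinite and contains no infinite pairwise coprime subset, a standard argument (Ramsey/Dickson-type) yields an integer $d>1$ and infinitely many $b\in\mathscr{B}$ with $d\mid b$. Refined appropriately, it gives infinitely many $b$ sharing a common factor. Consider the elements of $\mathscr{B}$ divisible by $d$; they have density that can be controlled. From this I want to show the following. For every $L$ there is a finite admissible configuration of $\eta_{\mathscr{B}}$ whose support contains, at positions in an arithmetic progression of some fixed difference $p$, a long run of ones. Since the subshift is hereditary, any subpattern of these ones is allowed. More usefully, I want an infinite sequence $x\in X_{\eta_{\mathscr{B}}}$ such that $\supp x\supseteq p\Z$ for some $p$. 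Then every $y$ with $\supp y\subseteq p\Z$ lies in $\widetilde{X_{\eta_{\mathscr{B}}}}$. Obtaining this needs the Chinese remainder theorem on finitely many elements of $\mathscr{B}$. It also needs a limit argument, which relies on the existence of infinitely many $b$ not coprime to a fixed modulus. This is where the hypothesis is used: with an infinite pairwise coprime subset, progressions get killed (the Erd\H{o}s-type behaviour).

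\textbf{Step 2: the embedding.} Given the full copy $Y=\{y:\supp y\subseteq p\Z\}\cong\{0,1\}^{\Z}$ (via $y\mapsto (y[pn])_n$), define $\Phi_F$ on all of $\widetilde{X_{\eta_{\mathscr{B}}}}$. Consider a maximal stretch where $x$ looks locally like an element of $Y$, meaning $x$ vanishes off a coset $a+p\Z$ over a window of length $\gg pN$. On such a stretch, apply $F$ to the coset subsequence, but only at coordinates at distance $\ge pN$ from the stretch boundary; elsewhere $\Phi_F$ is the identity. To keep this invertible and multiplicative, I would instead use Kim--Roush's trick. Choose a marker word $M$ that is forbidden inside $Y$-type stretches. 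Restrict the action to configurations where the coset content between consecutive markers is unconstrained, and act on it by a conjugated \emph{marker automorphism} built from $F$ via periodic wraparound. Alternatively, embed $Aut(\{0,1\}^{\Z})$ into $Aut$ of a full shift on a larger alphabet using "free" blocks, following the paper's general sufficient condition.

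\textbf{Main obstacle.} The hardest point is boundary handling: $\Phi_F$ must map $\widetilde{X_{\eta_{\mathscr{B}}}}$ into itself, be invertible with inverse $\Phi_{F^{-1}}$, and compose correctly. Points that only partially look like $Y$ near window edges could have ones pushed into forbidden residues. Heredity helps, because $\Phi_F$ may only turn ones into zeros or move within the coset $a+p\Z$, and every subset of that coset is allowed. I would first try defining $\Phi_F$ to act only on fully bi-infinite $Y$-type stretches and on regions far from their ends. At the ends I would use Step 1's freedom to ensure the window is $p$-periodic-compatible. Injectivity of $F\mapsto\Phi_F$ is immediate from the restriction to $Y$.
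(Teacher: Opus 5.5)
\textbf{There is a genuine gap in Step~2.} It sits exactly where you locate the main obstacle, and heredity does not close it.

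Knowing that $\raz_A\in\widetilde{X_{\eta_{\mathscr{B}}}}$ for every $A\subseteq a+p\Z$ says nothing about writing ones into a window of a \emph{given} point. Which positions of the window are free depends on the alignment of the shifts of $\eta_{\mathscr{B}}$ that dominate that point. That alignment can be forced by ones lying arbitrarily far outside the window. Heredity only lets you delete ones, while automorphisms of the full shift (the bit flip, marker automorphisms) create them.

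Here is a concrete example. Take $\mathscr{B}=\{4\}\cup\{2p: p \text{ an odd prime}\}$, which satisfies the hypotheses. Then $\cf_{\mathscr{B}}=(1+2\Z)\cup\{\pm 2\}$, and $X_{\eta_{\mathscr{B}}}$ consists of the shifts of $\eta_{\mathscr{B}}$ together with $\raz_{2\Z}$ and $\raz_{1+2\Z}$. For large $T$, the points $\raz_{\{-2\}\cup\{2T+1,2T+3,\ldots\}}$ and $\raz_{\{-2\}\cup\{2T,2T+2,\ldots\}}$ both lie in $\widetilde{X_{\eta_{\mathscr{B}}}}$ and agree on $(-\infty,2T-1]$. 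Adding three ones at even positions of $[0,2T-1]$ to the first point takes it out of the subshift. So does adding three ones at odd positions to the second.

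A sliding block code of radius much smaller than $T$ sees the same pattern near $0$ in both points. It therefore cannot write three ones on one coset of $2\Z$ near $0$ without leaving the subshift in one of the two cases. So ``$Y$-looking stretches'' are not places where $F$ may act. Your marker variant only postulates that the content between markers is unconstrained, and that is the whole difficulty.

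\textbf{The missing idea is a locally checkable certificate of freeness.} This is the exchangeability condition (b) of Proposition~\ref{prop:hereditaryperiodic}.

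The paper obtains it from the correct form of your Step~1. By the DKKL characterization quoted in Section~\ref{sec:hereditary}, the hypothesis means $\mathscr{B}\subseteq\cM_{\mathscr{Q}}$ for a finite primitive $\mathscr{Q}$. Your intermediate statement (infinitely many $b$ divisible by some $d>1$) does not give this. Your own target $\supp x\supseteq a+p\Z$ already requires $\gcd(p,b)>1$ for \emph{every} $b\in\mathscr{B}$.

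Normalize $\mathscr{Q}$ so that each $q$ equals $\gcd(b_q,\lcm(\mathscr{Q}))$ for some $b_q\in\mathscr{B}$, and set $K=\max_q b_q/q$ and $M=(K+1)\lcm(\mathscr{Q})$. Lemma~\ref{le1} then shows that $\eta_{\mathscr{Q}}[1,M]\preccurlyeq\eta_{\mathscr{B}}[n+1,n+M]$ forces $\lcm(\mathscr{Q})\mid n$. Consequently, an occurrence of $B=\eta_{\mathscr{Q}}[1,M]$ in any point pins down the alignment of every dominating shift of $\eta_{\mathscr{B}}$. The positions $M+1$ and $2M-1$ counted from its start are then $\mathscr{B}$-free in every context (Proposition~\ref{pro4}). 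This is what makes the blocks $D_{\varepsilon,\delta}=B\varepsilon 0^{M-3}\delta 0B$ mutually exchangeable, used with the marker $C=10^{3M-1}$.

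\textbf{A secondary point: transporting $F$ needs the two-track bucket passing of Kim--Roush.} Fixed boundary conventions on windows do not yield a homomorphism. A single-track periodic wrap-around on windows of unbounded length is not continuous, because the new symbol at one end of a window depends on the symbols at the other end. The paper stores two bits per block and folds them as $\alpha_0\ldots\alpha_r\beta_r\ldots\beta_0$, so that the wrap-around is local at both ends.
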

 In \cite{Keller2} Keller proved that for any taut $\mathscr{B}$ the $\mathscr{B}$-free subshift is hereditary if and only if $\mathscr{B}$ contains an infinite pairwise coprime subset. Moreover, he showed that for any taut $\mathscr{B}$ containing an infinite pairwise coprime subset the automorphism group of $\mathscr{B}$-free subshift is trivial, see \cite{LRS}. Because of that Theorem \ref{thm:main2} provides a dichotomy:
 \begin{corollary}
For any taut $\mathscr{B}\subset\N\setminus\{1\}$ either $Aut(\{0,1\}^{\Z})$ embeds into $Aut(\widetilde{X_{\eta_{\mathscr{B}}}})$ or $Aut(\widetilde{X_{\eta_{\mathscr{B}}}})$ is trivial.
\end{corollary}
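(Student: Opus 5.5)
The corollary follows from a case split on whether the taut set $\mathscr{B}$ contains an infinite pairwise coprime subset, with each case handled by a result already stated.

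\emph{Case 1: $\mathscr{B}$ contains an infinite pairwise coprime subset.} By Keller's result for taut $\mathscr{B}$ \cite{Keller2}, the subshift $X_{\eta_{\mathscr{B}}}$ is hereditary. Hence $\widetilde{X_{\eta_{\mathscr{B}}}}=X_{\eta_{\mathscr{B}}}$, since the hereditary closure of a hereditary subshift is the subshift itself. The second result quoted above (see \cite{LRS}) says that for such $\mathscr{B}$ the automorphism group of $X_{\eta_{\mathscr{B}}}$ is trivial. Therefore $Aut(\widetilde{X_{\eta_{\mathscr{B}}}})$ is trivial.

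\emph{Case 2: $\mathscr{B}$ contains no infinite pairwise coprime subset.} Split further according to whether $\mathscr{B}$ is infinite.
\begin{itemize}
\item If $\mathscr{B}$ is infinite, Theorem \ref{thm:main2} applies directly, because $1\notin\mathscr{B}$ by assumption. It gives an embedding of $Aut(\{0,1\}^{\Z})$ into $Aut(\widetilde{X_{\eta_{\mathscr{B}}}})$.
\item If $\mathscr{B}$ is finite, Theorem \ref{thm:main2} is not available, so a separate argument is needed. This is the step I expect to be the main obstacle.
\end{itemize}

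For finite $\mathscr{B}$, the sequence $\eta_{\mathscr{B}}$ is periodic with period $\lcm\mathscr{B}$. So $X_{\eta_{\mathscr{B}}}$ is a finite orbit, and $\widetilde{X_{\eta_{\mathscr{B}}}}$ is the hereditary closure of a periodic point.

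That closure is an SFT. A sequence lies in it exactly when its support is contained in some shift of $\supp\eta_{\mathscr{B}}$, and this is a finite-memory condition: one must choose one of the $\lcm\mathscr{B}$ phases consistently. The SFT has positive entropy as soon as $\eta_{\mathscr{B}}\neq 0$, and $\eta_{\mathscr{B}}\neq 0$ because $1\notin\mathscr{B}$ forces $1\in\cf_{\mathscr{B}}$.

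I would then show this SFT contains a mixing component and invoke the Kim--Roush embedding theorem \cite{KimRoush} for mixing SFTs of positive entropy. If the SFT is only irreducible with some period $p$, I would pass to the mixing power $\sigma^{p}$ on one cyclic class. I would then use the standard fact that, in that situation, $Aut(\{0,1\}^{\Z})$ still embeds, by building automorphisms that act in a marker-based way on the cyclic classes.

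Alternatively, I would take the route the paper itself seems to suggest: verify directly that the sufficient conditions the paper develops for huge automorphism groups also hold for this periodic hereditary closure. I would check this by exhibiting the markers explicitly, for instance long zero blocks, which separate freely chosen subwords of the support pattern. Either route gives the embedding in the finite case, which completes the dichotomy.
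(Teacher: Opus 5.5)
Your case split is the paper's. For taut $\mathscr{B}$ with an infinite pairwise coprime subset, Keller's results give $\widetilde{X_{\eta_{\mathscr{B}}}}=X_{\eta_{\mathscr{B}}}$ with trivial automorphism group. Otherwise Theorem~\ref{thm:main2} gives the embedding.

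You are right that Theorem~\ref{thm:main2} is stated only for infinite $\mathscr{B}$. However, your main argument for finite $\mathscr{B}$ rests on a false claim: the hereditary closure of a periodic orbit is in general \emph{not} an SFT. For $\mathscr{B}=\{3\}$ the closure equals $\A_{\{3\}}$, the set of sequences whose support misses a residue class mod $3$. The point with support $\{0,3N+1,6N+2\}$ is not in it. Yet every window of length at most $6N+2$ contains at most two of its $1$'s and is allowed, so no finite set of forbidden words defines this shift. The ``consistent choice of phase'' you describe must persist across arbitrarily long runs of $0$'s, which is exactly unbounded memory. The shift is sofic but not of finite type, and it is not even mixing: $11w11$ is allowed only when $|w|\equiv 1 \bmod 3$. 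So neither the Kim--Roush theorem for mixing SFTs nor your cyclic-class reduction applies as stated.

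Your alternative route points the right way but is only sketched, and the sketch misses what exchangeability depends on. Arbitrary subwords of the support pattern separated by zero runs need not be exchangeable. The rest of the point may force a phase under which the modified positions are $0$'s of every shift of $\eta_{\mathscr{B}}$ dominating it. The paper avoids this by flanking the variable positions with a phase-fixing block $B=\eta_{\mathscr{Q}}[1,M]$, which is $q$-full for every $q\in\mathscr{Q}$ (Lemma~\ref{le1}, Proposition~\ref{pro4}).

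The repair is immediate. Proposition~\ref{prop:taut_her} assumes only $\mathscr{B}\subseteq\mathcal{M}_{\mathscr{Q}}$ for a finite primitive $\mathscr{Q}$ and never uses that $\mathscr{B}$ is infinite. For finite nonempty primitive $\mathscr{B}$, take $\mathscr{Q}=\mathscr{B}$; then (\ref{eq:e1}) holds with $b_q=q$, $K=1$ and $M=2\lcm(\mathscr{B})$. Combine this with Proposition~\ref{prop:hereditaryperiodic}. This is how the paper covers finite $\mathscr{B}$; see the last remark of Subsection~\ref{sec:onlytrivial}. For $\mathscr{B}=\emptyset$ the closure is the full shift, so there is nothing to prove. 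Alternatively, keep your first route but replace ``SFT'' by ``uncountable sofic shift'' and use the result of \cite{Sal} quoted in Remark~\ref{rem:sofic}.
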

In the case of $\mathscr{B}$-admissible subshifts we deliver the following characterization of  non-triviality of $Aut(\A_{\mathscr{B}})$:
 \begin{theorem}\label{thm:main}
Assume that $\emptyset\neq \mathscr{B}\subset \N\setminus\{1\}$ is primitive. The following conditions are equivalent.
\begin{enumerate}
\item[(a)] There exists a nontrivial automorphism of $\A_{\mathscr{B}}$.
\item[(b)] The group of the automorphisms of the full shift $\{0,1\}^{\Z}$ embeds into that of  $\A_{\mathscr{B}}$.
\item[(c)] There exists a finite set $S\subseteq \mathscr{B}$ and an $S$-tuple  ${\bf r}=(r_b)_{b\in S}\in \Z^S$ of integers such that the set
 $$
K_{{\bf r}}:=\bigcup_{b\in S}(r_b+b\Z)
$$
is minimal  among all sets of the form $\bigcup_{b\in S}(r'_b+b\Z)$, where $(r'_b)_{b\in S}\in \Z^S$, and the set $\Z\setminus K_{\bf r}$ is
$\mathscr{B}$-admissible.\footnote{The set $K_{{\bf r}}$ satisfying the conditions in (c) is called an $S$-contour in  Definition \ref{def:contour}.}
\end{enumerate}
\end{theorem}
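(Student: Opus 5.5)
The plan is to prove the cycle (b)$\Rightarrow$(a)$\Rightarrow$(c)$\Rightarrow$(b).

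\emph{(b)$\Rightarrow$(a).} This is immediate. $Aut(\{0,1\}^{\Z})$ is not virtually cyclic; for instance, it contains infinite direct sums of finite groups. So no copy of it can sit inside $\{\sigma^n\}\cong\Z$.

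\emph{(a)$\Rightarrow$(c).} Let $F$ be a nontrivial automorphism of radius $N$, given by a block code $f$.
\begin{enumerate}
\item First I would show that $F$ maps the zero sequence $0^\infty$ to itself. This holds because $0^\infty$ is the unique fixed point whose orbit closure is a point lying in every nonempty hereditary subshift; more simply, it is the unique fixed point of $\sigma$ in $\A_{\mathscr{B}}$, since $1^\infty$ is not admissible because $\mathscr{B}\neq\emptyset$.
\item Next, $\A_{\mathscr{B}}$ is hereditary, and by compactness its maximal elements (under $\preccurlyeq$) are exactly the sequences whose support is a maximal $\mathscr{B}$-admissible set.
\item Suppose that for every finite $S\subseteq\mathscr{B}$ no $S$-contour has $\mathscr{B}$-admissible complement. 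I would then argue that admissible configurations are ``locally rigid'': every finite admissible pattern extends in many ways and is sparse enough that $F$ acts as a shift. Concretely, consider a sequence with a single $1$. Then $F(\delta_0)$ has finite support, and invertibility forces $F(\delta_0)=\delta_k$. Propagating this through patterns built inductively, by adding $1$'s far apart, gives $F=\sigma^k$ on a dense set.
\item The contrapositive runs as follows. Nontriviality forces the existence of two admissible words $u\neq v$ in a common context that $F$ must exchange. Here $u$ is a maximal filled window which blocks, for each $b$ in a finite $S$ (the $b\le 2N+1$ relevant to the window), exactly one residue class $r_b$. Maximality of the window yields minimality of $K_{\bf r}$, and the fact that $u$ extends periodically to a point of $\A_{\mathscr{B}}$ yields admissibility of $\Z\setminus K_{\bf r}$.
\end{enumerate}

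\emph{(c)$\Rightarrow$(b).} Let $P=\lcm(S)$ and let $K=K_{\bf r}$ be a contour. Consider the periodic point $z=\raz_{\Z\setminus K}\in\A_{\mathscr{B}}$. Minimality of $K$ means that no union of residue classes mod $b\in S$ strictly inside $K$ covers everything needed. Following Kim--Roush, I would build a ``marker'' structure in the spirit of a Kim--Roush embedding criterion. Fix a long block $w=z[0,mP-1]$ of a period of $z$, with one designated free position $p\in\Z\setminus K$ in each period. A point of $\A_{\mathscr{B}}$ in which $w$ occurs is, near that occurrence, forced by admissibility to be $\preccurlyeq$ a translate of $z$ aligned with the same residues. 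This is where minimality of $K$ and the primitivity of $\mathscr{B}$ enter: a long run agreeing with $z$ pins down the blocked residues mod every $b\in S$.

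Given $G\in Aut(\{0,1\}^{\Z})$, define $\Phi(G)$ as follows. On each maximal stretch where $x$ is $\preccurlyeq z$ up to translation and agrees with $z$ off the designated positions $p+P\Z$, read the symbols at $p+P\Z$ as a $\{0,1\}$-sequence and apply $G$, working with bi-infinite stretches or stretches marked by full copies at the ends. Leave $x$ untouched elsewhere. The result stays $\preccurlyeq$ a translate of $z$ on that stretch, so it remains $\mathscr{B}$-admissible because $\Z\setminus K$ is. Then $\Phi$ is a group homomorphism and is injective, as seen by testing on $z$-like points.

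\emph{Main obstacle.} The hardest part is making $\Phi(G)$ well defined and continuous on finite stretches, including boundary effects. Following Kim--Roush, I would handle this by restricting to the standard generating-type argument: first embed a subgroup that acts only on stretches flanked by unmodified marker blocks, then use the known fact that $Aut(\{0,1\}^{\Z})$ embeds in the automorphism group of any mixing SFT (or in its subgroup of marker automorphisms) to recover all of $Aut(\{0,1\}^{\Z})$.
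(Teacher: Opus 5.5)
Your (b)$\Rightarrow$(a) is fine. Your (c)$\Rightarrow$(b) follows the paper in outline: minimality of $K_{\bf r}$, together with $S$-admissibility and $\lcm(S)$-periodicity, forces any point containing a full period block of $\raz_{\Z\setminus K_{\bf r}}$ to have its \emph{whole} support in a translate of $\Z\setminus K_{\bf r}$, and then Kim--Roush bucket passing applies. Two cautions on that direction. First, your units must be recognisable independently of their data bits; if a period has a single $1$ and it is your free position, alignment is lost. This is why the paper uses $D_{\varepsilon,\delta}=B_{\bf r}\varepsilon0\cdots0\delta0\cdots0B_{\bf r}$ and $C=10^{\ell-1}$, doubling the period when needed. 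Second, the paper handles finite stretches by rolling the data into periodic two-track sequences acted on by $F$ and $F^T$ (Proposition~\ref{prop:hereditaryperiodic}), not by an appeal to mixing SFTs.

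The genuine gap is in (a)$\Rightarrow$(c).
\begin{itemize}
\item In step 3 you claim that invertibility forces $F(\raz_{\{0\}})=\raz_{\{k\}}$. Your justification never uses the absence of contours, and without that hypothesis the claim is false. For $\mathscr{B}=\{2\}$, take the full-shift involution that flips $x[n]$ exactly when $x[n-3,n-1]=001$ and $x[n+1,n+2]=00$, and apply it to the restriction of $x$ to each residue class mod $2$. This gives an automorphism of $\A_{\{2\}}$ with $F(\raz_{\{0\}})=\raz_{\{0,2\}}$.
\item Even granting $F(\raz_{\{0\}})=\raz_{\{k\}}$, adding $1$'s far apart only determines $F$ on points whose $1$'s are more than $2N$ apart. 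These points are not dense: none lies in the nonempty cylinder of points containing $101$ at a fixed place.
\item Step 4 asserts that $\Z\setminus K_{\bf r}$ is $\mathscr{B}$-admissible because ``$u$ extends periodically'', with no argument tying this to $F$. Admissibility with respect to the infinitely many $b\in\mathscr{B}\setminus S$ is exactly what must be extracted from the automorphism.
\end{itemize}

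What is missing is the monotonicity of the code, proved by a witness construction.
\begin{enumerate}
\item Normalise: $f(0^{2N+1})=0$, and by injectivity and a shift, $f(e_0)=1$.
\item Suppose $f(B)=1$ for an admissible $B$ with $0\notin A=\supp B$. Take $S\supseteq\mathscr{B}\cap[1,N]$ and $K_{\bf r}$ minimal among the unions $\bigcup_{b\in S}(r_b+b\Z)$ disjoint from $A$; such a $K_{\bf r}$ is then minimal outright.
\item If $\Z\setminus K_{\bf r}$ were not admissible, take a minimal finite $L\subseteq\Z\setminus K_{\bf r}$ with $A\cup L$ not admissible. Since $A\cup L$ is $S$-admissible, it fails for some $b_0\notin S$, so $b_0>N$. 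Then $A'=(A\cup L)\setminus b_0\Z\supseteq A$ is admissible and meets every nonzero class mod $b_0$.
\item Lemma~\ref{lem:repeat} and Corollary~\ref{cor:witness} (translates by $\ell_i$ eventually divisible by every $b$) give $n_a\equiv a\pmod{b_0}$, $a\in A'$, far from each other and from $A$, with $A\cup\{n_a:a\in A'\}$ admissible.
\item The image of its indicator has $1$'s at $0$ (where $f$ sees $B$) and at every $n_a$ (where $f$ sees $e_0$). So it meets every class mod $b_0$, a contradiction.
\end{enumerate}
Hence, if no contour exists, $f(B)=0$ whenever $0\notin\supp B$, i.e.\ $F(x)\preccurlyeq x$ for all $x$. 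The same argument for $F^{-1}$ gives $F=\mathrm{id}$ after the shift normalisation.
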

Theorem \ref{thm:main} allows us to obtain the corresponding dichotomy for automorphism group of $\A_\sB$:
 \begin{corollary}\label{thm:main1}
 For any $\mathscr{B}\subset\N\setminus\{1\}$ either $Aut(\{0,1\}^{\Z})$ embeds into $Aut(\A_{\mathscr{B}})$ or $Aut(\A_{\mathscr{B}})$ is trivial.
 \end{corollary}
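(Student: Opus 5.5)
The corollary should follow from Theorem \ref{thm:main} once we reduce to the case it covers, namely $\emptyset\neq\mathscr{B}$ primitive. I would treat the two excluded cases separately and then apply the theorem.

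First, the reduction. As noted in the introduction, replacing $\mathscr{B}$ by its primitive part (the set of elements of $\mathscr{B}$ that are minimal with respect to divisibility) does not change $\A_{\mathscr{B}}$. Indeed, suppose $b\mid b'$ and $A$ is $b$-admissible. Pick $r$ with $r\notin A \bmod b$. Then no residue $r'\equiv r \pmod b$ lies in $A\bmod b'$. Hence $A$ is automatically $b'$-admissible, so multiples of elements of $\mathscr{B}$ impose no extra conditions. Every element of $\mathscr{B}\subset\N\setminus\{1\}$ lies above a minimal one, so the primitive part $\mathscr{B}'$ is nonempty whenever $\mathscr{B}$ is, and $\A_{\mathscr{B}}=\A_{\mathscr{B}'}$. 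Therefore $Aut(\A_{\mathscr{B}})=Aut(\A_{\mathscr{B}'})$.

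Second, the empty case. If $\mathscr{B}=\emptyset$, then $\A_{\mathscr{B}}=\{0,1\}^{\Z}$ and the identity map embeds $Aut(\{0,1\}^{\Z})$ into $Aut(\A_{\mathscr{B}})$. So the first alternative holds.

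Third, the main case. Assume $\mathscr{B}'\neq\emptyset$ is primitive with $1\notin\mathscr{B}'$. Theorem \ref{thm:main} gives (a)$\Leftrightarrow$(b). If $Aut(\A_{\mathscr{B}'})$ is not trivial, then by definition there is an automorphism that is not a power of $\sigma$, so (a) holds. Then (b) holds, which is exactly the embedding $Aut(\{0,1\}^{\Z})\hookrightarrow Aut(\A_{\mathscr{B}})$. Otherwise the group is trivial, which is the second alternative.

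The only point needing care is the meaning of ``nontrivial automorphism'' in (a). It must mean an automorphism not in $\{\sigma^n\}$, matching the paper's definition of a trivial group, rather than merely one different from the identity. The corollary's dichotomy depends on reading (a) this way. Beyond that the argument is bookkeeping; all the substance lies in Theorem \ref{thm:main}.
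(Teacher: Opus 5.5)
Your proposal is correct and matches the paper's route: the paper likewise derives the dichotomy directly from the equivalence (a)$\Leftrightarrow$(b) of Theorem \ref{thm:main}. Your explicit reduction to the primitive part and your handling of $\mathscr{B}=\emptyset$ spell out what the paper covers through its standing primitivity assumption and its remark that $\A_{\emptyset}$ is the full shift. Your reading of ``nontrivial'' in (a) as ``not a power of $\sigma$'' is also the one the paper's proof of (a)$\Rightarrow$(c) uses, since that proof shows that every automorphism is $\sigma^{k}$ composed with the identity.
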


 We provide also handy sufficient conditions for $\A_{\mathscr{B}}$ to have trivial automorphism group in Subsection \ref{sec:onlytrivial}.

 Let us recall that in \cite{Me}, Mentzen proved that the automorphism group of every Erd\H{o}s $\mathscr{B}$-free subshift is trivial, that is, whenever $\mathscr{B}$ is infinite, pairwise coprime, and satisfies $\sum_{b\in\mathscr{B}}1/b<\infty$.  This result was extended by Keller to taut sets $\mathscr{B}$ that contain an infinite pairwise coprime subset, see \cite{Keller2,LRS}. Theorem \ref{thm:main} can be seen as a generalization of that result. Note that $\widetilde{X_{\eta_{\mathscr{B}}}}=\A_{\mathscr{B}}$ in the Erd\H{o}s case \cite{DKKL}. This class of $\mathscr{B}$-free subshifts stands in sharp contrast to the class of $\mathscr{B}$-free Toeplitz shifts, for which $X_{\eta_\mathscr{B}}\neq\widetilde{X_{\eta_\mathscr{B}}}\neq\A_\mathscr{B}$, see \cite{KLA}. In the Toeplitz case there are examples with trivial automorphism group \cite{Dymek} as well as examples with nontrivial automorphism group \cite{DKK}.

 Results showing that the automorphism group of a subshift is large, are known mainly in the case of SFT or sofic subshifts, see e.g. \cite{BoLiRu}, \cite{KimRoush}, \cite{Sal}. The classes of subshifts considered in this paper is, in a sense, in sharp contrast to the class of sofic shifts; see Corollary~\ref{not_sofic} and Proposition \ref{lem:whensofic}.

However, to prove our results, we adapt the idea of Kim and Roush from \cite{KimRoush}. In this way, we obtain sufficient conditions on the language of a subshift $X$ that guarantee the existence of an embedding of $Aut(\{0,1\}^\Z)$ into $Aut(X)$, see Proposition~\ref{prop:hereditaryperiodic}. Then we verify that these conditions are satisfied for $\widetilde{X_{\eta_{\mathscr{B}}}}$ whenever $\mathscr{B}$ contains no infinite pairwise coprime subset, see Proposition~\ref{prop:taut_her}. To treat the case of $\A_\mathscr{B}$, we introduce certain unions of residue classes modulo elements of finite subsets of $\mathscr{B}$, called contours, see Definition~\ref{def:contour}. These contours constitute the main tool in the proof of Theorem~\ref{thm:main}, which is contained in Section \ref{sec:admissible}.
One of our conditions in Proposition~\ref{prop:hereditaryperiodic} is exchangeability of blocks. The notion of exchangeable blocks, to the best of our knowledge, does not exist in the literature.

Many results on automorphism groups of SFTs are obtained by manipulating of marker automorphisms, introduced in \cite{Hed} as a class of finite-order automorphisms of the full shift. Using marker automorphisms, Hedlund proved that the automorphism group of the full shift contains isomorphic copies of every finite group as well as the free group on two generators. In the same spirit, Kim and Roush \cite{KimRoush} showed that the automorphism group of the full shift embeds into the automorphism group of any mixing SFT, using markers to encode data words that serve as symbols of a full shift.

\section{A sufficient condition for huge automorphism group}
Given an automorphism $F\colon\{0,1\}^{\Z}\rightarrow \{0,1\}^{\Z}$ of the full shift, given by a block code $f:\{0,1\}^{[-N,N]}\rightarrow \{0,1\}$ we set $F^T:=\Delta\circ F\circ\Delta$, where $\Delta(x)[n]=x[-n]$ for every $x\in\{0,1\}^{\Z}$ and $n\in\Z$. Observe that $F^T$ is given by the block code $f^T$ defined by
$$
f^T(\alpha_{-N}\alpha_{-N+1}\ldots\alpha_{0}\ldots\alpha_{N-1}\alpha_{N})=f(\alpha_{N}\alpha_{N-1}\ldots\alpha_{0}\ldots\alpha_{-N+1}\alpha_{-N}).
$$
Moreover, $F^T$ is invertible with the inverse $\Delta\circ F^{-1}\circ\Delta$.

\begin{definition}\label{def:exchangeable}
Let $Y\subseteq\{0,1\}^{\Z}$ (not necessarily a subshift). We say that the blocks $C_1,C_2,\ldots$  of the same length $\ell$ are {\em mutually exchangeable in } $Y$, if for every $y\in Y$, if $y[n+1,n+\ell]=C_i$ for some $n\in\Z$ and $i$, then, for every $j$, we have $y'\in Y$, where $y'[k]=y[k]$ for $k\in\Z\setminus[n+1,n+\ell]$ and $y'[n+1,n+\ell]=C_{j}$.
\end{definition}
\begin{proposition}\label{prop:hereditaryperiodic}
Assume that $Y$ is a subshift in $\{0,1\}^{\Z}$ and $D_{0,0}$, $D_{0,1}$, $D_{1,0}$, $D_{1,1}$ $C$ are five blocks of the same length\footnote{We assume that  the length of $C$ equals the length of $D_{\alpha,\beta}$ only to simplify the technical aspects. Obviously, it is important that the blocks $D_{\alpha,\beta}$ have all equal lengths, as they are exchangeable, but $C$ can have arbitrary length.} $\ell$  appearing in $Y$ and satisfy the following conditions:
 \begin{enumerate}
 \item[(a)] $D_{0,0}$, $D_{0,1}$, $D_{1,0}$, $D_{1,1}$, $C$ are pairwise different.\label{conda}
 \item[(b)] The blocks $D_{0,0}$, $D_{0,1}$, $D_{1,0}$, $D_{1,1}$ are mutually exchangeable in $Y$.\label{condb}
 \item[(c)] Assume that $\varepsilon,\delta\in\{0,1\}$, $0\le m\le\ell$. If $C=(D_{\varepsilon,\delta}C)[m+1,m+\ell]$ then $m=\ell$ and if $C=(CD_{\varepsilon,\delta})[m+1,m+\ell]$ then $m=0$.\label{condc}
 \item[(d)] $Y$ contains the periodic element $\ldots CD_{0,0}CD_{0,0}\ldots CD_{0,0}C\ldots $.\label{condd}
 \end{enumerate}
 Then there exists an injective group homomorphism $\Phi:Aut(\{0,1\}^{\Z})\rightarrow Aut(Y)$.
\end{proposition}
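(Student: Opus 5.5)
Following Kim and Roush, we use $C$ as a marker and the four blocks $D_{\alpha,\beta}$ as data symbols that each carry two bits. The point of pairs of bits is to make room for the reversal: a given automorphism $F$ acts on the first bits and $F^T$ (or rather its reversed-reading analogue) acts on the second bits. This lets segments of data running in either orientation be treated consistently. The simplest version is to act on the first coordinate $\alpha$ only. It remains to see whether a bounded chain of data creates a problem.

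For $y\in Y$, define a \emph{$C$-chain} to be a maximal pattern of the form $\ldots CD_{\alpha_i,\beta_i}CD_{\alpha_{i+1},\beta_{i+1}}C\ldots$ occurring in $y$, indexed by $i$ in an interval $I\subseteq\Z$ (possibly finite, half-infinite or bi-infinite). By condition (c), occurrences of $C$ next to a data block cannot overlap it except trivially, so the chain decomposition is well defined and is locally recognizable.

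\textbf{Step 1: defining $\Phi(F)$.} Given $F$ with block code $f$ of radius $N$, we construct $\Phi(F)$ as follows.
\begin{itemize}
\item On a bi-infinite chain, replace the sequence $(\alpha_i)$ by $F((\alpha_i))$ and leave the $\beta_i$ unchanged.
\item On a finite chain, indexed by $I=[p,q]$, use the marker trick. Extend $(\alpha_i)$ periodically with period $q-p+1$, apply $F$, and restrict back to $I$.
\item On a half-infinite chain, extend by reflection. (Here the paired coordinate gives the freedom to make this consistent: one could instead use the standard construction of treating the $\beta$-row as the reversed continuation of the $\alpha$-row, forming a loop $\alpha_p\ldots\alpha_q\beta_q\ldots\beta_p$ on which $F$ acts cyclically. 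I would adopt this loop construction uniformly, since it makes finite chains be acted on by a genuine conjugate of $F$.)
\end{itemize}
Symbols outside chains are left unchanged.

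\textbf{Step 2: checking that $\Phi(F)$ is a well-defined automorphism of $Y$.} There are three things to verify.
\begin{itemize}
\item \emph{Block map and shift-commuting.} Each new symbol $\alpha_i'$ depends only on a window of radius at most $N$ chain-steps, that is, $N(2\ell)$ coordinates, together with knowledge of where the chain ends. On short loops the periodic rule reads the whole loop, which is still local. So $\Phi(F)$ is given by a sliding block code and commutes with $\sigma$.
\item \emph{Lands in $Y$.} We only replace data blocks by other data blocks, one at a time. Exchangeability in condition (b) makes each single replacement stay in $Y$. Since $Y$ is closed, the limits of finitely many replacements also lie in $Y$.
\item \emph{Chain structure preserved.} Condition (a) ensures the replacement blocks are not mistaken for $C$. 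Condition (c) prevents new $C$'s from being created across boundaries, because a new occurrence of $C$ would have to sit at offset $m$ inside some $D_{\varepsilon,\delta}C$ or $CD_{\varepsilon,\delta}$, which (c) forbids.
\end{itemize}

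\textbf{Step 3: homomorphism and injectivity.} On each chain we have $\Phi(FG)=\Phi(F)\Phi(G)$, because on loops and on bi-infinite rows we are literally applying $F$ and $G$ in turn. Hence $\Phi(F)$ is invertible with inverse $\Phi(F^{-1})$. For injectivity, apply condition (d): the periodic point $\ldots CD_{0,0}CD_{0,0}C\ldots$ lies in $Y$. Changing its data blocks arbitrarily, using exchangeability and closedness, shows that every bi-infinite chain with arbitrary $(\alpha_i)\in\{0,1\}^{\Z}$ occurs in $Y$. On these points $\Phi(F)$ acts as $F$, so $F\neq\mathrm{id}$ forces $\Phi(F)\neq\mathrm{id}$.

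\textbf{Main obstacle.} The main obstacle is Step 2: making the treatment of finite and half-infinite chains consistent and local. Half-infinite chains need the loop or reversal convention, and this is where $F^T$ enters. The natural approach is to let the $\beta$-row carry the reversed continuation and to check, via the block code $f^T$, that the rule near a chain end still depends only on boundedly many coordinates. The other delicate point is verifying with condition (c) that chain boundaries are unambiguously detected after the replacement.
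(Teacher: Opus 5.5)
There is a genuine gap. The rules you commit to in Step~1 do not give a continuous map, and this is exactly where the locality claim in Step~2 breaks.

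(i) Extending $(\alpha_i)_{i\in[p,q]}$ periodically with period $q-p+1$ makes $\alpha'_p$ depend on $\alpha_{q-N+1},\ldots,\alpha_q$. These sit roughly $2\ell(q-p)$ coordinates away. Chains can be arbitrarily long, so no bounded window determines the output, and ``knowledge of where the chain ends'' is not available locally when the far end is far. Short loops are harmless; long ones are the problem.

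(ii) If ``extend by reflection'' means reflecting the $\alpha$-row itself at the end of a half-infinite chain, the rule is local but not a homomorphism. For $F=\sigma$ the first symbol $\alpha_p$ is simply discarded, so $\Phi(\sigma)$ is not injective and $\Phi(\sigma^{-1})\Phi(\sigma)\neq\mathrm{id}$.

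(iii) Suppose you instead use the loop $\alpha_p\ldots\alpha_q\beta_q\ldots\beta_p$ for finite chains, which is the paper's finite case. Then deep inside a long finite chain the $\beta$-row is transformed too: for $F=\sigma$ the loop rotates and $\beta'_i=\beta_{i-1}$ away from the ends. Your bi-infinite rule leaves $\beta$ unchanged. A long finite chain and a bi-infinite chain that agree on a huge window therefore receive different outputs in the middle, so $\Phi(F)$ is discontinuous.

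The underlying point is that you cannot act on $\alpha$ alone. The only local source for the input $f$ needs beyond a chain end is the $\beta$-row. Once the $\beta$-row is used at the ends, it must be acted on everywhere, by the reversed code.

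What is missing is a single convention covering all chain types, which the paper sets up. To each maximal active interval attach $(\alpha,\beta)\in(\{0,1\}^{\Z})^2$ as follows:
\begin{itemize}
\item two independent rows for a bi-infinite chain;
\item rows glued at the end via $\sigma^{\pm1}\beta=\Delta\alpha$ for a half-infinite chain;
\item the periodic loop with $\beta=\sigma\Delta\alpha$ for a finite chain.
\end{itemize}
In every case replace $(\alpha,\beta)$ by $(F(\alpha),F^T(\beta))$. More than $N$ data blocks from every end, the rule is then literally $f$ on the $\alpha$-row and $f^T$ on the $\beta$-row, whatever the chain type. Within $N$ blocks of an end, that end is visible. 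A case analysis (each end near or far) shows that $F'(x)[n]$ depends only on $x[n-(2N+3)\ell,n+(2N+3)\ell]$.

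With this definition the homomorphism property follows from $(FG)^T=F^TG^T$. The rest of your Steps~2--3 then go through as you describe: exchangeability plus closedness for landing in $Y$, conditions (a) and (c) for uniqueness of maximal chains, and injectivity via (d) and (b). You correctly named this consistency check as the main obstacle, but the proposal postpones it, and the definitions it does fix would make it fail.
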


\begin{proof}
The proof is essentially contained in \cite{KimRoush} and is based on the "bucket-passing" idea described there.
By an {\em active interval} of   $x\in Y$ we mean a block, appearing in $x$, being of the form $$CD_{\alpha_{0}, \beta_{0}}CD_{\alpha_{1}, \beta_{1}}C \ldots CD_{\alpha_{j}, \beta_{j}}C$$
or infinite to the right: $$CD_{\alpha_{0}, \beta_{0}}CD_{\alpha_{1}, \beta_{1}}C \ldots$$ or
interval as a block together with its position in $x$. We need a more precise definition of an active interval. For this purpose,  for $a\in\Z\cup\{-\infty\}, b\in \Z\cup\{+\infty\}$ and a sequence $x\in\{0,1\}^\Z$ let us denote by $x(a,b)$  the sequence of the terms of $x$ that are indexed by the integers $i$ satisfying $a<i<b$. An active interval of $x$ is a triple $(a,b,B)$, where $a\in\Z\cup\{-\infty\}, b\in \Z\cup\{+\infty\}$, $B\in\{0,1\}^{\Z}$ and:
\begin{itemize}
\item the sequence $B$ is of the form\footnote{By the exchangeability condition (b) and the fact that $Y$ is closed, it follows from (d) that every such sequence belongs to $Y$.}\begin{equation}\label{eq:sequence}
\ldots CD_{\alpha_{-2}, \beta_{-2}}CD_{\alpha_{-1}, \beta_{-1}}CD_{\alpha_{0}, \beta_{0}}CD_{\alpha_{1}, \beta_{1}}CD_{\alpha_{2}, \beta_{2}}C\ldots,
\end{equation}
where $\alpha,\beta\in\{0,1\}^{\Z}$.
\item $x(a,b)=B(a,b)$.
\item If $b\in \Z$ and $a=-\infty$, then
$$
B(a,b)=\ldots CD_{\alpha_{-2}, \beta_{-2}}CD_{\alpha_{-1}, \beta_{-1}}CD_{\alpha_{0}, \beta_{0}}C
$$
and $(\alpha_1,\alpha_2,\ldots)=(\beta_0,\beta_{-1},\beta_{-2},\ldots)$, $(\beta_1,\beta_2,\ldots)=(\alpha_0,\alpha_{-1},\alpha_{-2},\ldots)$. In this case the sequences $\alpha=\alpha(a,b,B)$ and $\beta=\beta(a,b,B)$ are defined uniquely by $(a,b,B)$ (we understand that $\alpha_0$ and $\beta_0$ are the $0$th positions) and $\sigma(\beta(a,b,B))=\Delta(\alpha(a,b,B))$.
\item If $a\in \Z$ and $b=+\infty$, then
$$
B(a,b)=CD_{\alpha_{0}, \beta_{0}}CD_{\alpha_{1}, \beta_{1}}CD_{\alpha_{2}, \beta_{2}}C\ldots
$$
and $(\ldots,\alpha_{-2},\alpha_{-1})=(\ldots,\beta_1,\beta_0)$, $(\ldots,\beta_{-2},\beta_{-1})=(\ldots,\alpha_1,\alpha_0)$. Again, the sequences $\alpha=\alpha(a,b,B)$ and $\beta=\beta(a,b,B)$ are defined uniquely by $(a,b,B)$  and $\sigma^{-1}(\beta(a,b,B))=\Delta(\alpha(a,b,B))$.
\item If $a,b\in \Z$, then
$$
B(a,b)=CD_{\alpha_{0}, \beta_{0}}CD_{\alpha_{1}, \beta_{1}}CD_{\alpha_{2}, \beta_{2}}C\ldots D_{\alpha_{r}, \beta_{r}}C,
$$
where $(2r+3)\ell=b-a-1$ and $\alpha=\alpha(a,b,B)$ is the periodic sequence
$$
\ldots \underline{\alpha_0}\alpha_1\ldots\alpha_r\beta_r\beta_{r-1}\ldots\beta_1\beta_0\alpha_0\alpha_1\ldots.
$$
  and $\beta(a,b,B)=\sigma\circ\Delta(\alpha(a,b,B))$. The underlined character indicates the 0th position in the sequence.
\end{itemize}
The sequences $\alpha$, $\beta$ carry information on the block $x(a,b)=B(a,b)$; the part of $B$ outside the interval $(a,b)$ is invisible in $x$, however it is convenient (although a little counter-intuitive) to keep the sequences $\alpha, \beta$ infinite in all cases, that is why we "roll them up" in the cases $(a,b)\neq (-\infty,+\infty)$.

We shall use the concept of a maximal active interval. The maximality of an active interval is understood in an intuitive way. In precise terms,  an active interval $(a,b,B)$ of $x$ is maximal, if  for every active interval $(a',b',B')$ of $x$ such that $a'\le a$ and $b\le b'$, we have $(a',b',B')=(a,b,B)$.

Let $F$ be an automorphism of $\{0,1\}^{\Z}$.  We associate to $F$ a map $F'$ on $Y$, which maps a sequence of the form (\ref{eq:sequence})
 (i.e. a sequence being a two-sided infinite active interval), to the sequence
$$
\ldots CD_{F(\alpha)_{-2}, F^T(\beta)_{-2}}CD_{F(\alpha)_{-1}, F^T(\beta)_{-1}}CD_{F(\alpha)_{0}, F^T(\beta)_{0}}CD_{F(\alpha)_{1}, F^T(\beta)_{1}}CD_{F(\alpha)_{2}, F^T(\beta)_{2}}C\ldots.
$$
Then we use "bucket-passing construction" (see \cite{KimRoush}) to extend $F'$ on the whole $Y$.

Every maximal active interval $(a,b,B)$ with $(a,b)\neq(-\infty,+\infty)$ of $x$ is replaced by an active interval $(a,b,B')$, where
$$
\alpha(a,b,B')=F(\alpha(a,b,B))\;\text{and}\; \beta(a,b,B')=F^T(\beta(a,b,B)).$$

For example, a maximal active interval
$$CD_{\alpha_{0}, \beta_{0}}CD_{\alpha_{1}, \beta_{1}}C \ldots CD_{\alpha_{j}, \beta_{j}}C$$ is replaced by
$$CD_{\alpha'_{0}, \beta'_{0}}CD_{\alpha'_{1}, \beta'_{1}}C \ldots CD_{\alpha'_{j}, \beta'_{j}}C,$$
where
$$
\ldots \beta'_0\underline{\alpha'_0}\alpha'_1\ldots\alpha'_j\beta'_j\beta'_{j-1}\ldots\beta'_0\alpha'_0\alpha'_1\ldots=F(\ldots \beta_0\underline{\alpha_0}\alpha_1\ldots\alpha_j\beta_j\beta_{j-1}\ldots\beta_0\alpha_0\alpha_1\ldots).
$$
Observe that the l.h.s sequence is periodic, as the sequence at the r.h.s is, with the same period.


We claim that the sequence $F'(y)$ is well defined for every $y\in Y$. Observe first that it follows by the conditions (a) and (c) that if two maximal active intervals of $x\in Y$ overlap along a block of length at least $\ell$, then they must be equal, as their union is again an active interval.\footnote{In other words,
if two maximal active intervals of $x\in Y$ overlap along a block $C'$ then $|C'|<|C|$, comp. the overlap condition in section 2.2 of \cite{Ya}.}
The above construction may change the $n$th ($n\in\Z$) term of $y$ if and only if $n$ belongs to some block of type $D_{\varepsilon,\delta}$ contained in a maximal active interval of $y$. Thanks to the previous remark, this maximal active interval is uniquely determined by $n$ (and $y$), so the claim follows.

It is clear from the construction that $F'$ commutes with the shift $\sigma$.
The continuity of $F'$ follows since the value of the $n$-the term of $F'(x)$ depends only on the interval $x[n-(2N+3)\ell,n+(2N+3)\ell]$, where $N$ is a radius of $F$.
In order to prove that,
assume that
\begin{equation}\label{eq:xiy}
x[n-(2N+3)\ell,n+(2N+3)\ell]=y[n-(2N+3)\ell,n+(2N+3)\ell]
\end{equation} for some $x,y\in Y$.
We claim that $F'(x)[n]=F'(y)[n]$. By saying that $n$ {\em belongs to a maximal active interval of} $x$  we shall mean that there exists a maximal active interval $(a,b,B)$ of $x$ such that $a<n<b$. If this is the case then, by (\ref{eq:xiy}) we conclude that
$$
x[m+1,m+3\ell]=CD_{\varepsilon,\delta}C= y[m+1,m+3\ell]
$$
for some $\varepsilon,\delta\in\{0,1\}$ and $m$ satisfies $m+1\le n\le m+3\ell$. It follows that $n$ belongs to a maximal active interval of $y$. The same is true with the roles of $x$ and $y$ interchanged. It follows that if  $n$ does not belong to any active interval of $x$ then $F'(x)[n]=x[n]=y[n]=F'(y)[n]$. So assume otherwise, let $(a,b,B)$ and $(a',b',B')$ be maximal active intervals of $x$ and $y$, respectively, such that $a<n<b$ and $a'<n<b'$. Note that, due  to (\ref{eq:xiy}), if $b\le n+(2N+1)\ell+1$, then $b=b'$. Similarly, if $a\ge n-(2N+1)\ell-1$ then $a=a'$. There are several cases to consider.

1) $b=b'\le n+(2N+1)\ell+1$ and $a=a'\ge n-(2N+1)\ell-1$. Then by (\ref{eq:xiy}), we have $B=B'$ and $F'(x)[n]=F'(y)[n]$.

2) $b=b'\le n+(2N+1)\ell+1$ and $a,a'< n-(2N+1)\ell-1$. In this case, again by (\ref{eq:xiy}), the restrictions of $B$ and $B'$ to the interval $[n-(2N+1)\ell,n+(2N+1)\ell]$. If $n$ "lies under $C$", that is $b-(2r+1)\ell\le n <\ell-2r\ell$ for some $r\in\N_0$, then $F'(x)[n]=x[n]=y[n]=F'(y)[n]$. Otherwise, when $b-2r\ell\le n<b-(2r-1)\ell$ for some $r\in\N$ ($r\le N$ in this case), the restrictions of $B$ and $B'$ to the interval $[n-(2N+1)\ell,n+(2N+1)\ell]$ are equal to
$$
U'D_{\alpha_{-r-N},\beta_{-r-N}}CD_{\alpha_{-r-N+1},\beta_{-r-N+1}}C\ldots CD_{\alpha_{-r},\beta_{-r}}C\ldots CD_{\alpha_{0},\beta_{0}}CU''
$$
for some blocks $U',U''$. Then
 we have $F'(x)[n]=F'(y)[n]$, as both values depend only on the sequence
$$
\alpha_{-r-N},\alpha_{-r-N+1},\ldots,\alpha_{-r},\ldots,\alpha_0,\beta_0,\beta_{-1},\ldots,\beta_{r+1-N}.
$$

\bigskip

\begin{tikzpicture}
\draw[thick] (0,0) -- (10,0);

\draw[line width=4pt] (0.5,0) -- (1.5,0);
\draw[line width=4pt] (2.5,0) -- (3.5,0);
\draw[line width=4pt] (4.5,0) -- (5.5,0);
\draw[line width=4pt] (6.5,0) -- (7.5,0);

\foreach \x in {0,0.5,1.5,2.5,3.5,4.5,5.5,6.5,7,7.5,8.5,8.6,10}
  \draw (\x,0.2) -- (\x,-0.2);
\node[below] at (1,-0.3) {$D_{\alpha_{-3},\beta_{-3}}$};
\node[below] at (3,-0.3) {$D_{\alpha_{-2},\beta_{-2}}$};
\node[below] at (5,-0.3) {$D_{\alpha_{-1},\beta_{-1}}$};
\node[below] at (7,-0.3) {$D_{\alpha_0,\beta_0}$};
\node[below] at (2,-0.3) {$C$};
\node[below] at (4,-0.3) {$C$};
\node[below] at (6,-0.3) {$C$};
\node[below] at (8,-0.3) {$C$};
\node[above] at (0,0.3) {$n-(2N+1)\ell$};
\node[above] at (10,0.3) {$n+(2N+1)\ell$};
\node[above] at (8.6,0.3) {$b$};

\node[above] at (7,0.3) {$n$};

\end{tikzpicture}

\begin{center}
The case $N=2$, $r=1$.
\end{center}

\bigskip

3) The case $b,b'> n+(2N+1)\ell+1$ and $a=a'\ge n-(2N+1)\ell-1$ is  treated as case 2).

4) $b,b'> n+(2N+1)\ell+1$ and $a,a'< n-(2N+1)\ell-1$. In this case, either $n$ lies under $C$, and then $F'(x)[n]=x[n]=y[n]=F'(y)[n]$ or the restrictions of $B$, $B'$ to the interval $[n-(2N+1)\ell,n+(2N+1)\ell]$ coincide with the block of the form
$$
U'D_{\alpha_{-N},\beta_{-N}}CD_{\alpha_{-N+1},\beta_{-N+1}}C\ldots CD_{\alpha_{0},\beta_{0}}C\ldots CD_{\alpha_{N},\beta_{N}}CU''
$$
for some blocks $U',U''$. In this case $F'(x)[n]=F'(y)[n]$ as both values depend only on the sequences
$$
\alpha_{-N},\ldots,\alpha_0,\ldots,\alpha_N\; \text{ and }\; \beta_N\ldots,\beta_0\ldots \beta_{-N}.
$$
The claim follows.

By the condition (b), $F'(y)\in Y$ for every $y\in Y$.

It follows easily from the definition that the map $F\mapsto F'$ is a group homomorphism.
We finish the proof by showing that this is injective. Assume that $F$ is not the identity and let ${\alpha}\in\{0,1\}^{\Z}$ be such that $F({\alpha})\neq {\alpha}$. Then $F'(y)\neq y$, where $y$ is the sequence (\ref{eq:sequence}) with $\beta$ arbitrary. It remains to notice that $y\in Y$ thanks to the conditions (d) and (b).
\end{proof}
\begin{remark}
The condition (c) in Proposition \ref{prop:hereditaryperiodic} is slightly weaker than the condition that $C$ does not overlap with itself nontrivially. However, in what follows we  apply the proposition with $C$ satisfying the latter condition.
If $Y$ is an uncountable sofic subshift, then $C, D_{0,0}, D_{0,1}, D_{1,0}, D_{1,1}$ satisfying the conditions (a)-(d) can be constructed with the help of synchronizing words. This seems to be rather folklore, compare Lemma 1 in \cite{Sal}.
\end{remark}
\section{On automorphisms of the hereditary closure of the $\mathscr{B}$-free shift}\label{sec:hereditary}

The classical \textit{Dirichlet theorem on primes} in arithmetic progressions has the following corollary:


\begin{Cor}\label{Cor_Dirichlet}
     Let $\cD=\{d_1, d_2, \ldots, d_{\ell}\}\subseteq \mathbb{N}\setminus \{1\}$ be any finite primitive set. Then for any $r\in \mathbb{N}$
     \begin{equation}\label{eq:e-1}
         r+d_i \Z \nsubseteq \bigcup_{k\neq i} d_k \Z.
     \end{equation}
\end{Cor}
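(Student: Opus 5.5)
We have to show that the progression $r+d_i\Z$ contains an element $n$ with $d_k\nmid n$ for every $k\neq i$. The plan is to find such an $n$ of the form $n=g\,p$, where $g=\gcd(r,d_i)$ and $p$ is a prime supplied by Dirichlet's theorem.

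Write $r=g r'$ and $d_i=g d_i'$ with $\gcd(r',d_i')=1$. Let $M:=\prod_{k} d_k$. Dirichlet's theorem applies to the progression $r'+d_i'\Z$, since $\gcd(r',d_i')=1$, so it contains infinitely many primes. Choose such a prime $p>M$. Then $n:=gp$ satisfies $n\equiv g r'=r \pmod{g d_i'}$, so $n\in r+d_i\Z$.

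It remains to check that $d_k\nmid gp$ for each $k\neq i$. Suppose instead that $d_k\mid gp$. Since $p>M\ge d_k$, the prime $p$ does not divide $d_k$, so $\gcd(d_k,p)=1$. Hence $d_k\mid g$, and since $g\mid d_i$ this gives $d_k\mid d_i$. This contradicts the primitivity of $\cD$, because $k\neq i$. Therefore $n\notin\bigcup_{k\neq i}d_k\Z$, which proves \eqref{eq:e-1}.

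The only delicate point is the choice of $g=\gcd(r,d_i)$. A naive attempt would look for a prime directly in $r+d_i\Z$, which fails when $\gcd(r,d_i)>1$. Factoring out $g$ reduces the problem to a coprime progression. The primitivity assumption then handles the common factor $g$, since $g$ divides $d_i$ and therefore no other $d_k$ can divide $g$. The hypothesis $d_i\neq 1$ plays no role beyond ensuring the sets are meaningful. If $\gcd(r',d_i')=1$ with $d_i'=1$, every integer lies in the progression, and Dirichlet's theorem reduces to the infinitude of primes, so the argument still applies.
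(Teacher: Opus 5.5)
Your proposal is correct and follows essentially the same route as the paper: factor out $g=\gcd(r,d_i)$, take a Dirichlet prime $p$ in the coprime progression $r'+d_i'\Z$ large enough that $\gcd(d_k,p)=1$, and conclude that $d_k\mid gp$ would force $d_k\mid g\mid d_i$, contradicting primitivity. The only cosmetic difference is that you take $p>\prod_k d_k$ where the paper takes $p>\max(\cD)$, and either bound suffices.
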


\begin{proof}
Let $\gcd(d_i, r)=g$. Then, we can write $r+ d_i \Z = g\cdot (r' + d_i^{'} \Z)$ where $r' = \frac{r}{g}$, $d_{i}^{'} = \frac{d_i}{g}$ and $\gcd (d_{i}^{'}, r')=1$. By 
Dirichlet theorem, $r'+d_{i}^{'}\Z$ contains infinitely many primes. Let us choose a prime $p >\max (\cD)$ so that $p \in r'+d_{i}^{'}\Z$. Observe that $x= g\cdot p \in r+d_i \Z$.

Assume for a contradiction that $r+ d_i \Z \subseteq \bigcup_{k\neq i} d_k \Z$. Then there exists $d_k \in \cD$ with $k\neq i$ such that $d_k \mid x=g \cdot p$. This implies that $d_k \mid \gcd(d_i, r)$ as $\gcd(d_k, p) = 1$, due to the choice of $p>\max (\cD)$. Now,  $d_k \mid d_i$, which is a contradiction, since $\cD$ is primitive. Hence, the claim follows.
\end{proof}

The above statement generalizes to the case of taut $\cD$, see Proposition 4.31 in \cite{DKKL}.

\begin{Prop}(\cite[Corollary 4.32]{DKKL})\label{pro1}
    Let $\cD\subseteq \mathbb{N}\setminus \{1\}$ be any finite primitive set. If $L= \lcm (\cD) $, then for all $L'\geq L$ and $d\in \cD$:
    \begin{equation*}
        \left|(\cf_{\cD} \cap [1, L'])\mod d\right| = d-1.
    \end{equation*}
\end{Prop}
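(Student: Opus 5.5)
We need to show that for every $d\in\cD$ and every residue $r\not\equiv 0 \pmod d$, some $\cD$-free integer in $[1,L']$ is congruent to $r$ modulo $d$. The class $0 \bmod d$ can never be hit, since every element of $d\Z$ is a multiple of $d\in\cD$. Hence hitting all nonzero residues gives exactly $d-1$ classes.

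The plan is to pass to residues modulo $L=\lcm(\cD)$. Being $\cD$-free depends only on the residue modulo $L$, because each $d_k$ divides $L$. Consequently $\cf_\cD$ is $L$-periodic. Since $[1,L]$ is a complete residue system modulo $L$ and $[1,L]\subseteq[1,L']$, it suffices to treat $L'=L$. For $L'\ge L$ the set $(\cf_{\cD}\cap[1,L'])\bmod d$ can only grow, and it is always contained in the nonzero classes modulo $d$.

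Fix $d=d_i$ and $r\in\{1,\ldots,d-1\}$. By Corollary \ref{Cor_Dirichlet} there is an integer $x\in r+d_i\Z$ with $x\notin\bigcup_{k\neq i}d_k\Z$. We also need $x\notin d_i\Z$, which holds because $r\not\equiv 0 \pmod{d_i}$. Therefore $x\in\cf_\cD$.

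Now let $x'\in[1,L]$ be the representative of $x$ modulo $L$. By periodicity, $x'\in\cf_\cD$. Since $d\mid L$, we have $x'\equiv x\equiv r \pmod d$. Thus every nonzero residue modulo $d$ is attained by an element of $\cf_\cD\cap[1,L]$, which completes the argument.

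There is no real obstacle here. The only point requiring care is that Corollary \ref{Cor_Dirichlet} excludes only the $d_k$ with $k\ne i$; membership in $d_i\Z$ must be ruled out separately by the choice $r\not\equiv0$. The corollary is stated for $r\in\N$, and our $r\ge 1$ satisfies this.
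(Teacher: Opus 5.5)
Your proof is correct and takes essentially the same route as the paper. Corollary \ref{Cor_Dirichlet}, together with $r\not\equiv 0 \pmod{d_i}$, gives a $\cD$-free integer in each nonzero class modulo $d_i$. The $L$-periodicity of $\cf_\cD$ then moves that integer into $[1,L]$, while the class of $0$ is excluded trivially.
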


\begin{proof}
Let $\cD= \{d_1, d_2, \ldots, d_m\}\subseteq \mathbb{N}\setminus\{1\}$ be primitive and fix $d_i \in D$. First note that $d_i\Z\cap\cf_{\cD}=\emptyset$, so , $|\cf_{\cD} \mod d_i|\leq d_i-1$ for all $d_i\in \cD$. Thanks to the primitivity of $\cD$, we have (\ref{eq:e-1}). Choose $r\in \Z$ such that $d_i \nmid r$. As $(r+d_i\Z)\cap d_i\Z=\emptyset$, it follows by (\ref{eq:e-1}) that $(r+d_i\Z)\cap \cf_{\cD}\neq\emptyset$. Hence
  \begin{equation}\label{eq:e01}
      |\cf_{D} \mod d_i| = d_i-1
  \end{equation}
for every $d_i\in D$. Finally, let $L= \lcm (\cD)$ and note that $\cf_{\cD}$ is $L$-periodic. Choose any $d\in \cD$ and $x\in \cf_{\cD}$. Then there exists $y\in \{1, 2, \ldots, L\}$ such that $x=L\cdot q +y$ for $q \in \mathbb{N}$. Thanks to the periodicity of $\cf_{\cD}$, we have $x\in \cf_{\cD}$ if and only if $y\in \cf_{\cD}$. Thus, for any $d_i \in {\cD}$, every non-zero residue modulo $d_i$ is represented in the set $\cf_{\cD} \cap [1, L]$. Hence the claim follows easily for any $L'\geq L$.
\end{proof}

\begin{Prop}\label{pro2}
    Assume that $\mathscr{B}\subseteq\mathcal{M}_{\mathscr{Q}}$, where $\mathscr{Q}\subset \N\setminus\{1\}$ is a finite primitive set. Then there exists a finite primitive set $\mathscr{Q}'\subseteq \mathbb{N}\setminus \{1\}$ such that $\mathscr{B}\subseteq \mathcal{M}_{\mathscr{Q}'}$ and \begin{equation}\label{eq:e1}
\forall_{q\in\mathscr{Q}'}\exists_{b\in\mathscr{B}}q=\gcd(b,\lcm(\mathscr{Q}')).
\end{equation}
\end{Prop}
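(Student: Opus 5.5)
We plan to argue by a descent procedure on a suitable finite measure of complexity of $\mathscr{Q}$. Start with $\mathscr{Q}_0:=\mathscr{Q}$ and set $L_0:=\lcm(\mathscr{Q}_0)$. For every $b\in\mathscr{B}$ put $g_b:=\gcd(b,L_0)$. Since $b\in\mathcal{M}_{\mathscr{Q}_0}$, some $q\in\mathscr{Q}_0$ divides $b$. As $q\mid L_0$, we get $q\mid g_b$, and therefore $b\in g_b\Z\subseteq q\Z$. The set $G_0:=\{g_b\colon b\in\mathscr{B}\}$ consists of divisors of $L_0$, so it is finite. None of its elements equals $1$, since each is divisible by some $q\ge 2$. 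Let $\mathscr{Q}_1$ be the set of minimal elements of $G_0$ with respect to divisibility. Then $\mathscr{Q}_1$ is finite and primitive, $\mathscr{Q}_1\subseteq\N\setminus\{1\}$, and $\mathscr{B}\subseteq\mathcal{M}_{G_0}=\mathcal{M}_{\mathscr{Q}_1}$. Moreover, each element of $\mathscr{Q}_1$ has the form $\gcd(b,L_0)$ for some $b\in\mathscr{B}$.

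The problem is that \eqref{eq:e1} requires $q=\gcd(b,\lcm(\mathscr{Q}_1))$, while we only know $q=\gcd(b,L_0)$. Put $L_1:=\lcm(\mathscr{Q}_1)$. Each element of $\mathscr{Q}_1$ divides $L_0$, so $L_1\mid L_0$. For $q=\gcd(b,L_0)\in\mathscr{Q}_1$ we have $q\mid L_1$, hence $q\mid\gcd(b,L_1)\mid\gcd(b,L_0)=q$. This gives equality: $q=\gcd(b,L_1)$ with the same $b$.

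So a single step already suffices, and \eqref{eq:e1} holds for $\mathscr{Q}':=\mathscr{Q}_1$. No iteration is needed. The only point requiring care is the divisibility chain $q\mid L_1\mid L_0$, which uses that $\mathscr{Q}_1\subseteq G_0$ consists of divisors of $L_0$. We also check that passing to minimal elements does not lose coverage: every element of $G_0$ is a multiple of a minimal one, so $\mathcal{M}_{G_0}=\mathcal{M}_{\mathscr{Q}_1}$. Finally, primitivity of $\mathscr{Q}_1$ holds by construction.

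I do not expect a real obstacle here. The main thing to get right is to choose $\mathscr{Q}'$ from the gcd's with $\lcm(\mathscr{Q})$ rather than with some unspecified modulus, and then to observe the sandwich $\gcd(b,L_1)\mid\gcd(b,L_0)$ together with $q\mid L_1$. This sandwich is what makes the construction self-consistent.
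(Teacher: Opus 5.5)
Your proof is correct and is essentially the paper's: the paper also takes $\mathscr{Q}'=\{\gcd(b,\lcm(\mathscr{Q}))\colon b\in\mathscr{B}\}$ and concludes with the same sandwich $q\mid\gcd(b_q,\lcm(\mathscr{Q}'))\mid\gcd(b_q,\lcm(\mathscr{Q}))=q$. Your extra step of passing to divisibility-minimal elements is actually needed, and the paper's proof omits it: the full set of gcd's need not be primitive (e.g.\ $\mathscr{Q}=\{2,3\}$ and $\mathscr{B}=\{10,42\}$ give $\{2,6\}$), and you correctly check that the sandwich still works after this reduction because $\lcm(\mathscr{Q}_1)\mid\lcm(\mathscr{Q})$.
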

\begin{proof}
    Let us define $\mathscr{Q}' = \{\gcd(b, \lcm(\mathscr{Q}))~:~b\in \mathscr{B} \}$. Take any $b \in \mathscr{B}$. Then $ \gcd(b, \lcm(\mathscr{Q})) \mid b$ and this implies that $\mathscr{B} \subseteq \mathcal{M}_{\mathscr{Q}'}$. Since $\mathscr{B}\subseteq \mathcal{M}_{\mathscr{Q}}$, for every $b\in \mathscr{B}$ there exists at least one $q\in \mathscr{Q}$ such that $q\mid b$. Also, $q\mid \lcm({\mathscr{Q})}$ for every $q\in \mathscr{Q}$. Thus, $q\mid \gcd (b, \lcm(\mathscr{Q}))$. This implies that $\gcd (b, \lcm(\mathscr{Q})) >1$, in other words, $\mathscr{Q}' \subseteq \mathbb{N}\setminus \{1\}$.

    Observe that $\mathscr{Q}'$ is finite. It remains to prove (\ref{eq:e1}). For every $q\in \mathscr{Q}'$, there is $b_q\in \mathscr{B}$ such that $\gcd (b_q, \lcm (\mathscr{Q})) = q$ by the definition of $\mathscr{Q}'$. Then $q \mid \gcd(b_q, \lcm(\mathscr{Q}'))$. Now, $\lcm (\mathscr{Q}') \mid \lcm {(\mathscr{Q})}$ and therefore $ \gcd (b_q, \lcm (\mathscr{Q}')) \mid \gcd (b_q, \lcm (\mathscr{Q})) = q$. So, $\gcd (b_q, \lcm (\mathscr{Q}')) = q$ for any $q\in \mathscr{Q}'$.

\end{proof}

\begin{Remark}\label{rmk1}
    By the above proposition, if $\mathscr{B} \subseteq \mathcal{M}_{\mathscr{Q}}$, then we can always assume that there exists a finite primitive set $\mathscr{Q}\subseteq \mathbb{N}\setminus \{1\}$ that satisfies the condition (\ref{eq:e1}). Note that $\mathscr{Q}$ with these properties is not uniquely determined by $\mathscr{B}$.
\end{Remark}

\begin{lemma}\label{le1}
    Assume that $\mathscr{B}\subseteq \mathcal{M}_{\mathscr{Q}}$, where $\mathscr{Q}\subset \N\setminus\{1\}$ is a finite primitive set having the property (\ref{eq:e1}). For each $q\in \mathscr{Q}$, choose $b_q \in \mathscr{B}$ such that $\gcd (b_q, \lcm(\mathscr{Q}))=q$  and define \begin{align}\label{eq:e2}
    K= \max_{q\in\mathscr{Q}} \frac{b_q}{q},~~M=(K+1)\lcm (\mathscr{Q}).
\end{align}
 Then for every $M'\geq M$ and every integer $n$ the following condition holds: if $\eta_{\mathscr{Q}}[1,M']\preccurlyeq \eta_{\mathscr{B}}[n+1,n+M']$, then $\eta_{\mathscr{Q}}[1,M']\preccurlyeq \eta_{\mathscr{Q}}[n+1,n+M']$ and $\lcm(\mathscr{Q})|n$.
\end{lemma}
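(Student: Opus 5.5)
The plan is to show first that $\lcm(\mathscr{Q})\mid n$. The first conclusion then follows for free: $\eta_{\mathscr{Q}}$ is $\lcm(\mathscr{Q})$-periodic, so $\eta_{\mathscr{Q}}[n+1,n+M']=\eta_{\mathscr{Q}}[1,M']$, which gives the required inequality (even with equality). Write $L=\lcm(\mathscr{Q})$. Since $L$ is the least common multiple of the elements of $\mathscr{Q}$, it suffices to prove $q\mid n$ for every $q\in\mathscr{Q}$. I will argue by contradiction: fix $q\in\mathscr{Q}$ with $q\nmid n$, and produce some $j\in[1,M']$ with $j\in\cf_{\mathscr{Q}}$ but $n+j\in\cM_{\mathscr{B}}$. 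Such a $j$ contradicts the hypothesis $\eta_{\mathscr{Q}}[1,M']\preccurlyeq\eta_{\mathscr{B}}[n+1,n+M']$.

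The witness will be a multiple-hit of $b_q$, i.e.\ we look for $j$ with $b_q\mid n+j$. Write $b_q=qk$, where $k=b_q/q\le K$. The condition $\gcd(b_q,L)=q$ gives $\gcd(k,L/q)=1$, and therefore $\lcm(b_q,L)=kL$.

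Membership $j\in\cf_{\mathscr{Q}}$ depends only on $j\bmod L$, so we need a residue $r$ modulo $L$ satisfying two conditions:
\begin{itemize}
\item $r\in \cf_{\mathscr{Q}}\bmod L$;
\item the system $j\equiv -n \pmod{b_q}$, $j\equiv r\pmod{L}$ is solvable.
\end{itemize}
Since $\gcd(b_q,L)=q$, the generalized Chinese remainder theorem says the system is solvable exactly when $r\equiv -n\pmod q$. Moreover, its solution is then unique modulo $kL$.

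The existence of such an $r$ is where Proposition~\ref{pro1} enters. The set $\cf_{\mathscr{Q}}$ meets every nonzero residue class modulo $q$; this is the statement $|\cf_{\mathscr{Q}}\bmod q|=q-1$ together with $q\Z\cap\cf_{\mathscr{Q}}=\emptyset$. Because $-n\not\equiv 0\pmod q$, we can pick $x\in\cf_{\mathscr{Q}}$ with $x\equiv -n\pmod q$ and put $r=x\bmod L$.

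Solving the system gives a class modulo $kL\le KL<M\le M'$, so it has a representative $j\in[1,kL]\subseteq[1,M']$. This $j$ satisfies $j\equiv x\pmod L$, hence $j\in\cf_{\mathscr{Q}}$ by $L$-periodicity. It also satisfies $b_q\mid n+j$, hence $\eta_{\mathscr{B}}[n+j]=0$ while $\eta_{\mathscr{Q}}[j]=1$. This is the desired contradiction.

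The only delicate step is the arithmetic compatibility in the CRT: we need the target residue $-n$ modulo $q$ to be attainable by a $\mathscr{Q}$-free number. This is exactly why both $q\nmid n$ and property (\ref{eq:e1}), through the choice of $b_q$ with $\gcd(b_q,L)=q$, are needed. Everything else (periodicity, bounding the solution by $KL$) is routine. The constant $M=(K+1)L$ in (\ref{eq:e2}) gives more room than the required bound $kL\le KL$.
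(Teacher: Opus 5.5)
Your proof is correct. It uses the same two ingredients as the paper, but applies them in the opposite order. The first ingredient is a congruence system modulo $b_q$ and $\lcm(\mathscr{Q})$ that has a small solution because $\gcd(b_q,\lcm(\mathscr{Q}))=q$. The second is that $\cf_{\mathscr{Q}}$ meets every nonzero residue class modulo $q$.

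The paper first proves the domination $\eta_{\mathscr{Q}}[1,M']\preccurlyeq\eta_{\mathscr{Q}}[n+1,n+M']$ pointwise. For a $\mathscr{Q}$-free $i\in[1,\lcm(\mathscr{Q})]$ with $q\mid i+n$, it solves $b_qx-\lcm(\mathscr{Q})y=i+n$ with $0\le y<b_q/q$. This gives a $\mathscr{Q}$-free $a=i+y\lcm(\mathscr{Q})<M$ with $b_q\mid a+n$, which is exactly your CRT step. Only afterwards does the paper invoke the truncated fullness statement of Proposition~\ref{pro1}, $|(\cf_{\mathscr{Q}}\cap[1,M'])\bmod q|=q-1$, to conclude that $q\mid n$.

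You instead use fullness first, in its untruncated form (\ref{eq:e01}), to choose the residue class that would be violated if $q\nmid n$. You then build the witness by CRT, so divisibility comes out directly. The domination then follows from $\lcm(\mathscr{Q})$-periodicity, in fact with equality.

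Your version is slightly shorter and needs only the global fullness statement, since the bound $j\le kL<M$ does the localization to $[1,M']$. The paper's two-step version records the intermediate domination statement separately, but that statement is subsumed as soon as $\lcm(\mathscr{Q})\mid n$ is known.
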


\begin{proof}

Fix $M'\geq M$ and $n\in \Z$ and assume that
\begin{equation}\label{eq:assumption}
\eta_{\mathscr{Q}}[1,M']\preccurlyeq \eta_{\mathscr{B}}[n+1,n+M'].
\end{equation}
  Take any $i \in \{1, 2, \ldots, \lcm(\mathscr{Q})\}$ such that $\eta_{\mathscr{Q}} [i] = 1$. Then, $\eta_{\mathscr{B}} [i+n] = 1$.  Assume that $\eta_{\mathscr{Q}}[i+n] = 0$. Then there exists $q\in\mathscr{Q}$ such that $q \mid i+n$, i.e. $\gcd(b_q, \lcm(\mathscr{Q})) \mid i+n$. Hence, the linear Diophantine equation
  \begin{align}
      b_q x - \lcm(\mathscr{Q})y = i+n
  \end{align}
has an integer solution $(x, y)$. Moreover, it  has a unique solution $(x, y)$ with  $ 0\leq y<\frac{b_q}{q}$. Define $a=i+y\lcm(\mathscr{Q})=b_qx - n$.
Since $\eta_{\mathscr Q}$ is $\lcm(\mathscr{Q})$-periodic and $i\in \mathcal{F}_{\mathscr{Q}}$,
\[
a\in i+\lcm(\mathscr{Q})\Z\subset\mathcal F_{\mathscr Q}.
\]
Moreover,
\[
1\le a< i+K\lcm(\mathscr{Q})\leq M\leq M'.
\]
Thus $a\in\mathcal F_{\mathscr Q}\cap[1,M']$, which, in view of the condition (\ref{eq:assumption}), implies
\[
a+n\in\mathcal F_{\mathscr B}\cap[n+1,n+M'].
\]
On the other hand, by construction $a+n=b_s x\in b_s\Z$, hence $a+n\notin\mathcal F_{\mathscr B}$.
This is a contradiction. This proves $\eta_{\mathscr{Q}}[i+n] = 1$ whenever $\eta_{\mathscr{Q}}[i] = 1$. Since $i$ is arbitrary and $\eta_{\mathscr{Q}}$ is $\lcm(\mathscr{Q})$-periodic, we conclude that $\eta_{\mathscr{Q}}[1,M']\preccurlyeq \eta_{\mathscr{Q}}[n+1,n+M'].$
It follows \begin{align}\label{eq:e4}
    \mathcal{F}_{\mathscr{Q}} \cap [1, M'] \subset \mathcal{F}_{\mathscr{Q}} \cap [n+1, n+M'] - n.
\end{align}
Thanks to the fact that $\lcm(\mathscr{Q}) < M$, by Proposition \ref{pro1}, we have
$\left|\supp~\eta_{\mathscr{Q}}[1, M']\mod q\right|=q-1$ for all $q\in \mathscr{Q}$. In other words, for any given integer $m$,
\begin{align}\label{eq:e5}
    \mathcal{F}_{\mathscr{Q}} \cap [1, M'] \cap (m+q\Z)=\emptyset~~\text{implies that}~q\mid m
\end{align}

 For every $q\in \mathscr{Q}$, we have $(q\Z - n) \cap \left[(\mathcal{F}_{\mathscr{Q}}\cap [n+1, n+M'])-n\right] = \emptyset.$ Therefore, from $(\ref{eq:e4})$, it follows that
 $(q\Z - n) \cap (\mathcal{F}_{\mathscr{Q}}\cap [1, M'])= \emptyset.$ From (\ref{eq:e5}), we have $q\mid n$ and this is true for any arbitrary $q\in \mathscr{Q}$. Hence, $\lcm (\mathscr{Q})\mid n$.

\end{proof}

We will use the following fact.

\begin{Prop}(\cite[Theorem 3.7]{DKKL})
A set $\mathscr{B}\subseteq \mathbb{N}\setminus\{1\}$ contains an infinite pairwise coprime subset if and only if $\mathscr{B} \nsubseteq \mathcal{M}_{\mathscr{Q}}$ for any finite set $\mathscr{Q}\subseteq \mathbb{N}\setminus \{1\}$.
\end{Prop}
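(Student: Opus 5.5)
The statement is an equivalence, so I would prove the two implications separately. Both are elementary, using only pigeonhole and prime factorisation, with no appeal to the results above.

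\emph{Necessity.} Suppose $\mathscr{B}$ contains an infinite pairwise coprime subset $\{c_1,c_2,\ldots\}$, and suppose for a contradiction that $\mathscr{B}\subseteq\mathcal{M}_{\mathscr{Q}}$ for some finite $\mathscr{Q}\subseteq\mathbb{N}\setminus\{1\}$. Then each $c_i$ is divisible by some $q_i\in\mathscr{Q}$. Since $\mathscr{Q}$ is finite, the pigeonhole principle gives one $q\in\mathscr{Q}$ with $q=q_i$ for infinitely many $i$, in particular for two indices $i\neq j$. Then $q\mid\gcd(c_i,c_j)=1$, which contradicts $q>1$.

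\emph{Sufficiency.} Suppose $\mathscr{B}\not\subseteq\mathcal{M}_{\mathscr{Q}}$ for every finite $\mathscr{Q}\subseteq\mathbb{N}\setminus\{1\}$. I would build pairwise coprime elements $b_1,b_2,\ldots\in\mathscr{B}$ inductively.
\begin{enumerate}
\item[(i)] Take $\mathscr{Q}=\emptyset$. Then $\mathcal{M}_{\emptyset}=\emptyset$, so the hypothesis says $\mathscr{B}\neq\emptyset$, and we pick any $b_1\in\mathscr{B}$.
\item[(ii)] Given pairwise coprime $b_1,\ldots,b_k\in\mathscr{B}$, let $\mathscr{Q}_k$ be the set of primes dividing $b_1b_2\cdots b_k$. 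This set is finite and contained in $\mathbb{N}\setminus\{1\}$.
\item[(iii)] By hypothesis there is $b_{k+1}\in\mathscr{B}\setminus\mathcal{M}_{\mathscr{Q}_k}$. This element has no prime factor in common with any $b_i$, $i\le k$, so it is coprime to each of them.
\item[(iv)] Every element of $\mathscr{B}$ is greater than $1$. Hence $\gcd(b_{k+1},b_i)=1<b_{k+1}$ forces $b_{k+1}\neq b_i$, so the new element is genuinely new.
\end{enumerate}
The resulting set $\{b_k:k\ge1\}$ is an infinite pairwise coprime subset of $\mathscr{B}$.

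The only step needing care is (ii): in the inductive step one must use the set of prime divisors, rather than the numbers $b_1,\ldots,b_k$ themselves, as the finite set $\mathscr{Q}$. Avoiding multiples of the $b_i$ alone would not guarantee coprimality, whereas avoiding multiples of every prime factor does. I do not anticipate any real obstacle beyond this.
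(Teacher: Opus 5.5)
Your proof is correct in both directions. The pigeonhole argument settles necessity. In the sufficiency part, taking $\mathscr{Q}_k$ to be the set of prime divisors of $b_1\cdots b_k$ is exactly what makes $b_{k+1}\notin\mathcal{M}_{\mathscr{Q}_k}$ equivalent to $b_{k+1}$ being coprime to every earlier $b_i$, and step (iv) correctly ensures that the selected elements are distinct.

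The paper gives no proof of this statement. It is quoted from \cite[Theorem~3.7]{DKKL}, and what is used right afterwards, in (\ref{aa}), is the direction ``no infinite pairwise coprime subset $\Rightarrow$ $\mathscr{B}\subseteq\mathcal{M}_{\mathscr{Q}}$ for some finite $\mathscr{Q}$''. So your argument is a self-contained elementary replacement for the citation rather than a variant of an argument in the text.

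Your route also yields slightly more than the statement as quoted. Read contrapositively, your greedy selection must get stuck at some finite stage $k$. This gives $\mathscr{B}\subseteq\mathcal{M}_{\mathscr{Q}_k}$ with $\mathscr{Q}_k$ a finite set of primes. Such a set is automatically primitive, so it already satisfies the hypothesis of Proposition~\ref{pro2}, which Section~\ref{sec:hereditary} applies next.
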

As we assumed that $\mathscr{B}$ does not contain any infinite pairwise coprime set, therefore by the above
theorem, there exists $\mathscr{Q}\subseteq \mathbb{N}\setminus \{1\}$ such that
\begin{align}\label{aa}
\mathscr{B} \subseteq \mathcal{M}_{\mathscr{Q}}
\end{align}
Thanks to Remark \ref{rmk1}, we can assume that $\mathscr{Q}$ satisfies the condition (\ref{eq:e1}).

\begin{Remark}\label{rmk2}
    For $\mathscr{Q}\subseteq \mathbb{N}\setminus \{1\}$, if $k \in \lcm (\mathscr{Q}) \Z$, then $k-1, k+1 \in \mathcal{F}_{\mathscr{Q}}$.
\end{Remark}
Let $M:= (K+1)\lcm(\mathscr{Q})$  where $K$ is defined in (\ref{eq:e2}) above. Thanks to Proposition \ref{pro1}, we have  $\mid\supp ~\eta_{\mathscr{Q}}[1, M]\mod q\mid = q-1$ for all $q\in \mathscr{Q}$. Define a finite block
\begin{equation}\label{eq:B}
    B:= \eta_{\mathscr{Q}} [1, M].
\end{equation} 
By the construction of $B$, we have $B[1]=1$ and $B[M]=0$ . As $\lcm(\mathscr{Q})\mid M$, hence by Remark \ref{rmk2}, $M-1\notin \mathcal{M}_{\mathscr{Q}}$. So, we get $B[M-1]=1$. Hence, each such block $B$ has at least two non-zero entries.

\begin{Prop}\label{pro4}
    If $\eta_{\mathscr{Q}}[1, M'] \preccurlyeq \eta_{\mathscr{B}}[s+1, s+M']$ for some $s\in \mathbb{N}$ and $M'\geq M$ such that $\lcm (\mathscr{Q}) \mid M'$, then $\eta_{\mathscr{B}}[s+M'+1]=1=\eta_{\mathscr{B}}[s+2M'-1].$
\end{Prop}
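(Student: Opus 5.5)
The plan is to reduce everything to divisibility by $\lcm(\mathscr{Q})$ and then use the inclusion $\cf_{\mathscr{Q}}\subseteq\cf_{\mathscr{B}}$.

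\emph{Step 1: $s$ is a multiple of $\lcm(\mathscr{Q})$.} The hypothesis $\eta_{\mathscr{Q}}[1,M']\preccurlyeq\eta_{\mathscr{B}}[s+1,s+M']$ with $M'\ge M$ is exactly the assumption of Lemma~\ref{le1}, taken with $n=s$. Recall that $\mathscr{Q}$ has been chosen to satisfy (\ref{eq:e1}) and $M$ is as in (\ref{eq:e2}). The lemma then gives $\lcm(\mathscr{Q})\mid s$.

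\emph{Step 2: the two indices are $\mathscr{Q}$-free.} Since also $\lcm(\mathscr{Q})\mid M'$, both $s+M'$ and $s+2M'$ lie in $\lcm(\mathscr{Q})\Z$. By Remark~\ref{rmk2}, applied to $k=s+M'$ and to $k=s+2M'$, we get
\[
s+M'+1\in\cf_{\mathscr{Q}} \quad\text{and}\quad s+2M'-1\in\cf_{\mathscr{Q}}.
\]

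\emph{Step 3: pass from $\mathscr{Q}$-free to $\mathscr{B}$-free.} By (\ref{aa}) we have $\mathscr{B}\subseteq\cM_{\mathscr{Q}}$. Each $b\in\mathscr{B}$ is therefore a multiple of some $q\in\mathscr{Q}$, so $b\Z\subseteq q\Z$ and hence $\cM_{\mathscr{B}}\subseteq\cM_{\mathscr{Q}}$. Taking complements gives $\cf_{\mathscr{Q}}\subseteq\cf_{\mathscr{B}}$. Combined with Step 2, this yields $\eta_{\mathscr{B}}[s+M'+1]=1=\eta_{\mathscr{B}}[s+2M'-1]$.

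The only substantive ingredient is Step 1, i.e.\ Lemma~\ref{le1}. Once $\lcm(\mathscr{Q})\mid s$ is available, the rest is bookkeeping, so I do not expect any real obstacle.
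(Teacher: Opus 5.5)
Your proof is correct and follows the paper's argument: Lemma~\ref{le1} gives $\lcm(\mathscr{Q})\mid s$, and Remark~\ref{rmk2} then shows that $s+M'+1$ and $s+2M'-1$ are $\mathscr{Q}$-free. Your Step~3, the inclusion $\cf_{\mathscr{Q}}\subseteq\cf_{\mathscr{B}}$ coming from $\mathscr{B}\subseteq\cM_{\mathscr{Q}}$, is used only implicitly in the paper, which passes straight from Remark~\ref{rmk2} to $\mathscr{B}$-freeness, so making it explicit is a worthwhile addition.
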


\begin{proof}
    Using Lemma \ref{le1}, we obtain $\lcm(\mathscr{Q})\mid s$. Moreover, $\lcm (\mathscr{Q})\mid s+M'$  and $\lcm (\mathscr{Q})\mid s+2M'$ (as $\lcm (\mathscr{Q})\mid M'$). By Remark \ref{rmk2}, we obtain $s+M'+1, s+2M'-1 \notin \mathcal{M}_{\mathscr{B}}$. Thus, it follows that $\eta_{\mathscr{B}}[s+M'+1]=1=\eta_{\mathscr{B}}[s+2M'-1]$.
\end{proof}

\begin{Remark}\label{rmk3} Remark 5.4 in \cite{MR4938646} says \[\widetilde{X_{\eta_{\mathscr{B}}}} =\overline{\{\sigma^ny \colon y\preccurlyeq x,n\in\Z\}}.\] So
  for any $z\in \{0, 1\}^{\Z}$ the following conditions are equivalent:
  \begin{itemize}
      \item[(a)] $z\in \widetilde{X_{\eta_{\mathscr{B}}}}$.

      \item[(b)] there exists a sequence $(q_m)_{m\in\N}$  of integers and a sequence $(m_k)_{k\in\N}$ such that for every  $k\in \mathbb{N}$ large enough and all  $m>m_k$: $$z[-k, k] \preccurlyeq \sigma^{q_m} ({\eta_{\mathscr{B}}})[-k, k].$$
  \end{itemize}
\end{Remark}

\begin{Prop}\label{prop:taut_her}
    Assume that $\mathscr{B}\subseteq \mathcal{M}_{\mathscr{Q}}$ for a  finite primitive set $\mathscr{Q}\subseteq \mathbb{N}\setminus \{1\}$. Then there exist blocks $D_{\varepsilon, \delta}$ for $\varepsilon, \delta \in \{0, 1\}$ and $ C$ satisfying the hypothesis of Proposition \ref{prop:hereditaryperiodic} with $Y= \widetilde{X_{\eta_{\mathscr{B}}}}$.
\end{Prop}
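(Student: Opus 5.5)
The plan is to build all five blocks out of the block $B=\eta_{\mathscr{Q}}[1,M]$ from (\ref{eq:B}), with $M=(K+1)\lcm(\mathscr{Q})$.

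\textbf{The ambient facts.} Since $\mathscr{B}\subseteq\mathcal{M}_{\mathscr{Q}}$, we have $\mathcal{M}_{\mathscr{B}}\subseteq\mathcal{M}_{\mathscr{Q}}$, hence $\eta_{\mathscr{Q}}\preccurlyeq\eta_{\mathscr{B}}$. So $\eta_{\mathscr{Q}}$, and every sequence below it, lies in the hereditary subshift $Y=\widetilde{X_{\eta_{\mathscr{B}}}}$. Also $\lcm(\mathscr{Q})\mid M$, so $\eta_{\mathscr{Q}}$ restricted to consecutive length-$M$ windows starting at $1 \bmod M$ is always $B$. Recall that $B[1]=1$, $B[M-1]=1$ and $B[M]=0$.

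\textbf{The blocks.} For $\varepsilon,\delta\in\{0,1\}$ let $E_{\varepsilon,\delta}$ be the block of length $M$ with $E[1]=\varepsilon$, $E[M-1]=\delta$ and zeros elsewhere. Put
\[
D_{\varepsilon,\delta}=B\,E_{\varepsilon,\delta},\qquad C=BB,
\]
all of length $\ell=2M$. The four $D$'s are pairwise different, and they differ from $C$ because $E_{\varepsilon,\delta}\preccurlyeq B$ with $E\ne B$ whenever $B$ has a third $1$. If $B$ has only two ones, I would instead use $C=BB'$ for a suitable fixed $B'\preccurlyeq B$ outside the family. This gives (a).

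\textbf{Condition (d).} The periodic sequence $\ldots CD_{0,0}CD_{0,0}\ldots=\ldots BBBE_{0,0}BBBE_{0,0}\ldots$ is coordinatewise below the $M$-periodic sequence $\eta_{\mathscr{Q}}$ (suitably shifted so that windows start at $1$). Hence it lies in $Y$.

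\textbf{Condition (b), exchangeability.} This is where Proposition \ref{pro4} enters. Take $y\in Y$ with $y[n+1,n+2M]=D_{\varepsilon,\delta}$, and let $y'$ be obtained by inserting $D_{\varepsilon',\delta'}$ there. It suffices to check condition (b) of Remark \ref{rmk3} for $y'$. Every approximating window $\sigma^{q_m}\eta_{\mathscr{B}}[-k,k]$ dominating $y[-k,k]$ (for $k$ large) dominates the copy of $B=\eta_{\mathscr{Q}}[1,M]$ at positions $n+1,\ldots,n+M$. Proposition \ref{pro4}, applied with $M'=M$ and $s=n+q_m$, then gives that $\sigma^{q_m}\eta_{\mathscr{B}}$ equals $1$ at positions $n+M+1$ and $n+2M-1$. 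These are exactly the two positions where $E_{\varepsilon',\delta'}$ may be $1$, and all other positions of $E$ are $0$. Hence the same windows dominate $y'[-k,k]$, so $y'\in Y$. One small point needs checking: Proposition \ref{pro4} is stated for $s\in\N$, but the argument via Lemma \ref{le1} works for all integers $s$.

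\textbf{Condition (c), the main obstacle.} We must show that $C$ occurs in $D_{\varepsilon,\delta}C=B\,E\,B\,B$ only at offset $m=\ell$, and in $C\,D_{\varepsilon,\delta}=B\,B\,B\,E$ only at offset $m=0$.

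First, an occurrence of $BB$ at offset $m$ means that $\eta_{\mathscr{Q}}[1,M]\preccurlyeq$ the shifted window. By the argument of Lemma \ref{le1}, which only uses $\eta_{\mathscr{Q}}[1,M]\preccurlyeq\eta_{\mathscr{B}}$-type domination, and both host words are $\preccurlyeq\eta_{\mathscr{Q}}$-aligned, this forces $\lcm(\mathscr{Q})\mid m$. So I would first try to reduce to offsets that are multiples of $\lcm(\mathscr{Q})$.

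For those offsets I would compare densities. $E$ contains at most two ones, while every length-$M$ aligned window of $\eta_{\mathscr{Q}}$ contains $|\cf_{\mathscr{Q}}\cap[1,M]|$ ones, which is at least $2(K+1)$. So a window of $BB$ cannot sit across an $E$ block except at offsets avoiding $E$ entirely, namely $m=\ell$ in the first case and $m=0$ in the second. The same counting excludes offsets $m$ with $0<m<\ell$ that overlap $E$ only partially: such an overlap would require a length-$M$ block of $B$ to be dominated by fewer ones than it contains, using that $\lcm(\mathscr{Q})$-periodicity makes the one-count of $B$-windows rigid.

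If the degenerate case $|\cf_{\mathscr{Q}}\cap[1,M]|\le 2$ causes trouble, I would enlarge $M$ to a larger multiple $M'$ of $\lcm(\mathscr{Q})$. Lemma \ref{le1} and Proposition \ref{pro4} allow any $M'\ge M$ with $\lcm(\mathscr{Q})\mid M'$, and this makes the count of ones in $B$ as large as needed.
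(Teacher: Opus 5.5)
Your treatment of (b) and (d) is essentially the paper's. The copy of $B=\eta_{\mathscr{Q}}[1,M]$ at the start of $D_{\varepsilon,\delta}$ forces, via Lemma~\ref{le1} and Proposition~\ref{pro4}, every approximating $\sigma^{q_m}\eta_{\mathscr{B}}$ to equal $1$ at the two switchable positions. And everything is dominated by a shift of $\eta_{\mathscr{Q}}$.

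The gap is condition (c), and with your blocks it is simply false. Take $C=BB$ and $D_{\varepsilon,\delta}=BE_{\varepsilon,\delta}$. Then $CD_{\varepsilon,\delta}=BBBE_{\varepsilon,\delta}$, and $(CD_{\varepsilon,\delta})[M+1,3M]=BB=C$ with $0<M\le \ell=2M$. More generally, $BBB=\eta_{\mathscr{Q}}[1,3M]$ and $\eta_{\mathscr{Q}}$ is $\lcm(\mathscr{Q})$-periodic. So $C$ occurs in $CD_{\varepsilon,\delta}$ at every offset $m\in \lcm(\mathscr{Q})\mathbb{Z}\cap[0,M]$. Your counting argument only rules out windows that meet $E_{\varepsilon,\delta}$, but these windows lie entirely inside $BBB$. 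Enlarging $M$ does not help, since $BBB$ contains $BB$ at offset $M$ for every $M$. The underlying problem is that a marker built as a window of the periodic background $\eta_{\mathscr{Q}}$ necessarily overlaps itself, which is exactly what (c) forbids. This non-overlap is what makes maximal active intervals unique in the proof of Proposition~\ref{prop:hereditaryperiodic}.

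The missing idea is a marker that is dominated by the periodic pattern but is not a piece of it. The paper takes $\ell=3M$, $C=10^{\ell-1}$ and $D_{\varepsilon,\delta}=B\varepsilon 0^{M-3}\delta 0B$. Then $C\preccurlyeq \eta_{\mathscr{Q}}[1,\ell]$ still gives (d), and (c) is checked by hand:
\begin{itemize}
\item $C$ has a single $1$, in its first position, so it cannot start at an interior position of $C$.
\item $C\neq D_{\varepsilon,\delta}$, because $D_{\varepsilon,\delta}$ also contains the ones of $B$.
\item An occurrence of $C$ starting at an interior position of $D_{\varepsilon,\delta}$ in $D_{\varepsilon,\delta}C$ would contain the leading $1$ of the following $C$ in a non-initial position.
\end{itemize}

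Your own $D_{\varepsilon,\delta}=BE_{\varepsilon,\delta}$ would also work with $C=10^{2M-1}$. In (d), the period $10^{2M-1}B0^{M}$ is dominated by $\eta_{\mathscr{Q}}[1,4M]$ because $\lcm(\mathscr{Q})\mid 2M$. In (a), $D_{\varepsilon,\delta}\neq C$ because $B[1]=B[M-1]=1$ with $M\ge 4$. This choice of $C$ also removes your unresolved degenerate case in (a), where $B$ has only two ones and $E_{1,1}=B$.
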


\begin{proof} Thanks to Proposition \ref{pro2} we can assume that $\mathscr{Q}$ has the property (\ref{eq:e1}).
    Take $\ell = 3M$ and  set the words \begin{align*}
       D_{\varepsilon, \delta} &:= B\varepsilon 0^{\frac{\ell}{3}-3}\delta 0 B,\\
        C&:= 10^{\ell -1}.
    \end{align*}
    where $\varepsilon, \delta \in \{0, 1\}$ and $B$ is as in $(\ref{eq:B})$. Due to the periodicity of $\eta_{\mathscr{Q}}$ and by Proposition \ref{pro4}, it  follows that
    \begin{equation}\label{eq:e7}
        D_{\varepsilon, \delta}\preccurlyeq \eta_{\mathscr{Q}}[1, \ell] \preccurlyeq \eta_{\mathscr{B}}[1, \ell].
    \end{equation}

     We verify that the blocks $D_{\varepsilon, \delta}$ and $ C$ satisfy the hypothesis of Proposition \ref{prop:hereditaryperiodic}.

\begin{itemize}
    \item[(a)] This is immediate due to the fact that $ D_{\varepsilon, \delta} [M+1] =\varepsilon, D_{\varepsilon, \delta}[2M-1] = \delta, D_{\varepsilon, \delta}[2M+1]=1$  and $C[2M+1] = 0$.

    \item[(b)] Assume that $z \in \widetilde{X_{\eta_{\mathscr{B}}}}$ is such that $z[r+1, r+\ell] = D_{\varepsilon, \delta}$ for some $r\in \Z$ and $\varepsilon, \delta \in \{0, 1\}$. Take any sequence $z' $ such that $z' [r+1, r+\ell] = D_{\varepsilon', \delta'}$, where $\varepsilon', \delta' \in \{0, 1\}$ and $z'[i] =z[i]$ for $i\in \Z\setminus [r+1, r+\ell]$. We claim $z' \in \widetilde{X_{\eta_{\mathscr{B}}}}$.

As $z \in \widetilde{X_{\eta_{\mathscr{B}}}}$,  by Remark \ref{rmk3},
there exists a sequence $(q_m)_{m\in\N}$  of integers and a sequence $(m_k)_{k\in\N}$ such that for every  $k\in \mathbb{N}$ large enough and all  $m>m_k$:
    \begin{equation}\label{eq:e8}
        z[-k, k] \prec \sigma^{q_m} (\eta_{\mathscr{B}})[-k, k].
    \end{equation}

   Fix $k\geq r+\ell$, so that $z[1, k]$ contains the block $D_{\varepsilon, \delta}$. Due to our specific construction of $D_{\varepsilon, \delta}$, we see that $z'$ can differ from $z$ at most at two positions, namely at $r+M+1$ and $r+2M-1$. In other words, for all  $m>m_{k}$ and all $i \in \Z\setminus \{r+M+1, r+2M-1\}$ with $|i|\leq k$,
    \begin{equation}\label{eq:e9}
        z'[i]= z[i] \leq \sigma^{q_m} (\eta_{\mathscr{B}})[i].
    \end{equation}

   So, to prove our claim, we just need to show that $z'[r+M+1] \leq \sigma^{q_m} \eta_{\mathscr{B}} [r+M+1]$ and $z'[r+2M-1] \leq \sigma^{q_m} \eta_{\mathscr{B}}[r+2M-1]$.

Observe that $z[r+1, r+M] = z[r+2M+1, r+\ell] = B$.  It follows that $$z[r+1, r+M]=B= \eta_{\mathscr{Q}}[1, M]\preccurlyeq \eta_{\mathscr{B}} [q_m+r+1, q_m+r+M].$$ Thanks to Proposition \ref{pro4}, we have $\sigma^{q_m} \eta_{\mathscr{B}} [r+M+1] = 1 = \sigma^{q_m} \eta_{\mathscr{B}} [r+2M-1]$. Hence, $z'[r+M+1] =\varepsilon'\leq \sigma^{q_m} \eta_{\mathscr{B}}[r+M+1]=1$ and $z'[r+2M-1]=\delta' \leq \sigma^{q_m} \eta_{\mathscr{B}}[r+2M-1]=1$. So, from (\ref{eq:e9}), the inequality $z'[i] \leq \sigma^{q_m}\eta_{\mathscr{B}}[i]$ holds for all $i\in \Z$ with $|i|\leq k$. Therefore, by Remark \ref{rmk3}, the claim follows.

\item[(c)] Since $C$ starts with $1$, it follows directly from the construction.

\item[(d)] This follows directly from the fact that $\eta_{\mathscr{Q}} \in \widetilde{X_{\eta_{\mathscr{B}}}}$ and by (\ref{eq:e7}), every sequence described in $(d)$ is coordinatewise less than or equal to $\sigma^m (\eta_{\mathscr{Q}})$ for some $m \in \Z$.
\end{itemize}

\end{proof}

Proposition \ref{prop:taut_her} together with Proposition \ref{prop:hereditaryperiodic} prove Theorem \ref{thm:main2}.

\begin{example}
    To understand the above construction, let's look at the following simple example. Take $$\mathscr{Q} = \{4, 6, 10\}$$ and set $\mathscr{B} = \{8c_i : i\in\N\} \cup \{18 c_i : i\in\N\}$ where $\{c_i : i\in\N\}$ are pairwise coprime such that $c_1 = 1, \gcd(c_i,2)=1$ and $\gcd(c_i, 3)=1$. It is clear that $\mathscr{Q}$  and $\mathscr{B} \subseteq \mathcal{M}_{\mathscr{Q}}$ are primitive. Observe that $\mathscr{Q}$ does not satisfy the condition (\ref{eq:e1}) as $10 \neq \gcd (b, \lcm(\mathscr{Q}))$  for any $b \in \mathscr{B}$. We take $\mathscr{Q}'=\{4, 6\}$ then the condition (\ref{eq:e1}) is satisfied. In fact, if we take $\mathscr{Q}'= \{4, 18\}$, then also (\ref{eq:e1}) is satisfied. Hence, $\mathscr{Q}'$ is not unique. We proceed by taking $\mathscr{Q} = \{4, 6\}$. So, $\lcm(\mathscr{Q}) = 12$. Furthermore, by (\ref{eq:e2}), $K=3$ and thus $M=48$. Hence, $\ell=144$ and for any $\varepsilon, \delta \in \{0, 1\}$, the exchangeable blocks are as follows:
    $$D_{\varepsilon, \delta} = \eta_{\mathscr{Q}} [1, 48]\varepsilon 0^{45}\delta 0 \eta_{\mathscr{Q}}[1, 48].$$ $$C = 10^{143}.$$
\end{example}

\section{On automorphisms of $\mathscr{B}$-admissible subshifts}\label{sec:admissible}

Let $N\in \N$. We say that a collection $(A_i)_{i\in I}$, where $I$ is a set and every $A_i$ is either a set of integers or a single integer, is {\em $N$-sparse}, if  $|a-a'|>N$ when  $a\in A_i, a'\in A_j$ and $i,j\in I$, $i\neq j$. If $A_i$ is a singleton, we often identify $A_i$ with its unique element. In particular, we  say that a set $A\subseteq\Z$ is {\em $N$-sparse} if $|a-a'|>N$ for every distinct $a,a'\in A$.

\begin{definition}\label{def:contour}
Given a primitive set $\mathscr{B}\subseteq\N\setminus\{1\}$ and a finite set $S\subseteq \mathscr{B}$ by an {\em $S$-contour} we mean a set of the form
$$
K_{\bf r}:=\bigcup_{b\in S}(r_b+b\Z)
$$
for some ${\bf r}=(r_b)_{b\in S}\in\Z^S$ satisfying the following conditions:
\begin{enumerate}
\item[(a)] the set $K_{\bf r}$ is minimal with respect to the inclusion among all sets of the form $\bigcup_{b\in S}(r'_b+b\Z)$, where $r'\in\Z^S$,
\item[(b)] the set $\Z\setminus K_{\bf r}$ is $\mathscr{B}$-admissible. \footnote{It follows by the condition (a) that $\Z\setminus K_{\bf r}\neq \emptyset$. Let us mention that it is an open problem (called the Schinzel's Conjecture in \cite{Zhi}) if $\bigcup_{b\in S}(r_b+b\Z)\neq \Z$ for every collection $r=(r_b)_{b\in S}\in \Z^{S}$, provided  the set $S$ is primitive. It is shown in  \cite[Theorem 2]{Schinzel} that the assertion follows from the Erd\"os-Selfridge Conjecture (if $\Z$ is a union of finite family of arithmetic progressions $r_b+b\Z$, $b\in S\subseteq\N\setminus\{1\}$, then one of the numbers $b$ is even).}
\end{enumerate}
\end{definition}

\begin{remark}\label{rem:crit} Observe that if $S\subseteq S'\subseteq \mathscr{B}$ and there is an $S$-contour, then there is an $S'$-contour.
Indeed, if $K_{\bf r}$ is an $S$-contour, where ${\bf r}=(r_b)_{b\in S}$, then, as $\Z\setminus K_{\bf r}$ is $\mathscr{B}$-admissible, for every $b'\in S'\setminus S$ there exists $r_{b'}\in \Z$ such that $r_{b'}+b'\Z\subseteq K_{\bf r}$. Let ${\bf r'}=(r_b)_{b\in S'}$. Then $K_{\bf r'}=K_{\bf r}$ and it is an $S'$-contour. It follows by  Theorem \ref{thm:main} that
if there exists a filtration $S_1\subset S_2\subset \ldots$ of $\mathscr{B}$ by finite sets such that for every $i\in\N$ and every ${\bf r}\in\Z^{S_i}$ the set $\Z\setminus K_{\bf r}$ is not $\mathscr{B}$-admissible then $\A_{\mathscr{B}}$ admits only trivial automorphisms.
\end{remark}
\begin{remark}\label{rem:sparse}
Assume that $F$ is a continuous endomorphism of the full shift $(\{0,1\}^{\Z},\sigma)$ defined by a code of radius $N$.
Assume moreover that  $(A_i)_{i\in I}$ is an $N$-sparse family of subsets of $\Z$. Then
$$
\supp(F(\raz_{\bigcup_{i\in I}A_i}))=\bigcup_{i\in I}\supp(F(\raz_{A_i})).
$$
In particular, if   $0\in\supp(F(\raz_{\{0\}}))$ and a set  $T\subseteq \Z$ is $N$-sparse, then $T\subseteq\supp(F(\raz_{T}))$.
\end{remark}

We prove the implication (a)$\Rightarrow$(c) of Theorem~\ref{thm:main} in Subsection \ref{sec:ac} and the implication (c)$\Rightarrow$(b) of Theorem~\ref{thm:main} in Subsection \ref{sec:cb}
Notice that (b)$\Rightarrow$(a) of Theorem~\ref{thm:main} is trivial.


\subsection{Proof of (a)$\Rightarrow$(c) in Theorem \ref{thm:main}}\label{sec:ac}
The idea is contained in  \cite{LRS} and   \cite{Me}.

\begin{lemma}\label{lem:repeat}
Let $A\subset \Z$ be a finite nonempty $\mathscr{B}$-admissible set and $M$  a fixed number. There exists an  infinite $M$-sparse sequence $(\ell_i)_{i\in\N\cup\{0\}}$ of integers such that the set
$$
A+\{\ell_i:i\in\N\cup\{0\}\}
$$
is $\mathscr{B}$-admissible, and for every $b\in\mathscr{B}$, $b|\ell_i$ for $i$ large enough.
\end{lemma}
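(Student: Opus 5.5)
The plan is to build the $\ell_i$ greedily, making each $\ell_i$ divisible by all elements of $\mathscr{B}$ up to a threshold that grows linearly in $i$. Small $b$ then see only $A$ modulo $b$, and large $b$ are handled by counting residues.

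\textbf{Construction.} Put $a:=|A|\ge 1$. For $i\ge 0$ let
$$
P_i:=\lcm\{b\in\mathscr{B}: b\le (i+1)a\},
$$
with $P_i:=1$ if this set is empty. Each such set is finite, so $P_i$ is well defined, and $P_i\mid P_{i+1}$. Set $\ell_0:=0$. Inductively, for $i\ge1$ choose $\ell_i$ to be a positive multiple of $P_i$ with $\ell_i>\ell_{i-1}+M$. This choice is always possible, and the resulting sequence is strictly increasing with gaps larger than $M$, so it is $M$-sparse.

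\textbf{Divisibility for large $i$.} Fix $b\in\mathscr{B}$. For every $i$ with $(i+1)a\ge b$ we have $b\mid P_i\mid \ell_i$. Hence $b\mid \ell_i$ for all $i$ large enough.

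\textbf{Admissibility.} Fix $b\in\mathscr{B}$ and let $j\ge0$ be minimal with $b\le (j+1)a$, so that $b\mid\ell_i$ for every $i\ge j$. We split into two cases.

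\begin{itemize}
\item If $j=0$, then $b\mid\ell_i$ for all $i$. Hence
$$
(A+\{\ell_i\})\bmod b=A\bmod b,
$$
which has fewer than $b$ elements because $A$ is $\mathscr{B}$-admissible.
\item If $j\ge1$, then minimality gives $b>ja$. For $i\ge j$ we have $A+\ell_i\equiv A=A+\ell_0 \pmod b$. Therefore
$$
(A+\{\ell_i\})\bmod b=\bigcup_{i<j}(A+\ell_i)\bmod b,
$$
and this set has at most $ja<b$ elements.
\end{itemize}

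Thus $A+\{\ell_i\}$ misses a residue modulo every $b\in\mathscr{B}$, i.e.\ it is $\mathscr{B}$-admissible.

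\textbf{Main point.} The only real obstacle is that $\mathscr{B}$ may be infinite, so we cannot simply make the $\ell_i$ divisible by everything. Indexing the divisibility threshold by $(i+1)|A|$ is what resolves this. Every $b$ that fails to divide some early $\ell_i$ is then large enough that the finitely many translates $A+\ell_0,\dots,A+\ell_{j-1}$ cannot cover all residues modulo $b$. Including $\ell_0=0$ ensures that the tail translates add no new residues.
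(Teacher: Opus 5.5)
Your proof is correct and follows essentially the paper's construction: $\ell_0=0$, each later $\ell_i$ is a multiple of the $\lcm$ of all $b\in\mathscr{B}$ below a threshold growing linearly in $|A|$, small $b$ see only $A\bmod b$, and large $b$ are handled by counting translates. Your threshold $(i+1)|A|$ is in fact the right one: the paper only demands $\lcm\{b\in\mathscr{B}:b\le i|A|\}\mid\ell_i$, and for $i|A|<b\le(i+1)|A|$ it overlooks that the tail terms (like $\ell_0$) contribute $A\bmod b$, so it really has $i+1$ translates and the bound fails (e.g.\ $A=\{0\}$, $2\in\mathscr{B}$ and an odd $\ell_1$ give a set that is not $2$-admissible), whereas your index shift makes the count $j|A|<b$ come out exactly.
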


\begin{proof}
For $n\in\N\cup\{0\}$ let $S_n=\{b\in\mathscr{B}:b\le n|A|\}$. Then $S_0=\emptyset$. We set $\ell_0=0$ and for $i>0$ we choose $\ell_i$ divisible by $\lcm(S_i)$ such  that $|\ell_i-\ell_j|>M$ provided $i\neq j$. Note that for every $b\in\mathscr{B}$, $b|\ell_i$ for $i$ large enough. By induction on $i$ we prove that $A+\{\ell_i:i\in\N\}$ is $b$-admissible for every $b\in S_i$.
Take $b\in S_{i+1}\setminus S_i$ for some $i\ge 0$. If $i=0$ then
$$
(A+\{\ell_i:i\in\N\})\mod b= A\mod b
$$ because $b|\ell_i$ for every $i>0$. Therefore $A+\{\ell_i:i\in\N\}$ is $b$-admissible. If $i>0$, then $b|\ell_j$ for $j\ge i+1$, hence
$$
(A+\{\ell_i:i\in\N\}) \mod b= (A+\{\ell_1,\ldots,\ell_i\})\mod b.
$$
Since $|A+\{\ell_1,\ldots,\ell_i\}|\le i|A|<b$, the set $A+\{\ell_i:i\in\N\}$ is $b$-admissible. The proof is complete.
\end{proof}

\begin{corollary}\label{cor:witness}
Let $A\subset \Z$ be a finite $\mathscr{B}$-admissible set and $M$  a fixed number. For every $b_0\in\mathscr{B}$ there exist integers $n_a$ for $a\in A$ such that the set
$A\cup\{n_a:a\in A\}$ is $\mathscr{B}$-admissible, $a=n_a\mod b_0$ for every $a\in A$ and the collection $A, (n_a)_{a\in A}$ is $M$-sparse.
\end{corollary}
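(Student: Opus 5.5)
We apply Lemma~\ref{lem:repeat} to the set $A$ with a sparseness constant larger than $M$. Choose $M_1 > M + 2\max_{a,a'\in A}|a-a'|$. The lemma gives an infinite $M_1$-sparse sequence $(\ell_i)_{i\ge 0}$ with $\ell_0=0$ such that $A+\{\ell_i:i\ge0\}$ is $\mathscr{B}$-admissible and, for every $b\in\mathscr{B}$, $b\mid\ell_i$ for all $i$ large enough. Fix $b_0\in\mathscr{B}$. Choose indices $i_a\ge 1$, one for each $a\in A$, pairwise distinct and large enough that $b_0\mid\ell_{i_a}$. Then set $n_a:=a+\ell_{i_a}$.

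\emph{Congruences.} Since $b_0\mid \ell_{i_a}$, we get $n_a\equiv a \pmod{b_0}$.

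\emph{Admissibility.} Each $n_a=a+\ell_{i_a}$ lies in $A+\{\ell_i\}$, and $A=A+\ell_0$ also lies there. Hence $A\cup\{n_a:a\in A\}\subseteq A+\{\ell_i:i\ge0\}$. A subset of a $\mathscr{B}$-admissible set is $\mathscr{B}$-admissible, since $|X\bmod b|$ is monotone in $X$. So $A\cup\{n_a\}$ is $\mathscr{B}$-admissible.

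\emph{Sparseness.} Let $d=\max_{a,a'\in A}|a-a'|$. First compare two different terms $n_a,n_{a'}$ with $a\neq a'$:
$$|n_a-n_{a'}|\ge|\ell_{i_a}-\ell_{i_{a'}}|-|a-a'|>M_1-d>M.$$
Next compare $n_a$ with an element $a'\in A$:
$$|n_a-a'|\ge|\ell_{i_a}-\ell_0|-|a-a'|>M_1-d>M.$$
So the collection consisting of $A$ and the singletons $n_a$ is $M$-sparse. Elements inside $A$ itself need not be far apart, because $A$ is a single member of the collection.

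The only point needing care is the choice of $M_1$ so that the perturbations by elements of $A$ do not destroy sparseness; the argument is otherwise straightforward. One small check: if $A$ is empty the statement is trivial, so we may assume $A$ is nonempty, as Lemma~\ref{lem:repeat} requires.
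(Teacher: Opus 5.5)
Your proof is correct and is essentially the paper's argument: apply Lemma~\ref{lem:repeat}, choose distinct indices $i_a$ with $b_0\mid\ell_{i_a}$, and set $n_a=a+\ell_{i_a}$. Your enlarged constant $M_1>M+2\max_{a,a'\in A}|a-a'|$ makes the separation estimates rigorous, whereas the paper's condition $\ell_{i_a}>\max\{|a|:a\in A\}+M$ does not by itself force $|n_a-a'|>M$; the only point to flag is that $\ell_0=0$ comes from the construction in the proof of Lemma~\ref{lem:repeat} rather than from its statement (the paper relies on this tacitly as well).
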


\begin{proof}
For every $a\in A$ we choose $\ell_{i_a}$ from the numbers $\ell_i$ in Lemma \ref{lem:repeat} such that $b_0|\ell_{i_a}$,  $\ell_{i_a}>\max\{|a|:a\in A\}+M$ and $\ell_{i_a}\neq \ell_{i_{a'}}$ whenever $a\neq a'$. We set $n_a=a+\ell_{i_a}$ for $a\in A$.
\end{proof}

\begin{lemma}\label{lem:kontur}
Let $F$ be an automorphism of $\A_{\mathscr{B}}$ given by a code
$$
 f:\{0,1\}^{[-N,N]}\rightarrow\{0,1\}
$$
for some $N\in\N$. Assume that $f(0^N10^N)=1$ and $f(B)=1$ for a $\mathscr{B}$-admissible block $B\in\{0,1\}^{[-N,N]}$, $B\neq 0^{2N+1}$ such that $0\notin\supp(B):=\{i\in [-N,N]: B[i]=1\}$ {and $S$ is a finite subset of $\mathscr{B}$ such that $[1,N]\cap\mathscr{B}\subseteq S$}.
Then for every collection of integers ${\bf r}=(r_b)_{b\in S}\in \Z^S$ such that  the set
$$
K_{\bf r}:=\bigcup_{b\in S}(r_b+b\Z)
$$
is disjoint with $\supp(B)$,
the set
$\Z\setminus K_{\bf r}$
is $\mathscr{B}$-admissible.
In particular, there exists an $S$-contour disjoint with $\supp(B)$.
\end{lemma}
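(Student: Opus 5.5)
The plan is to argue by contradiction, using $F$ to manufacture a non-admissible sequence from an admissible one. Suppose $K_{\bf r}$ is disjoint from $\supp(B)$ but $\Z\setminus K_{\bf r}$ is not $\mathscr{B}$-admissible. Then there are $b_0\in\mathscr{B}$ and a finite set $A\subseteq\Z\setminus K_{\bf r}$ with $A\bmod b_0=\Z/b_0\Z$. The goal is to build $x\in\A_{\mathscr{B}}$ with
$$A'\subseteq\supp F(x),\qquad A'\equiv A\pmod{b_0}.$$
Then $F(x)\notin\A_{\mathscr{B}}$, contradicting that $F$ maps $\A_{\mathscr{B}}$ into itself. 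The two hypotheses on $f$ supply the two mechanisms. An isolated $1$ (window $0^N10^N$) survives under $F$. A copy of the window $B$ around a point $c$ produces $F(x)[c]=1$ although $x[c]=0$. If the pieces of $\supp x$ are $2N$-sparse, then by Remark~\ref{rem:sparse} (applied to the centres and the isolated points) $\supp F(x)$ contains all isolated $1$'s of $x$ and all centres of $B$-windows.

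Write $L=\lcm(S)$. For $b\in S$, the set $K_{\bf r}\cap(r_b+b\Z)$ is $L$-periodic, so the condition $K_{\bf r}\cap\supp(B)=\emptyset$ propagates: $(c+\supp(B))\cap K_{\bf r}=\emptyset$ for every $c\in L\Z$. Likewise each $a\in A$ can be moved to $a+L\Z$ without entering $K_{\bf r}$. So I would place two kinds of pieces, using Lemma~\ref{lem:repeat} and Corollary~\ref{cor:witness} to make them sparse and to control the large moduli.
\begin{itemize}
\item Isolated $1$'s at points $n_a\equiv a \pmod{L}$ for part of $A$.
\item $B$-windows centred at points $c_a\equiv a\pmod{b_0}$ with $c_a\in L\Z$, for those $a$ whose residue mod $b_0$ is compatible with $L\Z$.
\end{itemize}
The remaining residues would be treated by combining a window with a shift.

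Checking $\supp x\in\A_{\mathscr{B}}$ splits into two cases. For $b\in S$, everything we placed lies in $\Z\setminus K_{\bf r}$, which misses $r_b$ mod $b$. For $b\notin S$ we have $b>N$ (because $[1,N]\cap\mathscr{B}\subseteq S$). Here I would repeat the counting of Lemma~\ref{lem:repeat}: choose the translates divisible by $\lcm$ of all $b\in\mathscr{B}$ up to a growing threshold. Then each $b$ sees only a finite set whose size is smaller than $b$, or sees the pattern of $\supp(B)$ alone, which is admissible since $B$ is a $\mathscr{B}$-admissible block.

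The main obstacle is the bookkeeping of residues. We must simultaneously keep every piece of $\supp x$ outside $K_{\bf r}$ modulo each $b\in S$ and make the new $1$'s of $F(x)$ cover all classes mod $b_0$. This is exactly where the disjointness of $K_{\bf r}$ from $\supp(B)$ and the requirement $0\notin\supp(B)$ must interact. I would first try the case $b_0\in S$, then reduce the general case via Remark~\ref{rem:crit} by enlarging $S$ to $S\cup\{b_0\}$. The final assertion would follow by taking an inclusion-minimal $K_{\bf r}$ among those disjoint from $\supp(B)$: these exist, e.g.\ $r_b\equiv 0$ gives $K_{\bf r}=\mathcal M_S$, which misses $\supp(B)$ when $\supp(B)\cap\mathcal M_S=\emptyset$, and otherwise one first shifts. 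We would then still need to check that such a set is also minimal in the sense of Definition~\ref{def:contour}(a).
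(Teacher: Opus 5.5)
Your overall frame is the paper's: derive a contradiction by producing $x\in\A_{\mathscr{B}}$ with $\supp F(x)$ meeting every class modulo some $b_0$, using isolated $1$'s and a $B$-window placed sparsely via Corollary~\ref{cor:witness}. But the decisive step is left as ``bookkeeping'', and the route you propose for it is misdirected.

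The case $b_0\in S$ you want to start with cannot occur. $\Z\setminus K_{\bf r}$ misses the class $r_b$ modulo every $b\in S$, so any witness of non-admissibility forces $b_0\in\mathscr{B}\setminus S$. By $[1,N]\cap\mathscr{B}\subseteq S$ this gives $b_0>N$. The reduction through Remark~\ref{rem:crit} also fails. Adding $b_0$ to $S$ without changing $K_{\bf r}$ needs a class modulo $b_0$ inside $K_{\bf r}$, and non-$b_0$-admissibility of $\Z\setminus K_{\bf r}$ rules exactly that out.

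Once $b_0>N$, the key point is that $\supp(B)\subseteq[-N,N]\setminus\{0\}$ is disjoint from $b_0\Z$. So a \emph{single} $B$-window at $0$ suffices: it puts the class $0$ mod $b_0$ into $\supp F(x)$, while $x$ itself can avoid that class. Your plan of many windows centred at points $c_a$ is unnecessary and also uncontrolled. Each window adds $c_a+\supp(B)$ to $\supp x$, and you have no way to keep $x$ $b_0$-admissible.

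The second missing ingredient is a guarantee that $x$ is $b$-admissible for \emph{all} $b\in\mathscr{B}$, including $b_0$ and the moduli between $N$ and the size of the configuration. Your counting sketch only handles the far-away translates, not the core finite set. The paper gets this from minimality, and $\supp(B)$ must be included in the witness before minimizing:
\begin{enumerate}
\item[(1)] Choose an inclusion-minimal finite $L\subseteq\Z\setminus K_{\bf r}$ such that $\supp(B)\cup L$ is not $\mathscr{B}$-admissible.
\item[(2)] Since $\supp(B)\cap b_0\Z=\emptyset$, the set $A'=\supp(B)\cup(L\setminus b_0\Z)$ uses a proper subset of $L$. By minimality it is $\mathscr{B}$-admissible, and it still meets every nonzero class mod $b_0$.
\item[(3)] Corollary~\ref{cor:witness} with $M=N$ gives $N$-sparse points $n_a\equiv a\pmod{b_0}$, $a\in A'$, with $A'\cup\{n_a\}$ admissible.
\item[(4)] Then $x=\raz_{\supp(B)\cup\{n_a:a\in A'\}}$ lies in $\A_{\mathscr{B}}$ by heredity, while $\supp F(x)\supseteq\{0\}\cup\{n_a:a\in A'\}$ meets every class mod $b_0$.
\end{enumerate}
Your witness $A$ is chosen without $\supp(B)$ and without minimality, so it cannot support this step.

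The check you defer for the last assertion is immediate. The family of sets $K_{\bf s}$ disjoint from $\supp(B)$ is nonempty because $\supp(B)$ is $S$-admissible. It is finite, and it is closed under passing to smaller sets of the same form. Hence a minimal member of this family is minimal in the sense of Definition~\ref{def:contour}(a).
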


\begin{proof}Let $A=\supp(B)$. As $A$ is $\mathscr{B}$-admissible, there exist integers $r_b$ for $b\in S$ such that \[A\subseteq \Z\setminus\bigcup_{b\in S}(r_b+b\Z).\]
Suppose that the set $\Z\setminus\bigcup_{b\in S}(r_b+b\Z)$  is not $\mathscr{B}$-admissible. Then there exists a finite set $L\subseteq \Z\setminus\bigcup_{b\in S}(r_b+b\Z)$ such that $A\cup L$ is not $\mathscr{B}$-admissible.

Let $L$ be a minimal set with that property. Clearly, $A\cup L$ is $S$-admissible, so
 there exists $b_0\in\mathscr{B}\setminus S$ such that $A\cup L$ is not $b_0$-admissible. Let \[A'=(A\cup L)\setminus b_0\Z.\] Then $A'$ misses the residue class of 0 modulo $b_0$, so $A'$ is a proper subset of $A\cup L$. Since $b_0>N$ by the choice of $S$  and $A\subseteq [-N,N]$, we get $A\subset A'$, so $A'=A\cup(L\setminus b_0\Z)$ and $L\setminus b_0\Z$ is a proper subset of $L$. By the minimal choice of $L$, $A'$ is $\mathscr{B}$-admissible.

 Moreover, every residue class modulo $b_0$ different than 0 is represented in $A'$.  By applying Corollary \ref{cor:witness} to $A=A'$ and $M=N$ we prove that
 there exist integers $n_a$ for $a\in A'$ such that the set
\[A'\cup\{n_a:a\in A'\}\] is $\mathscr{B}$-admissible, $a=n_a\mod b_0$ for every $a\in A'$  and the collection $A', (n_a)_{a\in A'}$ is $N$-sparse. It follows that
every residue class modulo $b_0$ different than 0 is represented in $\{n_a:a\in A'\}$.
Let \[x=\raz_{A\cup\{n_a:a\in A'\}}.\] Then, by Remark \ref{rem:sparse}, the support of $F(x)$ contains $0$ and $n_a$ for $a\in A'$, so it is not $b_0$-admissible, a contradiction.

One can  choose $r_b$ such that the set $K_{\bf r}=\bigcup_{b\in S}(r_b+b\Z)$ is minimal in the family of the sets of the form $\bigcup_{b\in S}(s_b+b\Z)$, that are disjoint with $A$. As $A$ is $\mathscr{B}$-admissible, the family is nonempty. Therefore the last statement of the lemma follows from the remaining part.

\end{proof}


Now we prove the implication (a)$\Rightarrow$(c) of Theorem~\ref{thm:main}. Assume that the condition (c) is not satisfied and
 $F$ is an automorphism of $\A_{\mathscr{B}}$ given by a code
$$
 f:\{0,1\}^{[-N,N]}\rightarrow\{0,1\}
$$
for some $N\in\N$.
First observe that
\begin{equation}\label{eq:aut1}
f(0^{2N+1})=0,
\end{equation}
where $0^{2N+1}=00\ldots 0$. Suppose otherwise. Then $F(0^{\Z})=1^{\Z}\notin \A_{\mathscr{B}}$, a contradiction.

Next we show that
\begin{equation}\label{eq:aut2}
f(e_k)=1\;\text{for some}\;k\in[-N,N],
\end{equation}
where $e_k\in\{0,1\}^{[-N,N]}$ is the block of length $2N+1$  having 1 on the $k$th position and zeros elsewhere.
Indeed, otherwise $F(\raz_{\{0\}})=0^{\Z}$, which contradicts the assumption that $F$ is 1-1.

After composing $F$ with $\sigma^{-k}$ (and modifying $N$, if necessary), we can assume that
$$f(e_0)=1.$$
Under this assumption we will prove that $F$ is the identity map.

We show that
\begin{equation}\label{eq:aut3}
 B\in\{0,1\}^{[-N,N]}, 0\notin \supp(B) \Rightarrow f(B)=0,
\end{equation}
that is, the code $f$ is monotone. Suppose for the contradiction that $f(B)=1$ for some block, whose support $A:=\supp(B)$ does not contain $0$. By Lemma \ref{lem:kontur}
there exists a
finite set $S\subseteq \mathscr{B}$ and an $S$-contour $K_{\bf r}$, contrary to our assumption that (c) does not hold.

It follows by (\ref{eq:aut3}) that $F(x)\preccurlyeq x$ for every $x\in \A_{\mathscr{B}}$. Thus $F(\raz_{\{0\}})=\raz_{\{0\}}$ and by applying the above arguments to $F^{-1}$ we get that
$F^{-1}(y)\preccurlyeq y$ for every $y\in \A_{\mathscr{B}}$ and we conclude that $F(x)=x$ for every $x\in \A_{\mathscr{B}}$. The proof of the implication (a)$\Rightarrow$(c) is complete.

\begin{remark} Under the hypothesis that the condition (c) is not satisfied one can prove that there is no nontrivial injective continuous map $F:\A_{\mathscr{B}}\rightarrow \A_{\mathscr{B}}$ commuting with the shift. Indeed,
it follows by (\ref{eq:aut3}) that, after composing with a power of the shift, if necessary,  for every $x,x'\in \A_{\mathscr{B}}$:
\begin{equation}\label{eq:aut4}
x\preccurlyeq x'\Rightarrow F(x)\preccurlyeq x'.
\end{equation}
In particular, $F(x)\preccurlyeq x$ for every $x\in \A_{\mathscr{B}}$.
Now, by the arguments form \cite{LRS} (see the proof of Lemma 4.2 there) we see that
if $F(x)\prec x$ for some $x\in \A_{\mathscr{B}}$, then (as $F$ is 1-1) we have an infinite chain
$$
\ldots \prec F^k(x) \prec \ldots \prec F^2(x) \prec F(x) \prec x,
$$
which is impossible if the support of $x$ is finite. Therefore, $F(x)=x$ for every $x$ with the finite support and hence for every $x\in \A_{\mathscr{B}}$, since the elements with finite support form a dense subset of  $\A_{\mathscr{B}}$.
\end{remark}

\begin{example} Let $\mathscr{B}=\{6,10,15q_1,15q_2,\ldots\}$, where $q_1,q_2,\ldots$ is an infinite sequence of pairwise coprime numbers that are coprime to $2,3,5$. Let $S=\{6,10\}$. It is elementary that the set $6\Z\cup (5+10\Z)$ is minimal among the sets of the form $(r+6\Z)\cup (s+10\Z)$.  Since $15\Z\subseteq 6\Z\cup (5+10\Z)$, the set $\Z\setminus (6\Z\cup (5+10\Z))$ is $\mathscr{B}$-admissible. The set $6\Z\cup (5+10\Z)$ satisfies the minimality condition, so it is an $S$-contour.
\end{example}

\begin{example}\label{ex:78} It may happen that a set $\mathscr{B}$  admits no $S$-contour for any finite subset $S$, but the set $\Z\setminus K_{{\bf r}}$ is $\mathscr{B}$-admissible for some ${\bf r}\in \Z^S$ and a finite set $S\subset\mathscr{B}$. Let
$$
\mathscr{B}=\{30, 66, 78, a=1430=2\cdot 5\cdot 11\cdot 13, cq_1, cq_2,\ldots\},
$$
where $c=5\cdot 11\cdot 13$ and $q_1,q_2,\ldots$ is an infinite sequence of pairwise coprime numbers that are coprime to $2,3,5,11,13$ and  $\sum_{i\ge 1}\frac{1}{q_i}<1$. Let $S=\{30, 66, 78, 1430\}$.

One checks that
\begin{equation}\label{eq:nonsyn1}
a\Z\subseteq K_{{\bf r}'}:=30\Z\cup (22+66\Z)\cup (26+78\Z),
\end{equation}
 where $r'=(r'_{30}=0,r'_{66}=22,r'_{78}=26, r'_a=0)$. Since  $K_{\bf r'}$ consists of even numbers and $cq_i$ are odd, the set $\Z\setminus K_{\bf r'}$ is not $\mathscr{B}$-admissible. One checks that
\begin{equation}\label{eq:nonsyn2}
cq_i\Z\subset c\Z\subset K_{\bf r}:=30\Z\cup (22+66\Z)\cup (26+78\Z)\cup (c+a\Z), \end{equation}
for every $i\in\N$. Here ${\bf r}=(r_{30}=0,r_{66}=22,r_{78}=26, r_a=c)$, so the set $\Z\setminus K_{\bf r}$ is $\mathscr{B}$-admissible, but $K_{\bf r}$ is not an $S$-contour, as $K_{\bf r'}$ is a proper subset of $K_{\bf r}$.

 We claim that there is no $S$-contour for any finite set $S$. Assume that $S$ is a finite subset of $\mathscr{B}$ and $K_{\bf r}$, where ${\bf r}\in\Z^S$,  is minimal. Assume moreover that  $\Z\setminus K_{\bf r}$ is $\mathscr{B}$-admissible. By Remark~\ref{rem:crit}, without loss of generality we can assume that $30,66,78, a\in S$. Thanks to the assumption  $\sum_{i\ge 1} 1/q_i<1$ we have
$$
r_{b_0}+b_0\Z\nsubseteqq \bigcup_{b\in S\setminus\{b_0\}}(r_b+b\Z)
$$
if $b_0\in\{30, 66, 78\}$. Indeed, suppose that the inclusion holds. Then
\begin{equation*}
\begin{split}
\frac{1}{b_0}&=d(r_{b_0}+b_0\Z)=d\left(\bigcup_{b\in S\setminus\{b_0\}}(r_b+b\Z)\cap(r_{b_0}+b_0\Z)\right)\leq\sum_{b\in S\setminus\{b_0\}}d((r_b+b\Z)\cap(r_{b_0}+b_0\Z))\\
&\leq\sum_{b\in S\setminus\{b_0\}}\frac{1}{\lcm(b,b_0)}=\sum_{b\in S\setminus\{b_0\}}\frac{\gcd(b,b_0)}{b\cdot b_0}.
\end{split}
\end{equation*}
So we have
\[1\leq\sum_{b\in S\setminus\{b_0\}}\frac{\gcd(b,b_0)}{b}.\]
But observe that if $b_0=78$ then the r.h.s. of the inequality is less than or equal to
$$
\frac{\gcd(30,78)}{30}+\frac{\gcd(66,78)}{66}+\frac{\gcd(a,78)}{a}+\sum_{i\ge 1}\frac{\gcd(cq_i,78)}{cq_i}=\frac15+\frac{1}{11}+\frac{1}{5\cdot 11}+\frac{1}{5\cdot 11}\sum_{i\ge 1}\frac{1}{q_i}<1.
$$
If $b_0=30$ or $b_0=66$, this upper bound is still smaller, so we obtain a contradiction.

 As $\Z\setminus K_{\bf r}$ is $\mathscr{B}$-admissible, an arithmetic progression with the difference $cq_{i}$ is contained in $K_{\bf r}$ for every $i$, in particular for some $i_0$ large enough to satisfy $\gcd(q_{i_0},\lcm(S))=1$. Say, $s+cq_{i_0}\Z\subseteq  K_{\bf r}$ for such $i_0$ and some $s\in \Z$. Since $K_{\bf r}$ is $\lcm(S)$-periodic, it follows that
$$
s+c\Z=s+cq_{i_0}\Z+\lcm(S)\Z\subseteq K_{\bf r}.
$$
Again using the assumption $\sum_{i\ge 1}1/q_i<1$ we show that $s+c\Z$ is not contained in any finite union of progressions  $r_{cq_j}+cq_j\Z$.
It follows that  some progression with the difference $cq$, where $\gcd(q,\lcm(S))=1$, is contained in
$$
(r_{30}+30\Z)\cup  (r_{66}+66\Z)\cup (r_{78}+78\Z)\cup (r_{a}+a\Z).
$$
As before, by the $\lcm(S)$-periodicity of the latter set, we conclude that it contains a progression with the difference $c$:
 $$
 s'+c\Z\subseteq (r_{30}+30\Z)\cup  (r_{66}+66\Z)\cup (r_{78}+78\Z)\cup (r_{a}+a\Z)
$$
for some $s'\in\Z$. Since $s'+c\Z=(s'+a\Z)\cup (s'+c+a\Z)$ and one of the progressions $s'+a\Z, s'+c+a\Z$ is disjoint with $r_{a}+a\Z$ (since $c$ is odd), then
$(r_{30}+30\Z)\cup  (r_{66}+66\Z)\cup (r_{78}+78\Z)$ contains a progression of the form $s''+a\Z$ (with $s''=s'$ or $s''=s'+c$). One checks that this is possible only when the numbers $r_{30}, r_{66}, r_{78}, s''$ have the same parity. The minimality of $K_{\bf r}$ implies that $(r_{30}+30\Z)\cup  (r_{66}+66\Z)\cup (r_{78}+78\Z)$ contains $r_a+a\Z$ (since it contains $s''+a\Z$).  But then, as the numbers $cq_i$ are odd,  we observe that $\Z\setminus K_{\bf r}$ is not $\mathscr{B}$-admissible, as  $r_{30}, r_{66}, r_{78}$ have the same parity.
\end{example}




\subsection{When $\A_{\mathscr{B}}$ has only trivial automorphisms}\label{sec:onlytrivial}

We  give some necessary conditions for $\A_{\mathscr{B}}$ to have trivial automorphism group.

We say that a set $\mathscr{B}\subseteq\N\setminus\{1\}$ is {\em spiky} if every finite subset $S$ of $\mathscr{B}$ is saturated in $\mathscr{B}$, that is, if $S$ is a finite subset of $\mathscr{B}$ and  $b|\lcm(S)$ for some $b\in\mathscr{B}$, then $b\in S$. Clearly, if $\mathscr{B}$ is coprime, then it is spiky. Moreover, every spiky set is primitive.

\begin{lemma}\label{lem:incl} Assume that $\mathscr{B}$ is spiky.
Let $S\subset \mathscr{B}$ be finite and $b_*\in\mathscr{B}$. If
$$
s+b_*\Z\subseteq \bigcup_{b\in S}(r_b+b\Z)
$$
for some $s,r_b\in\Z$, then $b_*\in S$.
\end{lemma}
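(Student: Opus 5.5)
The plan is to argue by contradiction: assume $b_*\notin S$. First I reduce the covering of $s+b_*\Z$ to a covering of a coarser progression by periodicity, and then I build an explicit element of that progression that avoids every class $r_b+b\Z$, using the Chinese Remainder Theorem prime by prime. Spikiness enters twice. It turns "$b_*\notin S$" into a prime-power obstruction. It also gives every $b\in S$ a private prime power that no other member of $S$ reaches.

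\emph{Step 1 (reduction).} Put $L=\lcm(S)$ and $K=\bigcup_{b\in S}(r_b+b\Z)$. The set $K$ is $L$-periodic, so $s+b_*\Z\subseteq K$ gives
$$
s+g\Z=s+b_*\Z+L\Z\subseteq K, \qquad g:=\gcd(b_*,L).
$$
Since $\mathscr{B}$ is spiky and $b_*\notin S$, we have $b_*\nmid L$. Hence $g$ is a proper divisor of $b_*$. No $b\in S$ divides $g$, because such a $b$ would divide $b_*$ with $b\neq b_*$, contradicting primitivity (every spiky set is primitive). So it suffices to prove the following claim.

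\emph{Claim.} If $\mathscr{B}$ is spiky, $S\subseteq\mathscr{B}$ is finite, $g\mid\lcm(S)$ and no $b\in S$ divides $g$, then $s+g\Z\not\subseteq\bigcup_{b\in S}(r_b+b\Z)$.

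\emph{Step 2 (private prime powers).} For each $b\in S$, spikiness applied to the finite set $S\setminus\{b\}$ gives $b\nmid\lcm(S\setminus\{b\})$. So there is a prime $p_b$ with
$$
e_b:=v_{p_b}(b)>v_{p_b}(\lcm(S\setminus\{b\})).
$$
Thus $e_b=v_{p_b}(L)$, and $b$ is the unique element of $S$ attaining this top exponent at $p_b$. The same prime can serve several $b$'s only if their top exponents coincide, which uniqueness forbids. Hence $b\mapsto p_b$ is injective.

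\emph{Step 3 (avoiding each class at its private prime).} Since $g\mid L$, we have $v_{p_b}(g)\le e_b$.

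If $v_{p_b}(g)<e_b$, then as $t$ varies the element $s+gt$ takes at least $p_b\ge 2$ distinct values modulo $p_b^{e_b}$. So I can choose $t$ modulo $p_b^{e_b-v_{p_b}(g)}$ with $s+gt\not\equiv r_b \pmod{p_b^{e_b}}$, and then $s+gt\notin r_b+b\Z$. Because the primes $p_b$ are distinct, the Chinese Remainder Theorem lets me impose all these conditions on $t$ simultaneously.

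If $v_{p_b}(g)=e_b$, the private prime is useless for $b$. I then use that $b\nmid g$ to find some prime $q$ with $v_q(b)>v_q(g)$, and avoid $r_b$ modulo $q^{v_q(b)}$ instead.

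\emph{Main obstacle.} The hard point is the second case of Step 3. The prime $q$ found there need not be private to $b$, so the avoidance conditions imposed at $q$ for different elements of $S$ might conflict. My first attempt would be an induction on $|S|$, treating such $b$ last. At the prime $q$, I would order the competing elements of $S$ by $v_q$ and avoid their classes from the largest exponent downward. Each forbidden class modulo $q^{k}$ removes only a $1/q$ proportion of the free $q$-adic digit at that level, so a good residue should remain. If this bookkeeping becomes unwieldy, the fallback is a density estimate in the spirit of Example~\ref{ex:78}, restricted to the $q$-primary parts. Once the Claim is established, Step 1 gives the desired contradiction, so $b_*\in S$.
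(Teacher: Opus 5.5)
\textbf{There is a genuine gap.} Your Step 1 reduction is correct, but the Claim you reduce to is false. So Steps 2--3 cannot be completed, and the ``main obstacle'' you flag is a real obstruction, not bookkeeping.

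Take $\mathscr{B}=S=\{6,10\}$, which is spiky since $6\nmid 10$ and $10\nmid 6$. Put $g=15$, $s=0$, $r_6=0$, $r_{10}=5$. Then $g\mid 30=\lcm(S)$ and neither $6$ nor $10$ divides $15$. Yet $15\Z=30\Z\cup(15+30\Z)\subseteq 6\Z\cup(5+10\Z)$; this is the same inclusion used in the paper's example with $\mathscr{B}=\{6,10,15q_1,15q_2,\ldots\}$.

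In your Step 3, the private primes $3$ and $5$ are both already saturated by $g$. So both elements fall into your second case with $q=2$, and the two conditions $s+gt\not\equiv 0$ and $s+gt\not\equiv 1 \pmod 2$ are incompatible. With $q=2$, two forbidden classes exhaust everything, so no ordering or density argument can rescue this.

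The source of the failure is that you use spikiness only on subsets of $S$, and for $b_*$ only in the weak form $b_*\nmid\lcm(S)$. Passing from $b_*$ to $g$ throws away the essential fact that $b_*$ itself lies in the spiky set. In the example, any $b_*$ with $\gcd(b_*,30)=15$ makes $\{6,10,b_*\}$ non-spiky, because $10\mid\lcm(6,b_*)$.

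\textbf{A repair.} Your CRT strategy works once spikiness is applied to a set containing $b_*$. For $b\in S$ (with $b_*\notin S$, so $b\neq b_*$), apply spikiness to the finite set $(S\setminus\{b\})\cup\{b_*\}$. This gives $b\nmid\lcm((S\setminus\{b\})\cup\{b_*\})$. Hence there is a prime $p_b$ such that $e_b=v_{p_b}(b)$ exceeds the $p_b$-adic valuation of $b_*$ and of every other element of $S$.

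The map $b\mapsto p_b$ is then injective. Also $v_{p_b}(b_*)<e_b$, so as $t$ runs modulo $p_b^{e_b-v_{p_b}(b_*)}$, the values $s+b_*t$ run injectively through at least $p_b\ge 2$ classes modulo $p_b^{e_b}$. You can therefore avoid $r_b \bmod p_b^{e_b}$ for each $b$, and CRT combines these conditions. The resulting element of $s+b_*\Z$ lies outside every $r_b+b\Z$. Your second case never arises, and the reduction to $g$ becomes unnecessary.

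\textbf{The paper's route.} The paper instead inducts on $|S|$. Removing $b_0\in S$, the part of $s+b_*\Z$ not covered by $S'=S\setminus\{b_0\}$ is nonempty by induction and $\lcm(S'\cup\{b_*\})$-periodic. It therefore contains a full progression with that difference, which must lie in $r_{b_0}+b_0\Z$. Hence $b_0\mid\lcm(S'\cup\{b_*\})$, and spikiness forces $b_0=b_*$.

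Both correct arguments hinge on applying spikiness to a set containing $b_*$, which is exactly what your Claim discards.
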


\begin{proof}
Induction on $|S|$. The statement is clear when $|S|=1$, since $
s+b_*\Z\subseteq r_b+b\Z$ implies $b|b_*$. Assume that $k>0$ and the lemma is true for sets $S$ of cardinality less than  $k$ and suppose that $|S|=k$. Choose arbitrary $b_0\in S$ and let $S'=S\setminus\{b_0\}$.
Then either $b_*\in S'$ or
$$
s'+L\Z\subseteq (s+b_*\Z)\setminus \bigcup_{b\in S'}(r_b+b\Z),
$$
for some $s'\in\Z$, where $L=\lcm(S'\cup\{b_*\})$, because,  if $b_*\notin S'$, the r.h.s set is nonempty (by the inductive hypothesis) and  $L$-periodic. Thus, if
$$
s+b_*\Z\subseteq \bigcup_{b\in S}(r_b+b\Z)
$$
and $b_*\notin S'$, then
$$s'+L\Z\subseteq \bigcup_{b\in S}(r_b+b\Z)\setminus \bigcup_{b\in S'}(r_b+b\Z)\subseteq r_{b_0}+b_0\Z,$$ hence $b_0|L$. Thus $b_0\in S'\cup\{b_*\}$, as $\mathscr{B}$ is spiky. Since $b_0\notin S'$, it follows that $b_0=b_*$, hence $b_*\in S$.
\end{proof}

\begin{Cor}\label{cor:spiky}
If $\mathscr{B}$ is spiky and infinite, then $\A_{\mathscr{B}}$ admits only trivial automorphisms.
\end{Cor}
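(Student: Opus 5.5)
By Theorem~\ref{thm:main}, (a)$\Rightarrow$(c), it is enough to show that no finite $S\subseteq\mathscr{B}$ admits an $S$-contour. Then condition (c) fails, so $\A_{\mathscr{B}}$ has no nontrivial automorphism. The argument is by contradiction, using Lemma~\ref{lem:incl} to compare progressions with differences in $\mathscr{B}\setminus S$ against the finite union $K_{\bf r}$.

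Suppose $S\subseteq\mathscr{B}$ is finite and $K_{\bf r}=\bigcup_{b\in S}(r_b+b\Z)$ is an $S$-contour. Since $\mathscr{B}$ is infinite, we can pick $b_*\in\mathscr{B}\setminus S$. By condition (b) of Definition~\ref{def:contour}, $\Z\setminus K_{\bf r}$ is $b_*$-admissible. So some residue class $s+b_*\Z$ misses $\Z\setminus K_{\bf r}$, which means $s+b_*\Z\subseteq K_{\bf r}=\bigcup_{b\in S}(r_b+b\Z)$. Lemma~\ref{lem:incl}, which applies because $\mathscr{B}$ is spiky, then gives $b_*\in S$. This contradicts the choice of $b_*$. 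Hence no $S$-contour exists for any finite $S$.

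The minimality condition (a) in the definition of a contour is not even needed here. It also suffices, via Remark~\ref{rem:crit}, to note that for every finite $S$ and every ${\bf r}\in\Z^S$ the set $\Z\setminus K_{\bf r}$ fails to be $\mathscr{B}$-admissible, witnessed by any $b_*\notin S$. The only point needing care is that $\Z\setminus K_{\bf r}$ being $b_*$-admissible really produces a full residue class modulo $b_*$ inside $K_{\bf r}$. That follows directly from the definition $|A\bmod b_*|<b_*$. All the substantive combinatorics has already been done in Lemma~\ref{lem:incl}, so I expect no real obstacle. Invoking Theorem~\ref{thm:main} requires $\mathscr{B}$ to be primitive, and the paper notes that every spiky set is primitive.
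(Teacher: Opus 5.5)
Your proof is correct and is essentially the paper's own argument: pick $b_*\in\mathscr{B}\setminus S$, use $b_*$-admissibility of $\Z\setminus K_{\bf r}$ to get $s+b_*\Z\subseteq K_{\bf r}$, and contradict Lemma~\ref{lem:incl}, then conclude via (a)$\Rightarrow$(c) of Theorem~\ref{thm:main}. Your side remarks are also accurate: the minimality condition is never used, and spiky sets are primitive, so Theorem~\ref{thm:main} applies.
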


\begin{proof}
We shall prove that there is no $S$-contour for any finite set $S\subseteq\mathscr{B}$. Assume for the contrary that $K_{\bf r}$ is an $S$-contour for some finite $S$ and ${\bf r}\in\Z^S$. Take $b\in\mathscr{B}\setminus S$.  As $\Z\setminus K_{\bf r}$ is $\mathscr{B}$-admissible, it misses at least one residue class modulo $b$, so $s+b\Z\subseteq K_{\bf r}$ for some $s\in Z$, which is impossible by Lemma \ref{lem:incl}.
\end{proof}

\begin{Cor}\label{cor:coprime}
If $\mathscr{B}$ contains an infinite coprime set, then $\A_{\mathscr{B}}$ admits only trivial automorphisms.
\end{Cor}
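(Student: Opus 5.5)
We prove Corollary~\ref{cor:coprime} by contradiction, using the implication (a)$\Rightarrow$(c) of Theorem~\ref{thm:main}. Suppose $\A_{\mathscr{B}}$ has a nontrivial automorphism. Then there is a finite $S\subseteq\mathscr{B}$ and ${\bf r}\in\Z^S$ such that $K_{\bf r}$ is an $S$-contour. In particular, $\Z\setminus K_{\bf r}$ is $\mathscr{B}$-admissible.

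Let $\{c_1,c_2,\ldots\}\subseteq\mathscr{B}$ be an infinite pairwise coprime subset. Only finitely many $c_i$ share a prime factor with $L:=\lcm(S)$: each prime divisor of $L$ divides at most one $c_i$, by pairwise coprimality. Fix $c=c_i\notin S$ with $\gcd(c,L)=1$.

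Since $\Z\setminus K_{\bf r}$ is $c$-admissible, it misses some residue class modulo $c$. Hence $s+c\Z\subseteq K_{\bf r}$ for some $s\in\Z$. The set $K_{\bf r}$ is $L$-periodic, so this gives
$$
s+c\Z+L\Z\subseteq K_{\bf r}.
$$
Because $\gcd(c,L)=1$, we have $c\Z+L\Z=\Z$, and therefore $K_{\bf r}=\Z$.

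This contradicts the definition of a contour. Condition (a) of Definition~\ref{def:contour} (minimality) forces $\Z\setminus K_{\bf r}\neq\emptyset$, as noted in its footnote. The reason is that the minimal sets among the unions $\bigcup_{b\in S}(r'_b+b\Z)$ have density at most
$$
\sum_{b\in S}\frac1b-\ldots<1
$$
under suitable choices. More directly: if $K_{\bf r}=\Z$, then $\Z\setminus K_{\bf r}=\emptyset$, and the footnote's claim that (a) implies nonemptiness rules this out.

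The main obstacle is making the nonemptiness rigorous without invoking the Schinzel-type open problem. To avoid that dependence, we would instead run the argument with the minimality of $K_{\bf r}$ together with the fact that every $b\in S$ satisfies $b\ge 2$. Concretely, we use that some choice ${\bf r}'$ (for instance, all $r'_b=0$) gives $\bigcup_{b\in S}b\Z\not\ni 1$, which is a proper subset of $\Z$. Minimality then means $K_{\bf r}$ is not strictly larger than any such union, and in particular $K_{\bf r}\neq\Z$, since $\Z$ strictly contains $\bigcup_{b\in S}b\Z$.

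An alternative route, if preferred, is to take the contour $S$-version and enlarge $S$ by Remark~\ref{rem:crit} to include $c$. Then $r_c+c\Z\subseteq K_{\bf r}$ is covered by the other progressions, and the same periodicity argument again yields $K_{\bf r}=\Z$, contradicting minimality.
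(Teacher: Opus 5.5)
Your argument is correct and is essentially the paper's proof. You assume an $S$-contour exists and pick an element $c$ of the coprime set with $\gcd(c,\lcm(S))=1$. Some residue class modulo $c$ then lies in the $\lcm(S)$-periodic set $K_{\bf r}$, and $c\Z+\lcm(S)\Z=\Z$ forces $K_{\bf r}=\Z$, contradicting the nonemptiness of $\Z\setminus K_{\bf r}$.

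Your final justification of that nonemptiness is exactly what the footnote to Definition~\ref{def:contour} relies on: $\bigcup_{b\in S}b\Z$ misses $1$, so $\Z$ is never minimal. You can therefore delete the density heuristic, which is also not valid as written. The worry about Schinzel's conjecture can go too, since that problem concerns arbitrary unions, not minimal ones.
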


\begin{proof}
 We shall prove that there is no $S$-contour for any finite set $S\subseteq\mathscr{B}$. Assume for the contrary that $K_{\bf r}$ is an $S$-contour for some finite $S$ and ${\bf r}\in\Z^S$ and take $b\in\mathscr{B}$ that is coprime to $\lcm(S)$.   As $\Z\setminus K_{\bf r}$ is $\mathscr{B}$-admissible, it misses at least one residue class modulo $b$, so $s+b\Z\subseteq K_{\bf r}$ for some $s\in \Z$. But this is impossible, since the nonempty set $\Z\setminus K_{\bf r}$ is a union of arithmetic progressions with the difference $\lcm(S)$. It follows that $(s+b\Z)\cap (\Z\setminus K_{\bf r})\neq\emptyset$ for every $s\in\Z$.
\end{proof}

\begin{remark}
\begin{enumerate}
\item[(a)] A class of examples of minimal $\mathscr{B}$-free systems $X_{\eta_{\mathscr{B}}}$ admitting nontrivial automorphisms is given in \cite{DKK}, Section 3.4. They are defined by the sets $\mathscr{B}=\mathscr{B}_1^N$ of the form
$$
\{2^kc_k:k\in\N\}\cup\{2^{k-1}c_k^2:k<N\},
$$
where $c_1,\ldots,c_k,\ldots$ are pairwise coprime and $N\in\N\cup\{\infty\}$. It is shown in \cite[Proposition 3.23]{DKK} that if $N\ge 1$, then there are non-trivial automorphisms of $X_{\eta_{\mathscr{B}_1^N}}$.
By the results of \cite{Dymek} $X_{\eta_{\mathscr{B}_1^1}}$ admits only trivial automorphisms. The set $\mathscr{B}_1^1$ is spiky, so $\A_{\mathscr{B}_1^1}$ admits only trivial automorphism.
On the other hand,
the automorphism group of the hereditary closure of  $X_{\eta_{\mathscr{B}_1^N}}$ contains the automorphism group of the full shift as a subgroup by Corollary \ref{thm:main1}.
\item[(b)] The set $\mathscr{B}$ from Example \ref{ex:78} is not spiky, does not contain an infinite coprime set and the $\mathscr{B}$-admissible subshift admits no non-trivial automorphisms, as there is no $S$ contour for any finite subset $S$ of $\mathscr{B}$.
\item[(c)] If the set  $\mathscr{B}$ is finite, then the automorphism groups of  both $\widetilde{X_{\eta_{\mathscr{B}}}}$ and $\A_{\mathscr{B}}$  contain a subgroup isomorphic to the group of automorphisms of the full shift by Theorem  \ref{thm:main2} and Theorem \ref{thm:main},  whereas  $X_{\eta_{\mathscr{B}}}$ admits no non-trivial automorphisms.
\end{enumerate}
\end{remark}

\subsection{Proof of the implication (c)$\Rightarrow$(b) in Theorem \ref{thm:main}}\label{sec:cb}

The implication (c)$\Rightarrow$(b) of Theorem \ref{thm:main} follows immediately from Proposition \ref{prop:hereditaryperiodic} and the following proposition.

\begin{proposition} \label{prop:whencontour}
Assume that $S$ is a finite subset of $\mathscr{B}$ and $K_{{\bf r}}=\bigcup_{b\in S}(r_b+b\Z)$ is an $S$-contour.
There exist blocks $D_{0,0},D_{0,1},D_{1,0},D_{1,1},C$ satisfying the conditions (a), (b), (c) and (d) in Proposition \ref{prop:hereditaryperiodic} with $Y=\A_{\mathscr{B}}$ for some $\ell$.
\end{proposition}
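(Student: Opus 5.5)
The plan is to build all five blocks out of the $L$-periodic set $W:=\Z\setminus K_{\bf r}$, where $L=\lcm(S)$, and then check (a)--(d) of Proposition~\ref{prop:hereditaryperiodic}. By Definition~\ref{def:contour}(b), $W$ is $\mathscr{B}$-admissible, and it is nonempty by the footnote there. Two facts drive the argument. First, $\A_{\mathscr{B}}$ is hereditary, because a subset of a $\mathscr{B}$-admissible set is $\mathscr{B}$-admissible. So replacing $1$'s by $0$'s never leaves $\A_{\mathscr{B}}$, and it suffices to control the positions where we create new $1$'s. Second, a forcing lemma, which I would prove first.

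\emph{Forcing lemma.} Let $M$ be a multiple of $L$ with $M\ge L$. Suppose $x\in\A_{\mathscr{B}}$ satisfies $x[n+1,n+M]=\raz_W[1,M]$ for some $n$. Then $\supp x\subseteq W+n$.

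To prove it, fix $b\in S$. The set $\supp x$ is $b$-admissible, so it misses some class $r'_b+b\Z$. Since $M\ge L$ and $W$ is $L$-periodic, the window $[n+1,n+M]$ contains a full period of $W+n$. Hence $r'_b+b\Z$ is also disjoint from $W+n$, that is, $r'_b+b\Z\subseteq K_{\bf r}+n$. Taking the union over $b\in S$ gives $K_{{\bf r}'}\subseteq K_{\bf r}+n$. Minimality is invariant under translation, so $K_{\bf r}+n$ is again minimal, and therefore $K_{{\bf r}'}=K_{\bf r}+n$. Consequently $\supp x\subseteq \Z\setminus K_{{\bf r}'}=W+n$. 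This is exactly where condition (a) of Definition~\ref{def:contour} enters, and I expect this step to be the main obstacle. In particular, one must make sure that the window really sees every residue class used by $W+n$ modulo each $b$, which is why $M$ is taken to be a multiple of $L$ that is at least $L$.

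\emph{Construction.} Fix a multiple $M\ge 2L$ of $L$, translate so that $0\in W$, and set $\ell=3M$ and $P=\raz_W[1,M]$. Every window of length $L$ meets $W$, so $P$ has $1$'s among both its first $L$ and its last $L$ positions. Choose two distinct points $p,q\in W\cap[M+1,2M]$. Define $D_{\varepsilon,\delta}$ as $\raz_W[1,3M]$ with the entries at positions $p$ and $q$ replaced by $\varepsilon$ and $\delta$. Thus $D_{\varepsilon,\delta}$ begins and ends with $P$, and $D_{\varepsilon,\delta}\preccurlyeq \raz_W[1,\ell]$. Put $C=10^{\ell-1}$.

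\emph{Condition (d).} Since $L\mid 2\ell$ and $0\in W$, the periodic sequence $\ldots CD_{0,0}CD_{0,0}\ldots$, placed so that each copy of $C$ starts at a multiple of $2\ell$, is coordinatewise at most $\raz_W$. Hence its support lies in $W$, and it belongs to $\A_{\mathscr{B}}$.

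\emph{Condition (b).} Suppose $x\in\A_{\mathscr{B}}$ contains $D_{\varepsilon,\delta}$ at position $n$. Then $x$ contains $P=\raz_W[1,M]$ at position $n$, so the forcing lemma gives $\supp x\subseteq W+n$. Now exchange the block for $D_{\varepsilon',\delta'}$. The only possible new $1$'s are at $n+p$ and $n+q$, which lie in $W+n$. So the new support is still contained in the $\mathscr{B}$-admissible set $W+n$, and the modified sequence lies in $\A_{\mathscr{B}}$.

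\emph{Condition (a).} The four blocks $D_{\varepsilon,\delta}$ differ from one another at position $p$ or $q$. Each $D_{\varepsilon,\delta}$ differs from $C$ because $D_{\varepsilon,\delta}$ has a $1$ in its last $L$ positions, where $C$ has only zeros.

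\emph{Condition (c).} I would verify the stronger statement that $C$ does not overlap $D_{\varepsilon,\delta}C$ or $C D_{\varepsilon,\delta}$ nontrivially. Any occurrence of $10^{\ell-1}$ starting strictly inside a $D$-block in $D_{\varepsilon,\delta}C$ would require the terminal copy of $P$ and the leading $1$ of the following $C$ to lie under zeros, which is impossible. Similarly, an occurrence starting at position $m+1$ with $0<m<\ell$ inside $CD_{\varepsilon,\delta}$ must begin with a $1$ of $D_{\varepsilon,\delta}$ and then has zeros covering the final $P$-block of $D_{\varepsilon,\delta}$, which contains a $1$; this is a contradiction. Here the assumption $M\ge 2L$ guarantees that the $1$'s of $P$ near both ends are available.
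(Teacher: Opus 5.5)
Your route is the paper's. The forcing lemma is exactly the paper's verification of (b): for each $b\in S$, a class missed by $\supp x$ is read off on a window containing a full period; the resulting union of classes lies inside the translated contour; minimality forces equality; and heredity of $\A_{\mathscr{B}}$ finishes. Your blocks are also of the same kind as the paper's: $C=10^{\ell-1}$, and $D_{\varepsilon,\delta}\preccurlyeq PPP$ with two free positions of $W$ in the middle third (cf. the remark after the paper's proof). Keeping $\raz_W$ in the middle third and taking $M\ge 2L$ just replaces the paper's case split $k=\lcm(S)$ versus $k=2\lcm(S)$.

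There is, however, an off-by-one in (d) that must be repaired. If each $C$ starts at $2\ell j$, then $D_{0,0}$ occupies $[2\ell j+\ell,\,2\ell j+2\ell-1]$. So $D_{0,0}[i]$ is compared with $\raz_W[i-1]$, not with $\raz_W[i]$.

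Your own forcing lemma shows this is not cosmetic. Suppose $D_{0,0}$ sits on $[n+1,n+\ell]$ in a point of $\A_{\mathscr{B}}$. Then the support is contained in $W+n$. The leading $1$ of the preceding $C$ is at $n-\ell+1$, so $1-\ell\in W$, i.e. $1\in W$ since $L\mid\ell$. Hence $\ldots CD_{0,0}CD_{0,0}\ldots\in\A_{\mathscr{B}}$ if and only if $1\in W$, and your normalization $0\in W$ does not give this. For example, take the contour $(2+6\Z)\cup(1+10\Z)$ for $\mathscr{B}=\{6,10,15q_1,15q_2,\ldots\}$, a translate of the one in the paper's example: its complement contains $0$ but not $1$.

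To fix it, normalize so that $1\in W$, i.e. so that $P$ begins with $1$. This is exactly why the paper chooses $n$ with $n+1\notin K_{\bf r}$. Then put each $C$ on $[2\ell j+1,2\ell j+\ell]$.

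Separately, your argument for $CD_{\varepsilon,\delta}$ in (c) is misdirected. For $0<m<\ell$ the window $(CD_{\varepsilon,\delta})[m+1,m+\ell]$ starts at $C[m+1]=0$, so it cannot equal $C$. The case $m=\ell$ is excluded by (a). So this half of (c) is immediate and needs no appeal to the final $P$-block.
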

\begin{proof}
Let $n$ be an integer such that $n+1\notin K_{{\bf r}}$, so $\eta_{{\bf r}}[n+1]=1$,
where
\begin{equation}\label{eq:etar}
\eta_{{\bf r}}:=\raz_{\Z\setminus K_{{\bf r}}}.
\end{equation}

Assume first that the block $\eta_{{\bf r}}[1,\lcm(S)]$ has at least two nonzero terms. Let $a\in\{2,3,\ldots,\lcm(S)\}$ be such that $\eta_{{\bf r}}[n+a]=1$.
Notice that $\eta_{{\bf r}}$ is $\lcm(S)$-periodic.
Put $k:=\lcm(S)$, $\ell:=3k$ and
\[B_{\bf r}:=\eta_{{\bf r}}[n+1,n+k].\]
In that case set

$$
\begin{array}{l}
D_{\varepsilon,\delta}:=B_{\bf r}\varepsilon\underbrace{0\ldots 0}_{a-2}\delta \underbrace{0\ldots 0}_{k-a}B_{\bf r},\\
C=1\underbrace{0\ldots 0}_{\ell-1}.
\end{array}
$$

If the block $\eta_{{\bf r}}[1,k]$ has exactly one nonzero term we modify the above definitions by setting $B_{\bf r}=\eta_{\bf r}[n+1,n+2k]$ and $k=2\lcm(S)$.

 We verify that the blocks $D_{0,0},D_{0,1},D_{1,0},D_{1,1},C$ indicated above satisfy the  conditions (a), (b), (c) and (d) in Proposition \ref{prop:hereditaryperiodic}.

(a) is immediate due to the fact that $D_{\varepsilon, \delta} [k+1] = \varepsilon, D_{\varepsilon, \delta}[k+a] = \delta, D_{\varepsilon, \delta}[2k+1]=1$  and $C[2k+1] = 0$.

(b) Assume that $y\in\A_{\mathscr{B}}$ and $y[m+1,m+\ell]=D_{\varepsilon,\delta}$ for some $m\in\Z$ and $\varepsilon,\delta\in\{0,1\}$. Then
\begin{equation}\label{eq:yieta}
y[m+1,m+k]=\eta_{{\bf r}}[n+1,n+k].
 \end{equation}
 After replacing $y$ by $\sigma^{m-n}(y)$ we can assume that  $y[n+1,n+k]=\eta_{{\bf r}}[n+1,n+k]$ (that is, $m=n$). Let $\varepsilon',\delta'\in\{0,1\}$. We claim that $y'\in\A_{\mathscr{B}}$, where $y'$ is the sequence defined by $y'[n+1,n+\ell]=D_{\varepsilon',\delta'}$ and $y'[s]=y[s]$ for $s\in\Z\setminus [n+1,n+\ell]$. 
 Since $y$ is $\mathscr{B}$-admissible, in particular it is $S$-admissible, there exist $s_b\in\Z$ for $b\in S$ such that $s_b+b\Z$ is disjoint with the support of $y$ for every $b\in S$, so, by (\ref{eq:yieta}), $$(s_b+b\Z)\cap (\Z\setminus K_{\bf r})\cap [n+1,n+k]=\emptyset.$$
 That means that $K_{\bf s}\cap [n+1,n+k]\subseteq K_{\bf r})\cap [n+1,n+k]$, where ${\bf s}=(s_b)_{b\in S}$. As the sets $K_{\bf s}$, $K_{\bf r}$ are $k$-periodic, it follows that $K_{\bf s}\subseteq K_{\bf r}$. As $K_{\bf r}$ is an $S$-contour, it follows by the condition (a) in Definition~\ref{def:contour} that $K_{\bf s}= K_{\bf r}$, so the support of $y$ is contained in $\Z\setminus K_{\bf r}$. In other words, $y\preccurlyeq \eta_{\bf r}$ coordinatewise. It follows immediately that $y'\preccurlyeq \eta_{\bf r}$ and, since $\eta_{\bf r}\in\A_{\mathscr{B}}$ (the condition (b) of Definition \ref{def:contour}) and $\A_{\mathscr{B}}$ is hereditary, $y'\in\A_{\mathscr{B}}$. The claim follows.

 The property (c) is to be checked directly, whereas (d) is a consequence of $\eta_{\bf r}\in\A_{\mathscr{B}}$ and the fact that every sequence as in (d) is less than or equal (coordinatewise) to $\sigma^{m}(\eta_{\bf r})$ for some $m\in\Z$.
\end{proof}

\begin{remark}
There are many possible  choices of blocks $D_{0,0},D_{1,0},D_{0,1}, D_{1,1}, C$ which satisfy conditions of Proposition~\ref{prop:hereditaryperiodic} and can be used in the proof of Proposition \ref{prop:whencontour}.
For example, any choice such that
$$
B_{{\bf r}}\varepsilon \underbrace{0\ldots 0}_{a-2}\delta\underbrace{0\ldots 0}_{k-a}B_{{\bf r}}\preccurlyeq D_{\varepsilon,\delta}\preccurlyeq B_{{\bf r}}B_{{\bf r}}B_{{\bf r}}
$$
does the job.
\end{remark}
\begin{remark}\label{rem:sofic}
Assume that $
\eta_{\bf r}$ is as in (\ref{eq:etar}) for an $S$-contour $K_{\bf r}$.
One can prove that the set
$$
X_{{\bf r}}=\{z\in\{0,1\}^{\Z}:z\preccurlyeq \sigma^j(\eta_{\bf r})\;\text{for some}\;j\in\Z\}
$$
is sofic and it is equal to the closure of the open set
$$
U_{{\bf r}}=\{z\in\A_{\mathscr{B}}: z[j+1,j+\lcm(S)]=\eta_{\bf r}[1,\lcm(S)]\;\text{for some}\;j\in\Z\}.
$$
{Moreover, the block $\eta_{\bf r}[1,\lcm(S)]$ is a synchronizing word in $X_{{\bf r}}$ and also in the whole subshift $\A_{\mathscr{B}}$. }
As the automorphism group of every uncountable sofic subshift contains the automorphism group of the full shift \cite{Sal}, it follows that the assertions  (a), (b), (c) of Theorem \ref{thm:main} are equivalent to the fact that $\A_{\mathscr{B}}$ contains a nonempty open $\sigma$-invariant subset $U$ such that the closure of $U$ is sofic.  Note that the subshift $\widetilde{X_{\eta_{\mathscr{B}}}}$ is not sofic and admits no synchronizing words, provided the set $\mathscr{B}$ is infinite and taut, see Corollary \ref{not_sofic}. {Also the subshift $\A_{\mathscr{B}}$ is not sofic for infinite $\mathscr{B}$ (Proposition \ref{lem:whensofic}), although it may admit synchronizing words, as noted above.}
\end{remark}



\section{Sofic property of subshifts defined by sets of multiples}

For any subshift $X$, we denote by $\mathcal{L}(X)$ its language, that is, the set of all finite blocks appearing in points of $X$. A block $C\in \mathcal{L}(X)$ is a {\em predecessor} (resp. {\em follower}) of $B\in\mathcal{L}(X)$ if  $CB\in \mathcal{L}(X)$ (resp. $BC\in\mathcal{L}(X)$). We denote the set of the predecessors (resp. followers) of a block $B$ by $Pred_X(B)$ (resp. $Foll_X(B)$), or just $Pred(B)$, $Foll(B)$ if the subshift $X$ is fixed. Weiss in~\cite{Weiss} proved that a subshift $X$ is sofic if and only if the range of the map \[\mathcal{L}(X)\ni B\mapsto Pred_X(B)\] is finite. The condition is equivalent to the finiteness of the range of the map \[\mathcal{L}(X)\ni B\mapsto Foll_X(B).\]
Recall that a block $B\in\mathcal{L}(X)$ is called \emph{synchronizing} if, for any $C,C'\in\mathcal{L}(X)$ such that $CB,BC'\in\mathcal{L}(X)$, one also has
\[
CBC'\in\mathcal{L}(X).
\]
Every sofic subshift admits synchronizing blocks. We show that in the subshifts containing the $\mathscr{B}$-free subshift and contained in its hereditary closure, there are no synchronizing words, see Proposition~\ref{taut_synch}. So these subshifts are not sofic, see Corollary~\ref{not_sofic}. Also the $\mathscr{B}$-admissible  subshift is not sofic, unless $\mathscr{B}$ is finite,  Proposition~\ref{lem:whensofic}. 


A specific example of hereditary subshift is so-called subordinate subshift which was introduced in \cite{MR3756340}. We say that a subshift $X\subset\{0,1\}^\Z$ is \emph{subordinate system under} $x\in X$ if we have \[X =\overline{\{\sigma^ny \colon y\preccurlyeq x,n\in\Z\}}.\]
 \begin{remark}\label{her_clos_subord}(\cite[Remark 5.4]{MR4938646})
     The hereditary closure of any $\mathscr{B}$-free subshift is a subordinate system under $\eta_{\mathscr{B}}$.
 \end{remark}
\begin{definition}
    For any $b\in\N$ a block $C$ is called \emph{$b$-full} if \[|\operatorname{supp} C \bmod b|=b-1.\]
\end{definition}
\begin{Th}({\cite[Corollary~4.32]{DKKL}})\label{eta_full}
    Let $\mathscr{B}$ be taut. Then for any $b\in\mathscr{B}$ we have
    \[|\operatorname{supp}\eta_{\mathscr{B}} \bmod b|=b-1.\]
\end{Th}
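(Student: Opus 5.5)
The inclusion $|\operatorname{supp}\eta_{\mathscr{B}}\bmod b|\le b-1$ is immediate, because $b\Z\subseteq\cM_{\mathscr{B}}$. For the reverse inequality I would argue by contradiction. Fix $b\in\mathscr{B}$ and a residue $r$ with $b\nmid r$, and suppose $r+b\Z\subseteq\cM_{\mathscr{B}}$. I will use two standard facts:
\begin{itemize}
\item[(i)] A taut set is primitive. If $b\mid b'$ with $b\ne b'$, then removing $b'$ does not change $\cM_{\mathscr{B}}$, so $\mathscr{B}$ would not be taut.
\item[(ii)] Hall's characterisation of tautness: $\mathscr{B}$ is taut if and only if there are no $c\in\N$ and Behrend set $A$ (that is, $\cM_A$ has logarithmic density $1$) with $cA\subseteq\mathscr{B}$.
\end{itemize}
The plan is to extract such a pair $(c,A)$ from the assumed inclusion.

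First I reduce to a coprime progression, exactly as in the proof of Corollary \ref{Cor_Dirichlet}. Put $g=\gcd(r,b)$, $r'=r/g$ and $b'=b/g$. Then $b'>1$ and $\gcd(r',b')=1$. The assumption says that $g\cdot u\in\cM_{\mathscr{B}}$ for every $u\in r'+b'\Z$. For each divisor $d\mid g$ let
$$
C_d=\Bigl\{\tfrac{b''}{d}:\ b''\in\mathscr{B},\ \gcd(b'',g)=d\Bigr\}.
$$
Then $gu\in\cM_{\mathscr{B}}$ holds if and only if $u$ is divisible by some element of $\bigcup_{d\mid g}C_d$; indeed $b''\mid gu$ if and only if $\tfrac{b''}{d}\mid\tfrac{g}{d}u$, and $\tfrac{b''}{d}$ is coprime to $\tfrac{g}{d}$. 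Since every $u\in r'+b'\Z$ is coprime to $b'$, only the elements of $C_d$ coprime to $b'$ can divide such $u$. Call these subsets $C_d^*$ and put $C^*=\bigcup_{d\mid g}C_d^*$. This gives $r'+b'\Z\subseteq\cM_{C^*}$.

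Next I show that $C^*$ is Behrend. The set $\cM_{C^*}$ is generated by integers coprime to $b'$. Multiplying a number coprime to $b'$ by a unit modulo $b'$ permutes the coprime residue classes, so the relative logarithmic density of $\cM_{C^*}$ is the same in every coprime class modulo $b'$. I would make this precise via the Davenport--Erdős description of the logarithmic density of sets of multiples, computed along the finite truncations of $C^*$. Since $\cM_{C^*}$ contains one full coprime class, it has relative density $1$ in all coprime classes. Every integer factors as (a $b'$-smooth part) times (a part coprime to $b'$), and $\cM_{C^*}$ is closed under multiplication, so this upgrades to $\cM_{C^*}$ having logarithmic density $1$. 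Thus $C^*$ is Behrend.

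Finally, $C^*$ is a finite union of the sets $C_d^*$ over $d\mid g$. By Behrend's inequality
$$
1-\delta(\cM_{A\cup A'})\ge\bigl(1-\delta(\cM_A)\bigr)\bigl(1-\delta(\cM_{A'})\bigr),
$$
some $C_d^*$ must itself be Behrend. Since $d\,C_d^*\subseteq\mathscr{B}$, this contradicts (ii). Hence every nonzero residue modulo $b$ meets $\cF_{\mathscr{B}}$.

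The main obstacle is the middle step, namely passing from "contains one coprime residue class" to "has density $1$". The equidistribution of $\cM_{C^*}$ across coprime classes modulo $b'$ has to be stated carefully for logarithmic density with infinite $C^*$. My first approach is to prove the statement for finite truncations, where the sets are periodic and the claim is an exact count, and then pass to the limit using the Davenport--Erdős theorem.
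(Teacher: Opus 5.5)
\textbf{Verdict.} Your argument is correct (up to two small fixes below), and it takes a different route from the paper. The paper does not prove this statement; it imports it from \cite{DKKL}. The closest in-house argument is the proof of Corollary~\ref{Cor_Dirichlet}, and hence of Proposition~\ref{pro1}, which only covers \emph{finite} primitive sets: one picks a prime $p>\max\cD$ in $r'+d_i'\Z$ and notes that $gp$ is $\cD$-free. That trick fails for infinite $\mathscr{B}$, because elements $dp$ with $d\mid g$ may lie in $\mathscr{B}$. Your sets $C_d$ are exactly the bookkeeping for such elements, and tautness then enters through Hall's criterion.

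\textbf{First fix: Hall's criterion.} Statement (ii) as you wrote it is false: $\{1\}$ is Behrend, and $c\{1\}\subseteq\mathscr{B}$ for every $c\in\mathscr{B}$. The criterion must exclude $A=\{1\}$ (for primitive $\mathscr{B}$, require $1\notin A$). So you must check that $1\notin C_d^*$. If $1\in C_d$, then $d\in\mathscr{B}$ and $d\mid g\mid b$ with $d\le g<b$, contradicting (i). This is the only place where (i) is used, and you should say so.

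\textbf{Second fix: the equidistribution step.} The remark about multiplying by units does not by itself give equal relative densities. Multiplying by $t$ divides logarithmic density by $t$, and it maps a residue class only onto a subprogression. Replace it by the truncation argument, which works cleanly.
\begin{itemize}
\item[--] Since $\gcd(\lcm(C_n^*),b')=1$, the Chinese remainder theorem gives $d(\cM_{C_n^*}\cap(s+b'\Z))=d(\cM_{C_n^*})/b'$ for \emph{every} $s$. So the smooth-part upgrade is unnecessary.
\item[--] Davenport--Erd\H{o}s gives $\overline{\delta}(\cM_{C^*}\setminus\cM_{C_n^*})=\delta(\cM_{C^*})-d(\cM_{C_n^*})\to 0$.
\item[--] Hence $1/b'\le d(\cM_{C_n^*})/b'+\overline{\delta}(\cM_{C^*}\setminus\cM_{C_n^*})$, and letting $n\to\infty$ yields $\delta(\cM_{C^*})=1$.
\end{itemize}

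\textbf{What each route buys.} Yours uses no information about primes at all, only CRT, Davenport--Erd\H{o}s and Behrend's inequality. If you accept Dirichlet's theorem in its quantitative form $\sum_{p\in r'+b'\Z}1/p=\infty$, in the spirit of Corollary~\ref{Cor_Dirichlet}, your middle and last steps collapse:
\begin{itemize}
\item[--] Since $1\notin C^*$, every prime $p\in r'+b'\Z$ lies in $\cM_{C^*}$, hence has a divisor $c\in C^*$ with $c\neq 1$, so $p\in C^*$.
\item[--] By pigeonhole over the finitely many $d\mid g$, some $C_d^*$ contains a set $Q$ of such primes with $\sum_{p\in Q}1/p=\infty$.
\item[--] Such a $Q$ is Behrend: $\cf_Q\subseteq\cf_{Q'}$, and $\cf_{Q'}$ has density $\prod_{p\in Q'}(1-1/p)$ for every finite $Q'\subseteq Q$.
\item[--] Then $dQ\subseteq\mathscr{B}$ contradicts tautness.
\end{itemize}
This shortcut needs no limiting argument and no Behrend inequality.
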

\begin{corollary}
    Let $\mathscr{B}$ be taut. For any $b\in\mathscr{B}$ there exists a $b$-full block in $\eta_\mathscr{B}$.
\end{corollary}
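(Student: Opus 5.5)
The statement is the corollary: if $\mathscr{B}$ is taut, then for every $b\in\mathscr{B}$ some finite block of $\eta_{\mathscr{B}}$ is $b$-full. The plan is a compactness-free finiteness argument built directly on Theorem \ref{eta_full}.

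Fix $b\in\mathscr{B}$. By Theorem \ref{eta_full}, the set $R:=\operatorname{supp}\eta_{\mathscr{B}} \bmod b$ has exactly $b-1$ elements. For each residue $\rho\in R$ we choose one witness $n_\rho\in\operatorname{supp}\eta_{\mathscr{B}}$ with $n_\rho\equiv\rho \bmod b$. This gives finitely many integers.

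Next, let $m:=\min_{\rho\in R} n_\rho$ and $M:=\max_{\rho\in R} n_\rho$, and take the block $C:=\eta_{\mathscr{B}}[m,M]$. If $b=2$, so that $R$ is a singleton, we enlarge the interval by one position so that it is a genuine block in the sense of the paper, i.e.\ $n<m$ in $x[n,m]$.

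We then check $b$-fullness. The positions of $\operatorname{supp} C$ inside $\eta_{\mathscr{B}}$ include every $n_\rho$. Hence the residues of the support, read with their actual positions, cover all of $R$, which has $b-1$ elements. Conversely, the support of $C$ is contained in $\operatorname{supp}\eta_{\mathscr{B}}$, so it cannot meet more than $b-1$ residues. It follows that $|\operatorname{supp} C \bmod b|=b-1$.

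The only delicate step is an indexing convention. The definition of $b$-full counts residues of $\operatorname{supp} C$, and if $C$ is reindexed to start at $1$, the residues are shifted by a constant. A constant shift does not change the number of distinct residues, so $b$-fullness is independent of where the block is placed. I will state this explicitly, after which the corollary follows at once.
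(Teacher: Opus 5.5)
Your proposal is correct and is essentially the argument the paper intends: the paper gives no separate proof and treats the corollary as immediate from Theorem \ref{eta_full}, i.e.\ pick one witness in $\operatorname{supp}\eta_{\mathscr{B}}$ for each of the $b-1$ residue classes and take a block of $\eta_{\mathscr{B}}$ spanning them. Your handling of the case $b=2$ and of the indexing convention (positions read as in Remark \ref{lem:b-full}, with shift-invariance of the residue count) is appropriate.
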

\begin{remark}\label{lem:b-full}
    Notice that for any $n<m$ we have $\supp\eta_{\mathscr{B}}[n,m]=[n,m]\cap\cf_\sB$. So
    \[\eta_{\mathscr{B}}[n,m]\text{ is }b\text{-full iff } \left(\forall_{k\in\Z} \ \cf_\sB\cap[n,m]\cap(b\Z+k)=\emptyset \implies b\mid k\right).\]
\end{remark}
\begin{Th}({\cite[Proposition~E]{DKKL}})\label{quasi_gen}
    For any $\mathscr{B}\subset\N$, ${\eta_{\mathscr{B}}}$ is a quasi-generic point for the Mirsky measure $\nu_{\eta_{\mathscr{B}}}$, i.e. for some $n_1<n_2<\ldots<n_k<\ldots$ we have
    \[\lim_{k\to\infty}\frac{1}{n_k}\sum_{n\leq n_k}\delta_{\sigma^n{\eta_{\mathscr{B}}}}=\nu_{\eta_{\mathscr{B}}}.\]
\end{Th}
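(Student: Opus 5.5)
The plan is to approximate $\eta_{\mathscr{B}}$ by the periodic sequences coming from finite truncations of $\mathscr{B}$, and to choose the subsequence $(n_k)$ so that the truncation error has vanishing density along it.

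\emph{Setup.} For $K\in\N$ put $\mathscr{B}_K:=\{b\in\mathscr{B}: b\le K\}$. Then $\eta_{\mathscr{B}_K}$ is $\lcm(\mathscr{B}_K)$-periodic, so it is generic for the periodic measure $\nu_K$ on its orbit. Also $\eta_{\mathscr{B}}\preccurlyeq\eta_{\mathscr{B}_{K'}}\preccurlyeq\eta_{\mathscr{B}_K}$ for $K\le K'$. I take as the definition of the Mirsky measure $\nu_{\eta_{\mathscr{B}}}$ the weak limit of $\nu_K$. Its existence follows because, for each cylinder $[W]$, $\nu_K([W])$ converges. This in turn follows from the decreasing structure together with the identity $d(\mathcal{M}_{\mathscr{B}_{K'}}\setminus\mathcal{M}_{\mathscr{B}_K})=d(\mathcal{M}_{\mathscr{B}_{K'}})-d(\mathcal{M}_{\mathscr{B}_K})$. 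The latter quantity tends to $0$, since $d(\mathcal{M}_{\mathscr{B}_K})\nearrow\delta(\mathcal{M}_{\mathscr{B}})$ (logarithmic density).

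\emph{Choice of $(n_k)$.} By the Davenport--Erd\H{o}s theorem, $\delta(\mathcal{M}_{\mathscr{B}})=\underline{d}(\mathcal{M}_{\mathscr{B}})=\lim_K d(\mathcal{M}_{\mathscr{B}_K})$. I choose $n_k\to\infty$ along which the lower density is realized:
\[
\frac{1}{n_k}\bigl|\mathcal{M}_{\mathscr{B}}\cap[1,n_k]\bigr|\to\underline{d}(\mathcal{M}_{\mathscr{B}}).
\]

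\emph{Key estimate.} Fix a block $W$ of length $m$, and consider positions $n\le n_k$ at which $\eta_{\mathscr{B}}[n,n+m-1]\neq\eta_{\mathscr{B}_K}[n,n+m-1]$. Such a position has some $j\in[n,n+m-1]$ with $j\in\mathcal{M}_{\mathscr{B}}\setminus\mathcal{M}_{\mathscr{B}_K}$. Hence the number of such $n$ is at most $m\,|(\mathcal{M}_{\mathscr{B}}\setminus\mathcal{M}_{\mathscr{B}_K})\cap[1,n_k+m]|$. Dividing by $n_k$, the limsup is at most
\[
m\bigl(\underline{d}(\mathcal{M}_{\mathscr{B}})-d(\mathcal{M}_{\mathscr{B}_K})\bigr)=:m\,\varepsilon_K .
\]
Here I use $\mathcal{M}_{\mathscr{B}_K}\subseteq\mathcal{M}_{\mathscr{B}}$, the genericity of the periodic set $\mathcal{M}_{\mathscr{B}_K}$, and the choice of $n_k$. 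Consequently
\[
\limsup_k\Bigl|\frac1{n_k}\sum_{n\le n_k}\raz_{[W]}(\sigma^n\eta_{\mathscr{B}})-\nu_K([W])\Bigr|\le m\,\varepsilon_K .
\]
Letting $K\to\infty$, we have $\varepsilon_K\to0$ and $\nu_K([W])\to\nu_{\eta_{\mathscr{B}}}([W])$. Since cylinders determine weak convergence, this proves the claim.

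\emph{Main obstacle.} The delicate step is the density input: that $d(\mathcal{M}_{\mathscr{B}_K})$ converges to the \emph{lower} density of $\mathcal{M}_{\mathscr{B}}$. This is exactly the Davenport--Erd\H{o}s theorem, which I would cite rather than reprove. Without it, the set $\mathcal{M}_{\mathscr{B}}$ need not have a density, and the subsequence $(n_k)$ would not control the error.
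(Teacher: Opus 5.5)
Your proof is correct, and it is essentially the argument behind the cited result: you approximate $\eta_{\mathscr{B}}$ by the periodic points $\eta_{\mathscr{B}_K}$, average along a sequence realizing $\underline{d}(\mathcal{M}_{\mathscr{B}})$, and kill the error with the Davenport--Erd\H{o}s identity $\underline{d}(\mathcal{M}_{\mathscr{B}})=\lim_K d(\mathcal{M}_{\mathscr{B}_K})$. The paper does not prove this statement; it cites \cite{DKKL} and notes that quasi-genericity holds along any sequence realizing the lower density, which is exactly your choice of $(n_k)$. The one point you leave implicit is that your definition $\nu_{\eta_{\mathscr{B}}}=\lim_K\nu_K$ agrees with the usual one, namely the image of Haar measure on the odometer $\overline{\{(n \bmod b)_{b\in\mathscr{B}}:n\in\Z\}}$; this is a routine cylinder-by-cylinder check using the same estimate.
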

\begin{Th}({\cite[Theorem 2]{MR3920387},\cite[Corollary 1.2]{MR4289651}})\label{taut_support}
   Let $\mathscr{B}\subset\N$. The set $\mathscr{B}$ is taut if and only if the subshift $X_{\eta_{\mathscr{B}}}$ is the topological support of the Mirsky measure, i.e. the measure $\nu_{\eta_{\mathscr{B}}}$ is positive for every open subset of $X_{\eta_{\mathscr{B}}}$.
\end{Th}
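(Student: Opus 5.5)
The statement is quoted from \cite{MR3920387,MR4289651}, so what follows is how I would reconstruct it, not a new argument. Throughout I use the standard description of the Mirsky measure. Let $G=\varprojlim \Z/\lcm(S)\Z$ over finite $S\subseteq\mathscr{B}$ be the odometer, with Haar measure $m_G$. Let $\varphi\colon G\to\{0,1\}^{\Z}$ be given by $\varphi(g)[k]=1$ iff $g+k\not\equiv 0 \bmod b$ for every $b\in\mathscr{B}$. Then $\nu_{\eta_{\mathscr{B}}}=\varphi_*m_G$.

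\textbf{Easy inclusion.} By Theorem~\ref{quasi_gen}, $\nu_{\eta_{\mathscr{B}}}$ is a weak$^*$ limit of empirical measures along the orbit of $\eta_{\mathscr{B}}$. Hence $\supp\nu_{\eta_{\mathscr{B}}}\subseteq X_{\eta_{\mathscr{B}}}$ always holds. The theorem therefore reduces to showing that every block $W=\eta_{\mathscr{B}}[n,n+p]$ has positive measure iff $\mathscr{B}$ is taut.

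\textbf{Taut $\Rightarrow$ full support.}
\begin{enumerate}
\item For each position $k$ with $W[k]=0$, choose $b_k\in\mathscr{B}$ with $b_k\mid n+k$, and put $L=\lcm(b_k)$.
\item The cylinder $\{g\equiv n \bmod L\}$ has Haar measure $1/L$. On it, $\varphi(g)$ automatically has zeros at the required places.
\item It remains to show that, inside this cylinder, the set of $g$ with $g+k$ free at every position $k$ where $W[k]=1$ has positive relative measure.
\item Equivalently, the set $T=\{t\in n+L\Z : t+k\in\cf_{\mathscr{B}} \text{ for all } k\in\supp W\}$ has positive logarithmic density. Note that $t=n$ lies in $T$.
\item To prove this, write $t=n+Ls$. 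Freeness of $n+k+Ls$ becomes a $\mathscr{B}_k$-freeness condition on $s$, where $\mathscr{B}_k=\{b/\gcd(b,L): b\in\mathscr{B},\ \gcd(b,L)\mid n+k\}$, suitably shifted.
\end{enumerate}
I expect item 5 to be the main obstacle: proving that the intersection of these finitely many shifted free sets has positive density. First I would show that tautness passes to the sets $\mathscr{B}_k$ (Hall's criterion: a set is taut iff it contains no scaled copy $c\mathscr{A}$ of a Behrend set). Then I would use Davenport--Erdős to express the joint density as a limit over finite truncations. A Behrend-type argument then shows this limit stays positive: if it were zero, some scaled Behrend set would have to sit inside one of the $\mathscr{B}_k$.

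\textbf{Non-taut $\Rightarrow$ not full support.}
\begin{enumerate}
\item Non-tautness gives $c\in\N$ and a Behrend set $\mathscr{A}$ with $c\mathscr{A}\subseteq\mathscr{B}$. Taking $c$ minimal, one gets $c\in\cf_{\mathscr{B}}$.
\item Choose a long window of $\eta_{\mathscr{B}}$ around $c$ whose zeros force any $g$ in the corresponding cylinder to satisfy $g\equiv c$ modulo a large modulus $Q$. Such zeros exist by Proposition~\ref{pro1}-type fullness: the zeros near $c$ determine residues modulo finitely many elements of $\mathscr{B}$.
\item Then $\nu$ of the cylinder of this window is at most the density of $\{t\equiv c \bmod Q : t\in\cf_{\mathscr{B}}\}$.
\item That density is at most the density of $c$ times the $c\mathscr{A}$-free multiples of $c$, up to the factor coming from the window. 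Since $\mathscr{A}$ is Behrend, $\mathscr{A}$-free numbers have density zero.
\item So this cylinder, which meets $X_{\eta_{\mathscr{B}}}$, has $\nu$-measure zero, and full support fails.
\end{enumerate}
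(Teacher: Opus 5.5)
The paper gives no proof of this statement; it quotes it from the cited works. Your proposal therefore has to stand on its own.

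\textbf{Direction taut $\Rightarrow$ full support.} The easy inclusion and the reduction to cylinders $[W]$ of blocks of $\eta_{\mathscr{B}}$ are fine. Step~4 is also fine in the direction you need: positive upper density of $T$ gives positive Haar measure, since $T$ lies inside its periodic approximations. The genuine gap is Step~5, which is the whole content of this direction and is only asserted.
\begin{itemize}
\item Even granting that each $\mathscr{B}_k$ is taut, that controls each condition $n+k+Ls\in\cf_{\mathscr{B}}$ only separately.
\item These single conditions are not translates of $\cf_{\mathscr{B}_k}$. They are affine preimages $s\mapsto (L/d)s+(n+k)/d$ of sets of multiples, one for each $d\mid\gcd(L,n+k)$.
\item Nothing in your sketch explains why a vanishing \emph{joint} density would place a scaled Behrend set inside a single $\mathscr{B}_k$. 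The joint condition removes several residue classes per modulus, and freeness of different shifts has no Behrend/Heilbronn--Rohrbach positive correlation. For $\mathscr{B}=\{2\}$, ``$t$ free'' and ``$t+1$ free'' each have density $1/2$ and are disjoint.
\end{itemize}
The missing idea is to localize at the known solution $t=n$, using \emph{growing} moduli.
\begin{itemize}
\item Let $V=\{g\in G: g\not\equiv 0\bmod b\ \text{for all}\ b\in\mathscr{B}\}$ be the window. Take $S\supseteq\{b_k\}$ finite and $Q=\lcm(S)$.
\item On $\{g\equiv n\bmod Q\}$ the zeros of $W$ are automatic, and elements of $S$ cannot destroy the ones.
\item A union bound then gives $\nu_{\eta_{\mathscr{B}}}([W])\ge Q^{-1}\bigl(1-\sum_{k\in\supp W}R_S(n+k)\bigr)$. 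Here $R_S(a)$ is the relative Haar measure of $G\setminus V$ inside $\{g\equiv a\bmod Q\}$.
\end{itemize}
So what you must prove is: for taut $\mathscr{B}$ and $a\in\cf_{\mathscr{B}}$, $R_S(a)\to0$ as $S\uparrow\mathscr{B}$. A route:
\begin{itemize}
\item Only $b=dm\notin S$ with $d=\gcd(b,Q)\mid a$ contribute. Since $\gcd(m,Q/d)=1$, Davenport--Erd\H{o}s gives $R_S(a)\le\sum_{d\mid a}\delta(\mathcal{M}_{A_d})$ with $A_d\subseteq\{m: dm\in\mathscr{B}\}$.
\item For $S$ large, every element of $A_d$ has all prime factors $>N$.
\item Suppose these $N$-rough parts had sets of multiples of logarithmic density $\ge\varepsilon$ for every $N$. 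Finite pieces supported on disjoint prime ranges are independent, so their union would be a Behrend subset of $\{m:dm\in\mathscr{B}\}$. This contradicts tautness by Hall's criterion.
\end{itemize}
That is the Behrend-type argument actually needed. It works point by point, not through tautness of the $\mathscr{B}_k$.

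\textbf{Direction non-taut $\Rightarrow$ not full support.} Step~2 is unjustified as written.
\begin{itemize}
\item A zero at position $j$ only says that $g+j$ is divisible by \emph{some} element of $\mathscr{B}$. Zeros therefore pin down no residues.
\item The fullness you invoke (Proposition~\ref{pro1}, Theorem~\ref{eta_full}) is available for finite sets or taut $\mathscr{B}$, i.e.\ exactly not here. For $\mathscr{B}=\{2p: p\ \text{odd prime}\}$, $\cf_{\mathscr{B}}$ consists of the odd numbers together with $\pm2^k$. It meets only $10$ of the $30$ nonzero even classes mod $62$.
\item Also, $c\in\cf_{\mathscr{B}}$ follows from primitivity of $\mathscr{B}$, not from minimality of $c$.
\end{itemize}
Minimality of $c$ is what you should use instead, and it avoids forcing $g$ modulo a large $Q$.
\begin{itemize}
\item Suppose $r+c\Z\subseteq\mathcal{M}_{\mathscr{B}}$ with $c\nmid r$, and set $g=\gcd(r,c)$. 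Dirichlet's theorem applied to the numbers $gp$, plus partition regularity of Behrend sets, gives $g'\cdot A'\subseteq\mathscr{B}$ with $A'$ Behrend and $g'\mid g<c$. This contradicts minimality.
\item Hence $\cf_{\mathscr{B}}$ meets every class mod $c$. So some block $W$ of $\eta_{\mathscr{B}}$ has ones in every class mod $c$.
\item For every $g$ in the preimage of $[W]$, some $g+j$ with $W[j]=1$ lies in $\{h\in V: c\mid h\}$. That set is Haar-null because $A$ is Behrend.
\item Thus $\nu_{\eta_{\mathscr{B}}}([W])=0$, while $[W]$ meets $X_{\eta_{\mathscr{B}}}$.
\end{itemize}
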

\begin{remark}
In more details, by Theorem 4.1 in \cite{DKKL}, ${\eta_{\mathscr{B}}}$ is quasi-generic
along any sequence $(\ell_i)$ realizing the lower density of $\mathcal{M}_{\mathscr{B}}$, i.e. $\lim_{i\to\infty} \frac{1}{\ell_i}|\mathcal{M}_{\mathscr{B}}\cap
[1, \ell_i]| = \liminf_{L\to\infty} \frac{1}{L}|\mathcal{M}_\mathscr{B}\cap[1, L]| =: \underline{d}(\mathcal{M}_\mathscr{B}$).
\end{remark}
\begin{corollary}\label{infinitely_many}
    If $\mathscr{B}$ is taut then any block in $\eta_{\mathscr{B}}$ appears infinitely many times in it (in both directions).
\end{corollary}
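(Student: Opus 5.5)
The plan is to combine Theorem~\ref{quasi_gen} with Theorem~\ref{taut_support}. Fix a block $W$ appearing in $\eta_{\mathscr{B}}$, say $W=\eta_{\mathscr{B}}[p,p+|W|-1]$. Consider the cylinder
\[
U=\{x\in X_{\eta_{\mathscr{B}}}\colon x[0,|W|-1]=W\},
\]
which is nonempty and open in $X_{\eta_{\mathscr{B}}}$, since $\sigma^p\eta_{\mathscr{B}}\in U$. By Theorem~\ref{taut_support}, tautness of $\mathscr{B}$ gives $\nu_{\eta_{\mathscr{B}}}(U)>0$. The set $U$ is clopen, so $\raz_U$ is continuous. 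Theorem~\ref{quasi_gen} then yields a sequence $n_k\to\infty$ with
\[
\frac{1}{n_k}\,\bigl|\{1\le n\le n_k\colon \eta_{\mathscr{B}}[n,n+|W|-1]=W\}\bigr|\;\longrightarrow\;\nu_{\eta_{\mathscr{B}}}(U)>0.
\]
Hence $W$ occurs at infinitely many positive positions $n$; otherwise the counts would stay bounded and the averages would tend to $0$.

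For the negative direction, I would use the symmetry $\cf_{\mathscr{B}}=-\cf_{\mathscr{B}}$, so that $\eta_{\mathscr{B}}[-n]=\eta_{\mathscr{B}}[n]$, i.e.\ $\Delta\eta_{\mathscr{B}}=\eta_{\mathscr{B}}$. Under this symmetry, occurrences of $W$ at negative positions correspond to occurrences of the reversed block $\overline{W}$ at positive positions. The block $\overline{W}$ also appears in $\eta_{\mathscr{B}}$, namely at position $-(p+|W|-1)$. Applying the first step to $\overline{W}$ gives infinitely many occurrences of $\overline{W}$ at positive positions, and therefore infinitely many occurrences of $W$ at negative positions.

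The main subtlety is making sure the cylinder $U$ really has positive measure. Only the support statement in Theorem~\ref{taut_support} requires tautness, and the argument needs $U$ nonempty in $X_{\eta_{\mathscr{B}}}$, which holds because $W$ is taken from $\eta_{\mathscr{B}}$ itself. One more check is that the quasi-genericity in Theorem~\ref{quasi_gen} is stated with averages over positive times $n\le n_k$. That is why the reflection trick is used for the other side, rather than a second, two-sided quasi-genericity statement.
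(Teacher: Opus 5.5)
Your proposal is correct and follows the paper's proof: tautness gives the cylinder positive Mirsky measure (Theorem~\ref{taut_support}), quasi-genericity (Theorem~\ref{quasi_gen}) turns that into infinitely many occurrences, and the symmetry of $\eta_{\mathscr{B}}$ handles the other direction. You also spell out details the paper leaves implicit: the continuity of $\raz_U$, why positive frequency forces infinitely many occurrences, and the passage to the reversed block $\overline{W}$.
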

\begin{proof}
    By Theorem~\ref{taut_support}, any cylinder given by a block in ${\eta_{\mathscr{B}}}$ has positive Mirsky measure. By Theorem~\ref{quasi_gen}, any such block appears infinitely often in ${\eta_{\mathscr{B}}}$. Since ${\eta_{\mathscr{B}}}$ is symmetric, any block appears infinitely many times in both directions.
\end{proof}
\begin{proposition}\label{taut_synch}
    Let $\mathscr{B}\subset\N$ be infinite taut. Then there is no synchronizing block in $\mathcal{L}(Y)$ for any subshift $Y$ such that $X_{\eta_{\mathscr{B}}}\subseteq Y\subseteq\widetilde{X_{\eta_{\mathscr{B}}}}$.
\end{proposition}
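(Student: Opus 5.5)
The plan is to show that every block $W\in\mathcal{L}(Y)$ fails to be synchronizing. Concretely, I will construct $C,C'\in\mathcal{L}(X_{\eta_{\mathscr{B}}})\subseteq\mathcal{L}(Y)$ such that $CW,WC'\in\mathcal{L}(Y)$ but $CWC'$ is not coordinatewise dominated by any block of $\eta_{\mathscr{B}}$. By Remark~\ref{her_clos_subord}, $\mathcal{L}(Y)\subseteq\mathcal{L}(\widetilde{X_{\eta_{\mathscr{B}}}})$ consists exactly of blocks lying below some block of $\eta_{\mathscr{B}}$, so this rules out $CWC'\in\mathcal{L}(Y)$.

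Fix $W\in\mathcal{L}(Y)$ and choose a block $U$ of $\eta_{\mathscr{B}}$ with $W\preccurlyeq U$. The basic rigidity tool is the following. Let $b\in\mathscr{B}$ and let $P$ be a $b$-full block occurring in $\eta_{\mathscr{B}}$; it exists by the corollary of Theorem~\ref{eta_full}. Suppose $P\preccurlyeq\eta_{\mathscr{B}}[t+1,t+|P|]$. The support of $P$ misses exactly one residue class mod $b$, while $\supp \eta_{\mathscr{B}}$ misses $b\Z$. Hence, by Remark~\ref{lem:b-full}, $t$ is determined mod $b$: it equals $t_0$, the residue of one fixed occurrence of $P$.

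Consequently, if a block contains two copies of $P$ at distance $d$ and is dominated by a block of $\eta_{\mathscr{B}}$, then $d\equiv0 \bmod b$. The construction therefore aims for $CWC'$ to contain two copies of $P$, one inside $C$ and one inside $C'$, at a distance $\not\equiv 0\bmod b$.

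The key step is to find two occurrences of $U$ in $\eta_{\mathscr{B}}$, at positions $n_1$ and $n_2$ with $n_1\not\equiv n_2 \bmod b$, for a suitable $b\in\mathscr{B}$ with $b>|U|$. Since $\mathscr{B}$ is infinite, such large $b$ exist. This is the main obstacle. I would first try the Mirsky measure. By Theorem~\ref{taut_support} the cylinder $[U]$ has positive $\nu_{\eta_{\mathscr{B}}}$-measure, and $\nu_{\eta_{\mathscr{B}}}$ is the image of Haar measure on $\prod_{b'\in\mathscr{B}}\Z/b'\Z$ under the window coding. Because $|U|<b$, the condition imposed on the $b$-coordinate by $U$ only requires that coordinate to avoid at most $|U|$ residues. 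My aim is therefore to show that the set of window points coding $U$ has positive-measure sections for at least two distinct values of the $b$-coordinate, possibly after replacing $b$ by a suitable larger element of $\mathscr{B}$. Quasi-genericity (Theorem~\ref{quasi_gen}), applied to the refined cylinder that also records the position mod $b$, then yields occurrences of $U$ in at least two residue classes mod $b$.

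Given $n_1,n_2$, use Corollary~\ref{infinitely_many} to find an occurrence of $P$ in $\eta_{\mathscr{B}}$ to the left of $n_1$ and another to the right of $n_2+|U|$. Let $C$ be the segment of $\eta_{\mathscr{B}}$ from the left copy of $P$ up to position $n_1$, and $C'$ the segment from $n_2+|U|+1$ through the right copy of $P$. Then $CW\preccurlyeq CU$ and $WC'\preccurlyeq UC'$, and both $CU$ and $UC'$ are blocks of $\eta_{\mathscr{B}}$, so $CW,WC'\in\mathcal{L}(Y)$.

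In $CWC'$ the two copies of $P$ sit at distance $D-(n_2-n_1)$, where $D\equiv0 \bmod b$ is their true distance in $\eta_{\mathscr{B}}$ by the rigidity step. Since $n_2-n_1\not\equiv0 \bmod b$, this distance is $\not\equiv 0\bmod b$. By the rigidity step, $CWC'$ is not below any block of $\eta_{\mathscr{B}}$, so $CWC'\notin\mathcal{L}(Y)$, and $W$ is not synchronizing.
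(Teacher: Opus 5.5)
Your mechanism matches the paper's. You flank the block with segments of $\eta_{\mathscr{B}}$ containing a $b$-full block, use that such a block can lie under $\eta_{\mathscr{B}}$ only in one residue class mod $b$, and produce two occurrences whose offset is not divisible by $b$. However, the step you call the main obstacle is never carried out: the Mirsky-measure argument is only stated as an aim.

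That step is also unnecessary, because you chose the quantifiers in the wrong order. First take any two occurrences $n_1<n_2$ of $U$ in $\eta_{\mathscr{B}}$ (Corollary~\ref{infinitely_many}). Only then choose $b\in\mathscr{B}$ with $b>n_2-n_1$, which exists because $\mathscr{B}$ is infinite, so that $b\nmid n_2-n_1$. Pick the $b$-full block $P$ after that. This is exactly what the paper does, with $T=m-n$, and the requirement $b>|U|$ plays no role. With this change your rigidity step and distance computation go through.

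There is a second problem, which the paper's proof shares. The inference ``$CW\preccurlyeq CU$ and $CU$ is a block of $\eta_{\mathscr{B}}$, hence $CW\in\mathcal{L}(Y)$'' needs $\mathcal{L}(Y)$ to be closed under $\preccurlyeq$. That holds for $Y=\widetilde{X_{\eta_{\mathscr{B}}}}$. For $Y=X_{\eta_{\mathscr{B}}}$ you can take $U=W$, and then $CW=CU$. For a general intermediate $Y$ the step fails, and in fact so does the statement.

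Here is a counterexample. Let $\mathscr{B}=\{2p^2: p \text{ an odd prime}\}$. It is infinite, primitive and taut, since $\sum_{b\in\mathscr{B}}1/b<\infty$. Every odd integer is $\mathscr{B}$-free, so no point of $X_{\eta_{\mathscr{B}}}$ contains the block $00$. Put $Y=X_{\eta_{\mathscr{B}}}\cup\{\raz_{\{j\}}: j\in\Z\}\cup\{0^{\Z}\}$. Since $\raz_{\{1\}}\preccurlyeq\eta_{\mathscr{B}}$, $Y$ is a subshift with $X_{\eta_{\mathscr{B}}}\subseteq Y\subseteq\widetilde{X_{\eta_{\mathscr{B}}}}$. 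The block $100$ occurs only in the points $\raz_{\{j\}}$. Hence $C100,100C'\in\mathcal{L}(Y)$ forces $C$ and $C'$ to be all-zero blocks, and then $C100C'\in\mathcal{L}(Y)$. So $100$ is synchronizing in $Y$.

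What your argument (and the paper's) actually proves is the statement for $Y=X_{\eta_{\mathscr{B}}}$ and $Y=\widetilde{X_{\eta_{\mathscr{B}}}}$. That is all Corollary~\ref{not_sofic} needs.
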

\begin{proof}
    Suppose that $B\in\mathcal{L}(Y)$ is a synchronizing block. Since $Y\subseteq\widetilde{X_{\eta_{\mathscr{B}}}}$, by Remark~\ref{her_clos_subord}, there exists $n\in\Z$ such that
    \begin{equation}\label{block1}
    B\preccurlyeq{\eta_{\mathscr{B}}}[n+1,n+|B|].
    \end{equation} Since $\mathscr{B}$ is taut, by Corollary~\ref{infinitely_many}, any block in ${\eta_{\mathscr{B}}}$ appears infinitely often in it in both directions.
    So there exists $m\in\Z$ such that $m-n>|B|$ and
    \begin{equation}\label{block2}
    B\preccurlyeq\eta_{\mathscr{B}}[m+1,m+|B|].
    \end{equation}
    Put $T:=m-n$. Since $\mathscr{B}$ is infinite, there exists $b\in\mathscr{B}$ such that $b\nmid T$. By Theorem~\ref{eta_full} and Corollary~\ref{infinitely_many}, there exist $n',m'\in \Z$ such that $n'<n$ and $m'>m+|B|$ and ${\eta_{\mathscr{B}}}[n'+1,n]$ and ${\eta_{\mathscr{B}}}[m+|B|+1,m']$ are $b$-full. Put \[C:={\eta_{\mathscr{B}}}[n'+1,n]\text{ and }D:={\eta_{\mathscr{B}}}[m+|B|+1,m'].\] Then, by \eqref{block1}, \eqref{block2} and ${\eta_{\mathscr{B}}}\in Y$, we have \begin{equation}\label{eq:CBD}
    CB,BD\in\mathcal{L}(Y).
    \end{equation}Since $B$ is a synchronizing block, we get \begin{equation}\label{eq:CBBD}
    CBD\in\mathcal{L}(Y).
    \end{equation} Since $Y\subseteq\widetilde{X_{\eta_{\mathscr{B}}}}$, by Remark~\ref{her_clos_subord}, there exists $k\in\Z$ such that
    \begin{equation}\label{block3}
    CBD\preccurlyeq{\eta_{\mathscr{B}}}[k+1,k+|CBD|].
    \end{equation}
    Since $C,D$ are $b$-full, we obtain that ${\eta_{\mathscr{B}}}[k+1,k+|CBD|]$ is $b$-full too. Hence by Remark~\ref{lem:b-full}, we have
    \[\cf_\sB\cap [k+1,k+|CBD|]\cap(b\Z+s)=\emptyset\implies b\mid s.\]
    Since $C={\eta_{\mathscr{B}}}[n'+1,n]={\eta_{\mathscr{B}}}[k+1,k+|C|]$ and $D={\eta_{\mathscr{B}}}[m+|B|+1,m']={\eta_{\mathscr{B}}}[k+|CB|+1,k
+|CBD|]$, we have
    \begin{equation}\label{blockC}
    \cf_\sB\cap[n'+1,n]=\cf_\sB\cap[k+1,k+|C|]+n'-k\end{equation}
    and \begin{equation}\label{blockD}
    \cf_\sB\cap[m+|B|+1,m']=\cf_\sB\cap[k+|CB|+1,k+|CBD|]+m-k-|C|.\end{equation}
    Clearly, we have
    \[(\cf_\sB\cap[k+1,k+|C|]+n'-k)\cap(b\Z+n'-k)=\emptyset\]
    and
    \[(\cf_\sB\cap[k+|CB|+1,k+|CBD|]+m-k-|C|)\cap(b\Z+m-k-|C|)=\emptyset.\]
    So by \eqref{blockC} and \eqref{blockD}, we get
    \[\cf_\sB\cap[n'+1,n]\cap(b\Z+n'-k)=\emptyset\]
    and
    \[\cf_\sB\cap[m+|B|+1,m']\cap(b\Z+m-k-|C|)=\emptyset.\]
    Since $C$ and $D$ are $b$-full, by Remark~\ref{lem:b-full} we have
    \[b\mid n'-k\text{ and }b\mid m-k-|C|.\]
    Hence \[b\mid T=m-n=(m-k-|C|)-(n'-k).\] But this contradicts to the choice of $b$. So there is no synchronizing word in $\mathcal{L}(Y)$.
\end{proof}
\begin{corollary}\label{not_sofic}
For any infinite taut $\mathscr{B}$ subshifts $X_{\eta_{\mathscr{B}}}$ and $\widetilde{X_{\eta_{\mathscr{B}}}}$ are not sofic.
\end{corollary}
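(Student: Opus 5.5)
The plan is to combine Proposition~\ref{taut_synch} with the standard fact, recalled at the start of this section, that every sofic subshift admits a synchronizing block. Both $X_{\eta_{\mathscr{B}}}$ and $\widetilde{X_{\eta_{\mathscr{B}}}}$ are subshifts $Y$ with $X_{\eta_{\mathscr{B}}}\subseteq Y\subseteq\widetilde{X_{\eta_{\mathscr{B}}}}$: for $Y=X_{\eta_{\mathscr{B}}}$ the inclusion $X_{\eta_{\mathscr{B}}}\subseteq \widetilde{X_{\eta_{\mathscr{B}}}}$ holds by definition of the hereditary closure, and for $Y=\widetilde{X_{\eta_{\mathscr{B}}}}$ both inclusions are trivial. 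Since $\mathscr{B}$ is infinite and taut, Proposition~\ref{taut_synch} then shows that $\mathcal{L}(Y)$ contains no synchronizing block in either case.

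It remains to justify that sofic subshifts have synchronizing blocks. Here I would use Weiss' criterion quoted above: if $Y$ is sofic, the map $B\mapsto Foll_Y(B)$ has finite range. Among all blocks of $\mathcal{L}(Y)$ I would choose $B$ for which the follower set $Foll_Y(B)$ is minimal with respect to inclusion. Such a $B$ exists because there are only finitely many follower sets; moreover, $Y$ is nonempty, since it contains $\eta_{\mathscr{B}}$.

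For such a $B$ and any $C$ with $CB\in\mathcal{L}(Y)$, we have $Foll(CB)\subseteq Foll(B)$. Minimality then forces equality, so every $C'$ with $BC'\in\mathcal{L}(Y)$ also satisfies $CBC'\in\mathcal{L}(Y)$, and hence $B$ is synchronizing. Since this is classical, one may alternatively cite \cite{LMa} directly.

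The only mildly delicate point is the choice of a minimal follower set, which needs the finiteness of the range; that finiteness is exactly what soficity gives via \cite{Weiss}. Once the existence of a synchronizing block is in hand, the contradiction with Proposition~\ref{taut_synch} is immediate, so neither $X_{\eta_{\mathscr{B}}}$ nor $\widetilde{X_{\eta_{\mathscr{B}}}}$ is sofic.
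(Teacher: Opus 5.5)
Your proposal is correct and follows the paper's proof: both combine Proposition~\ref{taut_synch}, applied to $Y=X_{\eta_{\mathscr{B}}}$ and to $Y=\widetilde{X_{\eta_{\mathscr{B}}}}$, with the fact that every sofic subshift has a synchronizing block. The only difference is that you also prove that classical fact, via Weiss' finite-follower-set criterion and a block whose follower set is inclusion-minimal; the paper simply asserts it, and your argument for it is sound.
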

\begin{proof}
Any sofic subshift has a synchronizing block. But by Proposition~\ref{taut_synch}, neither $X_{\eta_{\mathscr{B}}}$ nor $\widetilde{X_{\eta_{\mathscr{B}}}}$ have such a block. The assertion follows.
\end{proof}
\begin{example}
Let $\mathscr{B}=\mathbb{P}$. Then $\mathscr{B}$ is not taut. Moreover, we have \[X_{\eta_{\mathscr{B}}}=\{\sigma^n{\eta_{\mathscr{B}}}: \ n\in\Z\}\cup\{\ldots0.000\ldots\}\]
and \[\widetilde{X_{\eta_{\mathscr{B}}}}=\{\sigma^n{\eta_{\mathscr{B}}}: \ n\in\Z\}\cup\{\sigma^n(\ldots00.100\ldots): \ n\in\Z\}\cup\{\ldots0.000\ldots\}.\]
Let $F\colon\widetilde{X_{\eta_{\mathscr{B}}}}\to\widetilde{X_{\eta_{\mathscr{B}}}}$ be a sliding block code given by the coding $f\colon\{0,1\}^{[-3,3]}\to\{0,1\}$:
\[f(B)=\begin{cases}
&1-B(0)  \text{ for } B\in\{0001000,0010000,0000100,0101000,0001010,0010100\}\\
&B(0) \text{ for } B\not\in\{0001000,0010000,0000100,0101000,0001010,0010100\}.
\end{cases}
\]
Then \[F^2=id.\]
Notice that $F$ cannot be extended to the automorphism of $\mathbb{A}_\mathscr{B}$. Indeed, suppose that $F$ can be extended to the automorphism of $\mathbb{A}_\mathscr{B}$. Consider $x=\ldots00.10000000001000\ldots\in\mathbb{A}_{\mathscr{B}}$. We have
\[F(x)=\ldots001.01000000010100\ldots.\]
But $\operatorname{supp}(F(x))=\{-1,1,9,11\}$ is not $3$-admissible. So $F(x)\not\in\mathbb{A}_{\mathscr{B}}$. The assertion follows.
\end{example}

\begin{Prop}\label{lem:whensofic}
The admissible shift $\mathbb{A}_{\mathscr{B}}$ is sofic if and only if the set $\mathscr{B}\subseteq \N\setminus\{1\}$ is finite.
\end{Prop}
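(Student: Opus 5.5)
If $\mathscr{B}$ is finite, I will show that $\A_{\mathscr{B}}$ is a shift of finite type, hence sofic. Put $L=\lcm(\mathscr{B})$. A set $A\subseteq\Z$ fails to be $b$-admissible exactly when it meets every residue class modulo $b$. If that happens, we can pick one representative of each class inside $A$; any $b$ consecutive integers also contain one element of each class, so translating representatives by multiples of $b$ does not help directly. The right way to see finiteness is different. Each residue class $r+b\Z$ is met by $A$ iff it is met by $A\cap(r+b\Z)$. Call a finite block $w$ of length $b$ \emph{full} if its support meets all residues mod $b$. This is not a local condition, so instead I will use Weiss' criterion directly.

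To each $B\in\mathcal L(\A_{\mathscr{B}})$ placed at positions $[1,|B|]$, associate the tuple $(\supp B \bmod b)_{b\in\mathscr{B}}$, a tuple of proper subsets of $\Z/b\Z$. Then $C\in Foll(B)$ iff for every $b\in\mathscr{B}$ the set $(\supp B\bmod b)\cup((|B|+\supp C)\bmod b)$ is proper. This depends only on the tuple and on $|B|\bmod L$. Hence the follower sets take finitely many values, and $\A_{\mathscr{B}}$ is sofic by Weiss' theorem. One should double-check that $BC\in\mathcal L$ iff $\supp(BC)$ is $\mathscr{B}$-admissible; this holds because a finite admissible set extended by zeros gives a point of $\A_{\mathscr{B}}$.

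For the converse, assume $\mathscr{B}$ is infinite and primitive. I will show that the follower sets take infinitely many values, which by Weiss' theorem rules out soficity. For $b\in\mathscr{B}$, take $B_b=1^{b-1}$, i.e.\ the support is $[1,b-1]$. This is admissible for every $b'\in\mathscr{B}$ with $b'\ge b$. For $b'<b$, however, $b'$ consecutive ones fill all residues, so I must instead choose a block whose support is $b$-almost-full yet $\mathscr{B}$-admissible. The cleaner choice is $B_b=\raz_{\{1,2,\dots,b-1\}\cdot m}$ restricted to a window, with spacing $m=\lcm$ of the elements of $\mathscr{B}$ smaller than $b$, arranged so that the residues mod $b$ cover all classes except one. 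This is arranged when $\gcd(m,b)$ is small, which primitivity does not guarantee, so this step needs care.

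A more robust route mirrors the proof of Lemma~\ref{lem:repeat} and Corollary~\ref{cor:witness}. Start from $A=\{0\}$ and apply Corollary~\ref{cor:witness} repeatedly, or directly use the sequence $\ell_i$. Build a finite $\mathscr{B}$-admissible set $A_b$ whose residues mod $b$ are exactly $(\Z/b\Z)\setminus\{r_b\}$, with the single missing residue determined. Then a follower $C=1$ placed at a position $\equiv r_b \pmod b$ extends $B_b$ differently from positions not $\equiv r_b$. Distinguishing infinitely many $b$ gives infinitely many follower sets: for $b\neq b'$ with $b$ larger, one exhibits a one-point follower that is allowed after $B_{b'}$ but not after $B_b$.

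The main obstacle is constructing $A_b$, a $\mathscr{B}$-admissible set that is nearly full modulo $b$. I would try Corollary~\ref{cor:witness} with $b_0=b$ applied to the set $\{1,\dots,b-1\}$ shifted out via multiples of $\lcm(\mathscr{B}\cap[1,b-1])$. Concretely, take $a_j=j\cdot N_b+j$ for $j=1,\dots,b-1$, where $N_b$ is divisible by $b\cdot\lcm(\mathscr{B}\cap[1,(b-1)])$. Then $a_j\equiv j\pmod b$. Modulo smaller $b'$ these are all $\equiv j$, and the set $\{1,\dots,b-1\}$ may fill residues mod $b'$. This fails, so I would instead take $a_j\equiv j \pmod b$ and $a_j\equiv 1\pmod{b'}$ for each $b'<b$ in $\mathscr{B}$ with $\gcd(b,b')\mid j-1$, which is impossible in general. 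The fallback is to use admissibility of $\Z\setminus b\Z$-type sets only up to what the CRT allows: I must accept a missing set of residues mod $b$ forced by smaller elements. It then suffices that the number of missing residues, or their pattern, yields pairwise distinct follower sets for infinitely many $b$, which I expect to follow from the fact that $b$ is not determined by the smaller elements (primitivity).
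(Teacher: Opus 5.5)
\textbf{The ``if'' direction is fine.} Drop your opening claim that $\A_{\mathscr{B}}$ is of finite type, because it is false: for $\mathscr{B}=\{2\}$ every word $10^{2k}1$ is forbidden while $10^{2k+1}1$ is allowed. You do not rely on that claim, though. Your Weiss argument via $(\supp B\bmod b)_{b\in\mathscr{B}}$ together with $|B|\bmod\lcm(\mathscr{B})$ is correct, since $\A_{\mathscr{B}}$ is hereditary and a block lies in its language iff its support is $\mathscr{B}$-admissible.

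\textbf{The ``only if'' direction is not a proof.} Everything depends on exhibiting infinitely many blocks with pairwise distinct follower (or predecessor) sets, and you never construct them. The block $1^{b-1}$ and the $\lcm$-spacing idea fail. The CRT requirement $a_j\equiv j\pmod b$, $a_j\equiv 1\pmod{b'}$ is, as you note, generally unsolvable. Your final sentence is only an expectation.

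Even if you had $\mathscr{B}$-admissible sets $A_b$ missing exactly one class $r_b$ mod $b$, your distinguishing step would still be unverified. The one-point follower at $p\equiv r_b\pmod b$ must keep $A_{b'}\cup\{p\}$ admissible modulo \emph{every} element of $\mathscr{B}$. You have no control over the residues of $A_{b'}$ modulo elements other than $b'$.

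\textbf{The missing idea: avoid one class rather than prescribe residues.} You should not prescribe residues modulo the smaller elements; you only need to avoid one class modulo each of them.

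Write $\mathscr{B}=\{b_1<b_2<\ldots\}$. Corollary~\ref{Cor_Dirichlet}, applied to the primitive set $\{b_1,\ldots,b_k\}$, gives $(s-1)+b_k\Z\not\subseteq\bigcup_{j<k}b_j\Z$. Hence every class $s$ mod $b_k$ has a negative representative $c_s\notin\bigcup_{j<k}(1+b_j\Z)$. Any set of such $c_s$ is then automatically $b_j$-admissible for all $j<k$.

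\textbf{How the paper uses this.} It pairs these $c_s$ with the sets $A^M_n=\{\lcm(b_1,\ldots,b_m):n\le m\le M\}$, identified with blocks of common length $\lcm(b_1,\ldots,b_M)$. These sets lie in the class $0$ modulo $b_j$ for $j\le n$, and have at most $m-n+1$ residues mod $b_m$.

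Choose $k_1<k_2<\ldots$ with $b_{k_i}\nmid\lcm(b_1,\ldots,b_{k_i-1})$. Such indices exist, since otherwise all $b_k$ would divide one fixed $\lcm(b_1,\ldots,b_K)$. Let $C$ consist of the $c_s$ (with $s=b_{k_i}$ in the role of $b_k$) for all $s\not\equiv 0$ except the class of $\lcm(b_1,\ldots,b_{k_i-1})$ mod $b_{k_i}$. Then $C\cup A^M_{k_i}$ has the following properties:
\begin{itemize}
\item it misses $1$ mod $b_j$ for $j<k_i$;
\item it misses that excluded class mod $b_{k_i}$;
\item it has at most $b_{k_i}+m-k_i-1<b_m$ residues mod $b_m$ for $m>k_i$.
\end{itemize}
By contrast, $C\cup A^M_{k_{i-1}}$ is full mod $b_{k_i}$.

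Since $A^M_{k_i}\subseteq A^M_{k_{i-1}}$, this gives a strictly increasing chain of predecessor sets of any prescribed length (take $M>k_N$). That is exactly what Weiss' criterion requires.
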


\begin{proof}
The "if" part is easy. For the proof of the "only if" part  assume that the set $\mathscr{B}$ is infinite and let us enumerate the elements of the set $\mathscr{B}$:
$$
b_1<b_2<\ldots<b_n<\ldots.
$$
Observe that $b_k>k$ for every $k\in\N$ and $b_k+l-k\le b_l$ whenever $k\le l$. For $M\in\N\cup\{\infty\}$ and $n\in \N$, $n\le M$, let
$$
A_n^M=\{\lcm(b_1,\ldots,b_{m}):n\le m\le M\}.
$$
Clearly, $A_n^M\subseteq A_{n'}^{M'}$ whenever $n\ge n'$ and $M\le M'$. Every set $A_n^M$ is $\mathscr{B}$-admissible: it is enough to observe that $A_1^{\infty}$ is $\mathscr{B}$-admissible, {as
$$
A_1^{\infty}\mod b_k=\{b_1,\lcm(b_1,b_2),\ldots,\lcm(b_1,b_2,\ldots, b_{k-1}),0\}\mod b_k,
$$
so}
$|A_1^{\infty}\mod b_k|\le k <b_k$ for every $k\in\N$.

In what follows, in order to avoid complicated notation, we shall identify finite sets of integers with blocks in the following way: a finite set $A$ of natural numbers is identified with the block $\raz_A[1,\max(A)]$, whereas a finite set $C$ of negative numbers is identified with the block $\raz_C[\min(C),0]$.

 We shall prove that the range of the map $A\mapsto Pred_{\mathbb{A}_{\mathscr{B}}}$ is infinite, which in view of Theorem 4 in \cite{Weiss}, yields that $\mathbb{A}_{\mathscr{B}}$  is not sofic.

Since $\mathscr{B}$ is primitive, there is an infinite sequence $k_1<k_2<\ldots $ such that $b_{k_i}\nmid\lcm(b_1,\ldots,b_{k_i-1})$ for every $i\in \N$. {Indeed, otherwise the set of the primes that divide at least one element $b\in\mathscr{B}$ is finite, which is impossible when $\mathscr{B}$ is infinite and primitive.} Fix $N\in\N$, we shall prove that the range of the map $A\mapsto Pred(A)$ has cardinality at least $N$, more precisely, we prove that the sets $Pred_{\mathbb{A}_{\mathscr{B}}}(A_{k_1}^M),\ldots, Pred_{\mathbb{A}_{\mathscr{B}}}(A_{k_N}^M)$ are pairwise distinct, where  $M>k_N$ (the sets $A_{k_i}^M$ are identified with blocks according to the rule described above).
As $A^M_{k_{i+1}}\subset A^M_{k_{i}}$ for $i=1,\ldots, N-1$,
$$
Pred_{\mathbb{A}_{\mathscr{B}}}(A_{k_1}^M)\subset\ldots \subset Pred_{\mathbb{A}_{\mathscr{B}}}(A_{k_N}^M).
$$
We shall prove that the inclusions are strict.
Fix $1\le i\le N$. For every $s\in\{1,\ldots, b_{k_i}-1\}$, by Corollary~\ref{Cor_Dirichlet}, the set
\begin{equation}\label{eq:set}
s+b_{k_i}\Z\setminus (\bigcup_{j<k_i}(1+b_{j}\Z))
\end{equation}
is nonempty. 
For $s\in\{1,\ldots, b_{k_i}-1\}$ let us choose a negative integer $c_s$ belonging to the set (\ref{eq:set}) and let
$$C=\{c_s:s\in\{1,\ldots, b_{k_i}-1\},\;s\neq \lcm(b_{k_{i-1}},\ldots,b_{k_i-1})\mod b_{k_i}\}.$$
Observe that, as $b_{k_i}\nmid\lcm(b_1,\ldots,b_{k_i-1})$, we have  $b_{k_i}\nmid\lcm(b_{k_{i-1}},\ldots,b_{k_i-1})$, so $|C|=b_{k_i}-2$.
Every element of $A^M_{k_i}$ is divisible by $b_j$ for $j\le k_i$. It follows that the set $C\cup A^M_{k_i}$ misses the residue class of $1$ modulo $b_j$ for $j<k_i$ and it  misses the residue class of $\lcm(b_{k_{i-1}},\ldots,b_{k_i-1})$ modulo $b_{k_i}$. Finally, $ |A^M_{k_i}\mod b_m|\le m-k_i+1$, so $|(C\cup A^M_{k_i})\mod b_m|\le b_{k_i}-2+m-k_i+1<b_m$ for $m>k_i$. It follows that the set $C\cup A^M_{k_i}$ is $\mathscr{B}$-admissible, so $C\in Pred_{\A_{\mathscr{B}}}(A^M_{k_i})$.

On the other hand, $A^M_{k_{i-1}}$ meets the residue classes of $0$ and of $\lcm(b_{k_{i-1}},\ldots,b_{k_i-1})$ modulo $b_{k_i}$, so $C\cup A^M_{k_{i-1}}$ meets every residue class modulo $b_{k_i}$ hence $C\cup A^M_{k_{i-1}}$ is not $\mathscr{B}$-admissible and $C\notin Pred_{\mathbb{A}_{\mathscr{B}}}(A^M_{k_{i-1}})$.
\end{proof}

 \end{document}